\newcommand{\mathsym}[1]{{}}
\newcommand{\unicode}[1]{{}}
\newtheorem{thm}{Theorem}[section]
\newtheorem{lem}[thm]{Lemma}
\newtheorem{cor}[thm]{Corollary}
\newtheorem{prop}[thm]{Proposition}
\newtheorem{defn}[thm]{Definition}
\newtheorem{rem}[thm]{Remark}
\newtheorem{exa}[thm]{Example}
\numberwithin{equation}{section}
\renewcommand{\a}{\alpha}
\renewcommand{\b}{\beta}
\newcommand{\de}{\delta}
\newcommand{\ga}{\gamma}
\newcommand{\si}{\sigma}
\newcommand{\Ga}{\Gamma}
\def\R{{\mathbb{R}}}
\def\RR{{\mathcal{R}}}
\def\Z{{\mathbb{Z}}}
\def\T{{\mathbb{T}}}
\def\D{\mathcal{D}}
\def\G{\mathcal{G}}
\def\E{\mathcal{E}}
\def\q{q_3}
\def\r{q_1}
\title{Coupled KPZ equations and their decoupleability}
\author{Boliang Fu}
\address{Beijing Institute of Mathematical Sciences and Applications, \\
No.\ 544, Hefangkou Village, Huaibei Town, Huairou District, \\
Beijing 101408, China}
\email{{\tt 495414401@qq.com}}
\author{Tadahisa Funaki}
\address{Beijing Institute of Mathematical Sciences and Applications, \\
No.\ 544, Hefangkou Village, Huaibei Town, Huairou District, \\
Beijing 101408, China}
\email{{\tt funaki@ms.u-tokyo.ac.jp}}
 \author{Sunder Sethuraman}
\address{Department of Mathematics\\
University of Arizona\\
621 N. Santa Rita Ave.\\
Tucson, AZ 85750, USA}
\email{{\tt sethuram@arizona.edu}}
 \author{Shankar Venkataramani}
\address{Department of Mathematics\\
University of Arizona\\
621 N. Santa Rita Ave.\\
Tucson, AZ 85750, USA}
\email{{\tt shankar@arizona.edu}}
\date{\today}
\begin{document}
\keywords{invariant, decoupleable, tensor, coupled KPZ, implicitization, semi-algebraic, integrity basis}

\subjclass[2020]{60H17, 15A72, 13A50, 60K35}

\begin{abstract}
We discuss characterizations of the decoupleability, partial and full, of trilinear or completely symmetric real $n\times n\times n$ tensors, which inform on the structure of certain coupled KPZ equations.  Informally, when the tensor is partially decoupleable, one of the components in the coupled KPZ equation splits off from the others, while when the tensor is fully decoupleable, each of the $n$ components splits off from the others.  

Such a characterization is recast as a problem of membership of trilinear tensors in $O(n)$ orbits of subsets of fully decoupleable and partially decoupleable tensors.  When $n=2$, we show these subsets are the same, and in this case give a single criterion in terms of the entries of a tensor for membership in the orbits of these subsets.  When $n\geq 3$, the subsets are different.  For $n\geq 3$, we characterize full decoupleability in terms of several abstract relations, which when $n=3$ are made explicit.  When $n=3$, we also explicitly characterize partial decoupleability.

The methods involve notions in applied invariant theory, relating $O(n)$ invariant subsets to stabilizer subgroup actions on smaller sets.  When $n=3$ make use of the explicit basis of invariants found by Olive and Auffray.  When $n=2$, we also supply two other more direct arguments.
\end{abstract}

\maketitle

\tableofcontents

\section{Introduction}

We consider the coupled KPZ equation for $h=(h^i(t,x))_{i=1}^n$ with
$n$-components on $\T=[0,1)$ 
with periodic boundary condition or on $\R$ written in a canonical form
\begin{equation} \label{eq:1.1}
\partial_t h^i = \tfrac12 \Delta h^i + \tfrac12 \Ga_{jk}^i
\nabla h^j \nabla h^k + \xi^i,\quad 1\le i \le n,
\end{equation}
where $\Delta = \partial^2_x$, $\nabla = \partial_x$, and
$\xi=(\xi^i(t,x))_{i=1}^n$ is an $\R^n$-valued
space-time Gaussian white noise with covariance structure
$$
E[\xi^i(t,x)\xi^j(s,y)] = \de^{ij}\de(t-s)\de(x-y),
$$
for $t, s\ge 0$ and $x, y \in \T$ or  $\R$.
We use Einstein's summation convention for the second term in the right-hand side of
\eqref{eq:1.1}.  For the coupling constant $\Ga = (\Ga_{jk}^i)$, an $n\times n\times n$ real tensor, without loss of
generality, we assume the bilinear condition
\begin{equation*}
\Ga_{jk}^i= \Ga_{kj}^i \; \text{ for all } \, i, j, k,
\end{equation*}
due to the symmetry of the term $\partial h^j \partial h^k$ in $j, k$. 

 We also
introduce a stronger condition for $\Ga$ called the trilinear condition
\begin{equation}\label{eq:T}
\Ga_{jk}^i= \Ga_{kj}^i = \Ga_{ik}^j\;\text{ for all } \, i, j, k. 
\end{equation}
A trilinear $\Gamma$ is sometimes also called `completely symmetric'.
We will define 
$$\mathcal{T}_n := \{\Ga = (\Ga_{jk}^i)_{i,j,k=1}^n: {\rm \ trilinear }\}.$$
We mention references for the general theory of tensors in various contexts include \cite{Boehler}, \cite{Cox-Little-Oshea}, \cite{Derkson}, \cite{Gnang}, \cite{Kolda}, \cite{Popov}, \cite{Qi}, \cite{Sturmfels},  \cite{Weyl}.

The coupled KPZ equation is ill-posed in a classical sense and requires 
a renormalization.  The local-in-time well-posedness is shown
under the bilinearity of $\Ga$ by applying regularity structures or paracontrolled calculus
for singular SPDEs due to Hairer \cite{Hairer} or Gubinelli-Imkeller-Perkowski \cite{GIP}, respectively.
The trilinear condition plays an important role.  Indeed, assuming \eqref{eq:T},
one can prove several results on $\T$ including
(1) the global-in-time well-posedness, more precisely, the existence and
uniqueness of solutions for all initial values in H\"older-Besov space 
$\mathcal{C}^\a=(\mathcal{B}_{\infty,\infty}^\a(\T))^n, \a<1/2$,
(2) strong Feller property (Hairer-Mattingly \cite{HM16}), (3) the unique invariant 
measure (except for a shift in $h$) is given by the periodic Wiener measure on $C(\T,\R^n)$,
(4) lack of necessity (or cancellation) of a logarithmic renormalization 
for fourth order terms and (5) the clarification of the difference of limits of two types of 
approximations originally introduced by \cite{FQ} when $n=1$.  See \cite{FH17} 
for details of these results. 

Motivated by the study of `nonlinear fluctuating hydrodynamics' in one dimensional systems with several conservation laws \cite{Schutz}, \cite{Spohn1}, the article \cite{BFS} derives a coupled KPZ-Burgers equation, formally associated to the gradient $\chi=\nabla h$ with respect to \eqref{eq:1.1}, namely
 \begin{align*}
 \partial_t \chi^i = \tfrac12 \Delta \chi^i + \tfrac12 \Ga_{jk}^i
\nabla\big(\chi^j \chi^k\big) +\nabla \xi^i,\quad 1\le i \le n,
 \end{align*} 
 from
multi-species zero-range processes for the system of fluctuation fields associated to each species.  The trilinear condition is proven for 
the coupling constant $\Gamma$ of the coupled equations obtained in this way and written in a canonical form.  This coupled system in \cite{BFS} is derived under the assumption of equal `characteristic velocities' for each of the types.  We mention, if these velocities are different, then \cite{CGMO} for $n=2$ derive a system of independent KPZ-Burgers equations for the fluctuation fields seen in each characteristic frame; see also \cite{FeGe}.  It has been shown in \cite{RDKKS} that the general coupled $n=2$ system belongs to the KPZ class in terms of the $1:2:3$ scaling.  

The purpose of this article is to understand when $\Gamma$ is partially or fully decoupleable in $n\geq 2$.  Such information would inform on the structure of the KPZ coupled system discussed earlier.
We will give a general characterization of fully decoupled tensors in $n\geq 2$, as an application of a more abstract scheme.  Then, we will concentrate on $n=2,3$ where more explicit characterizations of both fully decoupleable and partially, but not fully, decoupleable tensors are made.

We formulate the problems more carefully in Section \ref{formulation section} (see Section \ref{two and three components section} for explicit formulations when $n=2, 3$) and then in the rest of the Introduction discuss our results and methods, primarily involving applied invariant theory, in Section \ref{discussion section}.

\subsection{Formulation of the problem}
\label{formulation section}

If $\si=(\si_{ij})$ is an orthogonal $n\times n$ matrix, $\si\xi$ remains $\R^n$-valued
space-time Gaussian white noise in law. Thus, under the transformation 
$\tilde h_t :=\si h_t$, the vector $\tilde h_t$ satisfies \eqref{eq:1.1} in law,
with $\Ga$ changed to $\si\circ\Ga$ defined by
\begin{equation*}
(\si\circ\Ga)_{jk}^i := \sum_{i',j',k'}
\si_{i i'} \Ga_{j'k'}^{i'} \si_{j'j}^{-1}\si_{k'k}^{-1}.
\end{equation*}
Since $\si$ is an orthogonal matrix, i.e.\ $\si^{-1}_{j'j} = \si_{jj'}$,
we have
\begin{equation} \label{eq:1.3}
(\si\circ\Ga)_{jk}^i = \sum_{i',j',k'}
\si_{i i'} \Ga_{j'k'}^{i'} \si_{jj'}\si_{kk'}.
\end{equation}

This shows that $\si\circ\Ga$ is trilinear if $\Ga$ is trilinear and $\si$ is an
orthogonal matrix, that is $\si\in O(n)$, the orthogonal group.  In other words, the trilinearity is kept under the rotation 
and reflection.  Note that the bilinearity is kept for any regular matrix $\si$
under the transform $\si\circ\Ga$.

Recall that $SO(n)$ is the subgroup of `rotations' in $O(n)$ with determinant $1$.  In fact, $O(n)$ is the semidirect sum of $SO(n)$ and any subgroup formed with the identity and a `reflection', that is an element of $O(n)$ with determinant $-1$.

We now consider a useful map which gives an equivalent formulation: For $x = (x_1,\ldots,x_n) \in \R^n$, define
$$
f(x;\Ga) := \sum_{i,j,k=1}^n \Ga_{jk}^i x_ix_jx_k.
$$
Such a map is in $1:1$ correspondence with trilinear tensors $\{\Gamma^i_{j,k}\}_{i,j,k=1,\ldots, n}$:  Given such a homogeneous cubic polynomial $f$, we may compute $\Gamma^i_{j,k}$ as
$\Gamma^i_{j,k} = \frac{1}{3!}\partial_i \partial_j \partial_k f$.

Equivalently, we can also find the trilinear tensor $\Gamma$ via the relation
$$\Gamma(X,Y,Z)=\sum_{i,j,k=1,\ldots, n}X_iY_jZ_k\Gamma^i_{j,k} = \frac{1}{3!}X\cdot \nabla(Y\cdot\nabla(Z\cdot\nabla f)),$$
for $X,Y,Z\in \R^n$.  Here, one may interpret $\Gamma(X,Y,Z)$ as a `lifting' of $f(x;\Ga)$ as  $\Gamma(X,Y,Z)= f(x;\Ga)$ when $X=Y=Z=x$.  Note that the definition of $\Gamma(X,Y,Z)$, as it is a scalar with respect to $x$, doesn't depend on the argument $x\in \R^n$ of $f(x)$.

One may relate the action of $\sigma\in O(n)$ on $\Gamma$ to that of $\sigma$ acting on $x\in \R^n$
with respect to $f$.

\begin{lem}  \label{lem:4.1}
For $\Gamma\in \mathcal{T}_n$ and $\si\in O(n)$, we have $f(x;\si\circ\Ga) = f(\si^{-1}x;\Ga)$.
\end{lem}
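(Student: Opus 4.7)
The plan is to verify the identity by direct substitution, exploiting the orthogonality $\sigma^{-1} = \sigma^T$ to convert the three $\sigma$-factors hiding in $\sigma\circ\Gamma$ into three $\sigma^{-1}$-factors acting on $x$.

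First I would write out
\begin{equation*}
f(x;\si\circ\Ga) = \sum_{i,j,k} (\si\circ\Ga)_{jk}^i\, x_ix_jx_k
= \sum_{i,j,k,i',j',k'} \si_{ii'}\si_{jj'}\si_{kk'}\, \Ga_{j'k'}^{i'}\, x_ix_jx_k,
\end{equation*}
using the definition \eqref{eq:1.3} of $\si\circ\Ga$. The next step is to regroup the sums by holding the primed indices fixed and performing the sums over the unprimed indices, which factor into three independent pieces of the form $\sum_i \si_{ii'} x_i$.

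The key observation is that orthogonality gives $\si^{-1}_{i'i} = \si_{ii'}$, so
\begin{equation*}
\sum_i \si_{ii'} x_i = \sum_i \si^{-1}_{i'i} x_i = (\si^{-1}x)_{i'},
\end{equation*}
and analogously for the $j$- and $k$-sums. Substituting back yields
\begin{equation*}
f(x;\si\circ\Ga) = \sum_{i',j',k'} \Ga_{j'k'}^{i'}\, (\si^{-1}x)_{i'}(\si^{-1}x)_{j'}(\si^{-1}x)_{k'} = f(\si^{-1}x;\Ga),
\end{equation*}
by the definition of $f(\cdot;\Ga)$, which is the claim.

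There is essentially no obstacle here; this is a bookkeeping calculation whose only substantive ingredient is the identity $\si^{-1} = \si^T$ that characterizes $O(n)$. In fact, the cleanest conceptual framing is via the lifted trilinear form: one can alternatively note that $\Ga(X,Y,Z) = \tfrac{1}{3!} X\cdot\nabla(Y\cdot\nabla(Z\cdot\nabla f(\cdot;\Ga)))$ and then observe that \eqref{eq:1.3} is exactly the statement $(\si\circ\Ga)(X,Y,Z) = \Ga(\si^{-1}X,\si^{-1}Y,\si^{-1}Z)$, whereupon setting $X=Y=Z=x$ gives the conclusion immediately. This second route may be worth recording since the identity is really a manifestation of the standard transformation rule for covariant tensors under the orthogonal group.
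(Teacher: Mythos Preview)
Your proof is correct and follows exactly the same direct computation as the paper's own argument: expand $f(x;\si\circ\Ga)$ via \eqref{eq:1.3}, use $\si_{ii'}=(\si^{-1})_{i'i}$ to rewrite each factor $\sum_i \si_{ii'}x_i$ as $(\si^{-1}x)_{i'}$, and recognize the result as $f(\si^{-1}x;\Ga)$. The additional remark about the lifted trilinear form is a nice conceptual gloss but is not needed for the proof.
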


\begin{proof}
Noting that $\si_{ii'} = (\si^{-1})_{i'i}$, we have
\begin{align*}
f(x; \si\circ\Ga)
& = \sum_{ijk}\sum_{i'j'k'} \si_{ii'} \Ga_{j'k'}^{i'} \si_{jj'}\si_{kk'} x_ix_jx_k \\
& = \sum_{i'j'k'} \Ga_{j'k'}^{i'} (\si^{-1}x)_{i'} (\si^{-1}x)_{j'}(\si^{-1}x)_{k'} \
 = \ f(\si^{-1}x; \Ga). \qedhere
\end{align*}
\end{proof}
We comment in passing in other problems, when such a homogeneous function $f$ is introduced first, the specification of $f(\sigma^{-1}x; \Gamma)$ would then define the action $\sigma\circ \Gamma$.

We state and recall the definition of full and partial decoupleability for the coupled
KPZ equation \eqref{eq:1.1} with trilinear $\Gamma$.

\begin{defn}[cf.\ Definition 8.1 of \cite{BFS}]

{\rm (1)}
We say that the KPZ-system \eqref{eq:1.1} is fully decoupleable 
if there exists $\sigma\in O(n)$ such that for any 
$i \in \{1,\ldots, n\}$, the coupling constants $(\si\circ\Ga)_{jk}^i$ 
are zero for any $(j,k) \ne (i,i)$. \\
{\rm (2)}
We say that it is partially decoupleable if there exists
$\sigma\in O(n)$ such that there exists $i \in \{1,\ldots, n\}$ for which the coupling
constants $(\si\circ\Ga)_{jk}^i$ are zero for any $(j,k) \ne (i,i)$. 
\end{defn}

We now define the orbits in $\mathcal{T}_n$ with respect to $O(n)$. 

\begin{defn}
\label{FD PD defn}
For $\Ga_1, \Ga_2 \in \mathcal{T}_n$, we say $\Ga_1 \sim \Ga_2$ if
there exists $\si \in O(n)$ 
such that $\Ga_2 = \si\circ \Ga_1$.  Define $\{\sigma\circ \Gamma: \sigma \in O(n)\}$ as the $O(n)$-orbit of $\Gamma\in \mathcal{T}_n$.  Then, $\Ga_1$ and $\Ga_2$ are on the same $O(n)$ orbit exactly when $\Ga_1\sim \Ga_2$.
\end{defn}

It is evident that 
`$\sim$' is an equivalence relation, and one can consider the quotient space $\mathcal{T}_n/\sim$ of `orbits', or write $\mathcal{T}_n$ as a union of orbits.

In the language of the relation $\sim$, we may restate Definition \ref{FD PD defn} as the membership of $\Gamma$ in sets of fully and partially decoupled tensors, $\mathcal{T}_{n, \rm FD}$ and $\mathcal{T}_{n, \rm PD}$:
For $1 \le i \le n$ and $\beta\in \R$, let $\G^{(i,\beta)}$
be the $n\times n$ matrix such that $(\G^{(i,\beta)})_{ii} = \beta$ and $(\G^{(i,\beta)})_{jk}=0$
otherwise.  For $\beta_1,\ldots,\beta_n \in \R$, let $\Ga \equiv \Ga^{(\beta_1,\ldots,\beta_n)}
:= (\Ga^i = \G^{(i,\beta_i)})_{i=1, \ldots, n}$, where we denote $\Gamma^i = (\Gamma^i_{jk})_{1\leq j,k\leq n}$.  By definition,
\begin{align*}
& \mathcal{T}_{n, \rm FD} := \{\Ga\in \mathcal{T}_n; 
\Ga\sim \Ga^{(\beta_1,\ldots,\beta_n)} \; \text{ for some } \beta_1,\ldots,\beta_n \in \R\},\\
& \mathcal{T}_{n, \rm PD} := \{\Ga\in \mathcal{T}_n; 
\Ga\sim (\Ga^n= \G^{(n,\beta)}, \Ga^1,\ldots, \Ga^{n-1})\\
&\ \ \ \ \ \ \ \ \ \ \ \ \ \ \ \ 
\text{ for some } \beta\in \R \text{ and } n\times n \ {\rm real \ matrices \ }\Ga^1,\ldots,\Ga^{n-1} \}.
\end{align*}
We comment, in the definition of $\mathcal{T}_{n, \rm PD}$, that we have taken $i=n$ by an a priori $O(n)$ transformation.
Note also that some components of $\Ga^1,\ldots, \Ga^{n-1}$ are determined from $\Ga^n$.

Alternatively, in the language of third order homogeneous polynomials $f$, tensors $\Gamma\sim \Ga'$ exactly when there is a $\sigma\in O(n)$ such that $f(x;\Ga') = f(x; \sigma\circ\Ga) (=f(\sigma^{-1}x; \Ga))$. Moreover, one may give an equivalent definition of the class of fully decoupled and partially decoupled tensors $\Gamma$:
Namely, $\Gamma \in \mathcal{T}_{n, \rm FD}$ exactly when there is a $\sigma\in O(n)$ such that
\begin{align*}
f(\sigma^{-1}x;\Gamma) = \sum_{\ell=1}^n \Gamma^{\ell}_{\ell, \ell}\big((\sigma^{-1}x)_\ell\big)^3.\end{align*}
On the other hand, $\Gamma \in \mathcal{T}_{n, \rm PD}$ exactly when there is a $\sigma\in O(n)$ such that
\begin{align*}f(\sigma^{-1}x;\Gamma) = \Gamma^n_{n,n}\big((\sigma^{-1}x)_n\big)^3 + \sum_{i,j,k=1,\ldots, n-1}\Gamma^{i}_{j,k}(\sigma^{-1}x)_i (\sigma^{-1}x)_j(\sigma^{-1}x)_k.
\end{align*}

\subsection{Discussion of results and methods}
\label{discussion section}

We may view the problem of characterizing $\mathcal{T}_{n, \rm FD}$ or $\mathcal{T}_{n, \rm PD}$ in a larger context.  We say that a subset $\mathcal{S}\subset \mathcal{T}_n$ is `$O(n)$ invariant' if $\sigma\circ \Gamma \in \mathcal{S}$ whenever $\sigma\in O(n)$ and $\Gamma\in \mathcal{S}$.  By definition, $\mathcal{T}_{n, \rm FD}$ and $\mathcal{T}_{n, \rm PD}$ are both invariant subsets, being equal to $\cup_{R\in \mathcal{R}}\big\{O(n) \ {\rm orbit \ of \ }R\big\}$ for subsets $\mathcal{R}$ consisting of say tensors in reduced fully decoupleable or partially decoupleable forms.
In this sense, our problem is a type of membership problem to determine which tensors $\Gamma\in \mathcal{T}_n$ belong to an `invariant' subset $\mathcal{S} = \cup_{R\in \mathcal{R}}\big\{O(n) \ {\rm orbit \ of \ }R\big\}$, where $\mathcal{R}$ is the collection of fully or partially decoupled tensors in $\mathcal{T}_n$.

Membership problems have been considered in other contexts. For instance, \cite{Cox-Little-Oshea} discusses the implicitization problem to determine when a point belongs to a set given parametrically, which has many applications.    In such works, Groebner basis computations are often used to deduce relations, usually with respect to the underlying field $\mathbb{C}$ (see Chapter 3 in \cite{Cox-Little-Oshea}).

We can also view the problem of characterizing $\mathcal{T}_{n, \rm FD}$ or $\mathcal{T}_{n, \rm PD}$ as an instance of a decomposition problem, namely we are determining when we can express a homogeneous cubic polynomial as a sum of `simpler' functions of `linear forms' using orthogonal transformations. Related decomposition problems for homogeneous, cubic polynomials $f(x;\Ga)$ over $\R$ or $\mathbb{C}$ with respect to general or special linear groups $GL(n)$ or $SL(n)$,
have been considered in the literature. See for instance \cite{Elliott} p. 263, \cite{Huybrechts} with respect to $GL(n)$ over $\mathbb{C}$ when $n=2$ and $n\geq 3$.  More generally, when $f$ is a $d$-order homogeneous polynomial, the study of representatons $f = \sum_{i=1}^r \lambda_i q_i^d$ where $q_i$ are linear expressions in $x_1, \ldots, x_n$ over $\mathbb{C}$ or $\mathbb{R}$ are of current interest and have been considered in \cite{Sturmfels_openproblems} and references therein.  

In our formulation, we look for membership of real $n\times n \times n$ tensors, as opposed to over $\mathbb{C}$, in $O(n)$-invariant sets of fully or partially decoupled tensors.  It appears that such $O(n)$-invariant sets have not been considered much in the literature. In this sense, our work may be among the first to detail some of their structures.

When $n=2$, we show that the notion of being partially decoupled is the same as being fully decoupled, $\mathcal{T}_{2, \rm PD} = \mathcal{T}_{2, \rm FD}$ (Lemma \ref{lem:2.1}).  We will characterize membership in $\mathcal{T}_{2, \rm FD}$ in several ways:  By directly solving for a required $\sigma \in O(2)$ (Proposition \ref{prop:2.2}), by a more geometric argument involving eigenstructures (Proposition \ref{prop:ode}), and by using `applied invariant theory' (Section \ref{full decoup sym proof} and Remark \ref{rem:6.2}).  The `direct solution' and `geometric' arguments yield transformations $\sigma\in O(2)$ such that $\sigma\circ\Gamma$ is in a fully decoupled canonical form.

When $n\geq 3$, $\mathcal{T}_{n, \rm FD} \subsetneq \mathcal{T}_{n, \rm PD}$ (Proposition \ref{prop:PD neq FD}).  However, even for $n=3$, it is substantially more difficult to directly solve for $\sigma \in O(3)$ such that $\sigma \circ \Ga$ is in fully or partially decoupled form, given the number of parameters and the nonlinear structure of the resulting equations.
We show that, generalizing the applied invariant theory approach (cf. books Cox-Little-O'Shea \cite{Cox-Little-Oshea}, Derksen-Kemper \cite{Derkson}, and Sturmfels \cite{Sturmfels}, among others) used in the case $n = 2$, allows us to treat the problem for $n\geq 3$.  A main ingredient in the solutions when $n=3$ is an `integrity  basis', that is a generating set for the subalgebra of polynomial invariants,
constructed by Olive and Auffray \cite{OA}.

 As discussed in Section \ref{invariants section}, an `invariant' is a real function of the components of a tensor $\Gamma\in \mathcal{T}_n$ that is invariant under $O(n)$ action.  There exists a finite basis $\mathcal{I}$ of polynomial invariants generating all other polynomial invariants (Hilbert's finiteness Theorem 2.1.3 \cite{Sturmfels}), although it is difficult to construct such a  basis in general.  Also, it is known that, for a tensor $\Ga$, the collection of the values of the basis polynomials identifies uniquely its $O(n)$ orbit (Proposition \ref{prop:separating}).  So, to solve a membership problem, we would need to identify the (typically polynomial) relations between the values of the basis invariants that hold for all tensors in $\mathcal{S}$ but fail for any tensor not in $\mathcal{S}$.

In particular, we formulate an abstract characterization (Theorem \ref{thm:meta}) to determine membership of $\Gamma\in \mathcal{T}_n$ in an invariant set $\mathcal{S} = \cup_{R\in \mathcal{R}}\big\{O(n) \ {\rm orbit \ of \ }R\big\}$ for a given set of tensors $\mathcal{R}$, via which the specific results for $\mathcal{S}=\mathcal{T}_{n, \rm FD}$ and $\mathcal{S} = \mathcal{T}_{3, \rm PD}\setminus\mathcal{T}_{3, \rm FD}$ are found.  This characterization formulates membership in $\mathcal{S}$ in terms of a `semi-algebraic' set (cf. Remark \ref{rem:semi-algebraic}), and also identifies a tensor $R\in \mathcal{R}$ in the $O(n)$ orbit of $\Gamma$.  In a `generic' setting, we will also identify maps $\sigma \in O(n)$ such that $\sigma \circ \Gamma = R$ (Theorem \ref{rem:rotation}).

In a sense, the idea is to make use of characterizations on a `small' set $\mathcal{R}$, which say might consist of tensors in a particular form.  
Central to our analysis is the stabilizer subgroup $G_\RR$
consisting of all $g \in O(n)$ such that $g \circ R \in \mathcal{R}$ for all $R \in \mathcal{R}$.
The subgroup $G_\RR$ has its own polynomial invariants, say generated by a collection $\mathcal{J}$.
Since
every $O(n)$ polynomial invariant in $\mathcal{I}$, when restricted to $\mathcal{R}$, is a fortiori $G_\RR$ invariant, we may write each member of $\mathcal{I}$ as a polynomial 
in the generating set $\mathcal{J}$.  
One then finds `lifts' of the $G_\RR$ invariants to suitable $O(n)$ invariants $\widetilde{\mathcal{J}}$ on an invariant subset $\widetilde{\mathcal{T}}\subset \mathcal{T}_n$ which includes $\mathcal{S}$.  Interestingly, such extensions $\widetilde{\mathcal{J}}$ may not be polynomial, in which case $\widetilde{\mathcal{T}}$ may be a proper subset of $\mathcal{T}_n$. 
 Then, we lift the relations between $\mathcal{I}$ and $\mathcal{J}$ on $\mathcal{R}$ to those on $\widetilde{\mathcal{T}}$, by substituting the extensions $\widetilde{\mathcal{J}}$ for $\mathcal{J}$.  These lifted relations, as the values of $\mathcal{I}$ separate $O(n)$ orbits, will be necessary conditions for membership of a tensor $\Gamma$ in $\mathcal{S}$.  

Sufficiency will be provided as long as one can solve for a tensor $\hat R\in \mathcal{R}$ such that the values of $\widetilde{\mathcal{J}}$ on a given $\Gamma$ match those on $\hat R$.   With such `solvability', the values of $\mathcal{I}$ on $\Gamma$ are seen by the necessary relations to equal those on $\hat R$.  We would then conclude, as $\mathcal{I}$ separates $O(n)$ orbits, that $\Gamma$ and $\hat R$ are on the same orbit, that is in $\mathcal{S}$.

We will be able to give implicit characterizations of $\mathcal{T}_{n, \rm FD}$ for all $n\geq 3$ (Theorem \ref{thm:full n}).  However, more explicit determinations are made when $n=3$ for $\mathcal{T}_{3, \rm FD}$ and $\mathcal{T}_{3, \rm PD}\setminus\mathcal{T}_{3, \rm FD}$
 (Theorems \ref{thm:full explicit} and \ref{thm:partial n=3}).
 All of these characterizations of membership involve polynomial relations, as suggested earlier, in terms of a generating set $\mathcal{I}$ of $O(n)$ polynomial invariants of $\mathcal{T}_n$, as well as a `solvability' condition to ensure the relations do not involve extraneous $O(n)$ orbits.  Importantly, as suggested, we make use of the `integrity basis' of polynomial $O(3)$ invariants on $\mathcal{T}_3$ \cite{OA} for the results when $n=3$.

From another point of view, given an $O(n)$ invariant set $\mathcal{S}\subset \mathcal{T}_n$, there may be some flexibility in applying the abstract characterization Theorem \ref{thm:meta}.  There may be choice of sets $\mathcal{R}$ and associated $O(n)$-stabilizer subgroups $G_\RR$ where $\mathcal{S}=\cup_{R\in\mathcal{R}}\big\{G_\RR \ {\rm orbit \ of \ }R\big\}$.  If $\mathcal{R}$ is too `big', there may be too few $G_\RR$ invariants $\mathcal{J}$ and, if $\mathcal{R}$ is too `small', solvability in terms of an $R\in \mathcal{R}$ may be more difficult.

Although we characterize $\mathcal{T}_{3, \rm PD}\setminus \mathcal{T}_{3, \rm FD}$, one might in principle adapt the method here to work out the necessary and sufficient conditions to characterize the larger set $\mathcal{T}_{3, \rm PD}$ directly. In this case, other parameters should also be involved.  However, by considering the more particular set, we found the calculations and optimizations in choosing an associated $\mathcal{R}$ amenable, and the specific characterization of $\mathcal{T}_{3, \rm PD}\setminus \mathcal{T}_{3, \rm FD}$, of its own interest, succinct.

We comment when $n\geq 4$, if there were known bases, or generating sets of $O(n)$ invariants extant, then one would be able to provide explicit membership criteria for both $\mathcal{T}_{n, \rm FD}$ and $\mathcal{T}_{n, \rm PD}\setminus \mathcal{T}_{n, \rm FD}$. We remark that such generating sets might be identified using Molien's formula and application of the Reynolds operator (cf. Section \ref{calc inv section}).

Finally, we observe that one might consider other sets of tensors, beyond $\mathcal{T}_{n, \rm FD}$ or $\mathcal{T}_{n, \rm PD}$.  For instance, when $n=4$, a tensor $\Gamma$ may not be partially decoupleable in that one axis `splits' off from the others, but say sets of two axes each split off.  In terms of the KPZ system \eqref{eq:1.1}, the equations would separate into two closed systems, each system governing nontrivially at least two components.  The membership problem for such invariant sets of tensors and generalizations is also of interest and left to future investigations.

\subsection{Plan of the paper}
We discuss in Section \ref{two and three components section} explicit characterizations of $\mathcal{T}_{n, \rm FD}$ and $\mathcal{T}_{n, \rm PD}$ when $n=2$, the equality $\mathcal{T}_{2, \rm FD}=\mathcal{T}_{2, \rm PD}$, and characterization of $\mathcal{T}_{2, \rm FD}$ by two direct solutions.  Then, we discuss notions of applied invariant theory in Section \ref{invariants section} that will be useful for our main results; we also discuss here, via notions of covariants, that $\mathcal{T}_{n, \rm PD}\neq \mathcal{T}_{n, \rm FD}$ when $n\geq 3$.   In Section \ref{structure n=2}, we consider the structure of $SO(2)$ and $O(2)$ invariants when $n=2$, give associated integrity bases, useful in the sequel. Then, we consider a more abstract `membership problem' of invariant subsets in Section \ref{membership section}.    The abstract result is applied to characterize $\mathcal{T}_{n, \rm FD}$ in $n\geq 3$ in Section \ref{full decoup sym proof}.  In Section \ref{explicit n=3}, after detailing Olive and Auffray's integrity basis of $O(3)$ invariants on $\mathcal{T}_3$, we consider explicit characterizations of $\mathcal{T}_{3, \rm FD}$ and $\mathcal{T}_{3, \rm PD}\setminus \mathcal{T}_{3, \rm FD}$.

\section{Two and three components systems}
\label{two and three components section}

We first specify and discuss the problem explicitly when $n=2,3$.
Then, we discuss characterization of $\mathcal{T}_{2, \rm FD}$ by two types of `direct' solutions, one by solving the defining equations (Section \ref{prop:2.2 section}), and the other by viewing the problem in terms of differential equations (Section \ref{ODE section}).

\subsubsection{Formulation when $n=2$}
When $n=2$, we display, in terms of $a_0=\Gamma^2_{2,2}, a_1= \Gamma^1_{2,2}, a_2=\Gamma^2_{1,1}, a_3=\Gamma^1_{1,1}$, a trilinear tensor $\Gamma$ as follows:
\begin{equation}
\label{n=2 tensor}\left(\begin{array}{cc}\Gamma^1_{1,1}& \Gamma^1_{1,2}\\
\Gamma^1_{2,1}& \Gamma^1_{2,2}\end{array}\right) = \left(\begin{array}{cc}a_3&a_2\\
a_2&a_1\end{array}\right) \ {\rm and \ } \left(\begin{array}{cc}\Gamma^2_{1,1}& \Gamma^2_{1,2}\\
\Gamma^2_{2,1}& \Gamma^2_{2,2}\end{array}\right) = \left(\begin{array}{cc}a_2&a_1\\
a_1&a_0\end{array}\right).
\end{equation}
Parametrically, we may identify $\Gamma$ by $(a_0, a_1, a_2, a_3)$.  The tensor may also be represented in terms of the function $f$:
\begin{align*}
f(x;\Ga) 
& = \Ga_{11}^1 x_1^3 + 2 \Ga_{12}^1 x_1^2 x_2 + \Ga_{22}^1 x_1 x_2^2 +
\Ga_{11}^2 x_1^2 x_2 + 2 \Ga_{12}^2 x_1x_2^2 + \Ga_{22}^2 x_2^3 \\
& = a_3 x_1^3 +  3a_2x_1^2 x_2  + 3a_1x_1x_2^2
  + a_0 x_2^3.
\end{align*}

Interestingly, when $n=2$, we observe that `partial decoupleability' is the same as `full decoupleability'.

\begin{lem} \label{lem:2.1}
If $\Ga$ is trilinear and $n=2$, partial decoupleability implies full decoupleability, that is 
$\mathcal{T}_{2, \rm PD} = \mathcal{T}_{2, \rm FD}$.
\end{lem}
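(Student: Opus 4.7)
The plan is to exploit the complete symmetry of $\Ga$, which is preserved under the $O(2)$ action by the remark following \eqref{eq:1.3}, together with the observation that, for $n=2$, a trilinear tensor has only four independent entries, two of which simultaneously control the ``off-diagonal'' structure in both slices. This forces the \emph{same} orthogonal $\si$ that partially decouples the system to also fully decouple it.

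Concretely, suppose $\Ga \in \mathcal{T}_{2,\rm PD}$, so that there exist $\si \in O(2)$ and an index $i_0 \in \{1,2\}$ with $(\si \circ \Ga)^{i_0}_{jk} = 0$ for every $(j,k) \neq (i_0, i_0)$. Set $\tilde \Ga := \si \circ \Ga$; this remains trilinear and can be parametrized by $(\tilde a_0, \tilde a_1, \tilde a_2, \tilde a_3)$ exactly as in \eqref{n=2 tensor}. The crucial observation is that trilinearity forces the identifications $\tilde a_2 = \tilde\Ga^1_{1,2} = \tilde\Ga^2_{1,1}$ and $\tilde a_1 = \tilde\Ga^1_{2,2} = \tilde\Ga^2_{1,2}$, so each of $\tilde a_1$ and $\tilde a_2$ appears in both slices of $\tilde\Ga$.

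A short case analysis then finishes the argument: if $i_0 = 2$, then the vanishing of $\tilde\Ga^2_{1,1}$ and $\tilde\Ga^2_{1,2}$ gives $\tilde a_2 = \tilde a_1 = 0$; if $i_0 = 1$, then the vanishing of $\tilde\Ga^1_{1,2}$ and $\tilde\Ga^1_{2,2}$ again gives $\tilde a_2 = \tilde a_1 = 0$. In either case, the only possibly nonzero entries of $\tilde \Ga$ are the ``diagonal'' ones $\tilde a_3 = \tilde\Ga^1_{1,1}$ and $\tilde a_0 = \tilde\Ga^2_{2,2}$. Hence $\tilde \Ga = \Ga^{(\tilde a_3, \tilde a_0)}$, which exhibits $\Ga \in \mathcal{T}_{2,\rm FD}$ with the same $\si$.

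I do not foresee any genuine obstacle here; the result is essentially a parameter-counting observation enabled by complete symmetry. The coincidence is specific to $n=2$ and, as foreshadowed in the Introduction, fails for $n \geq 3$ because a single slice then carries more independent entries than can be forced to vanish by the symmetry constraints it shares with the other slices.
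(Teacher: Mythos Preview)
Your proof is correct and follows essentially the same approach as the paper: both arguments exploit that the trilinear symmetry forces $\tilde\Ga^2_{11}=\tilde\Ga^1_{12}$ and $\tilde\Ga^2_{12}=\tilde\Ga^1_{22}$, so the vanishing of the off-diagonal entries in one slice automatically kills those in the other. The only difference is cosmetic---you treat both cases $i_0=1,2$ explicitly via the $(\tilde a_0,\ldots,\tilde a_3)$ parametrization, whereas the paper fixes $i=1$ and reads off the second slice directly.
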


\begin{proof}
By partial decoupleability, there exists an orthogonal matrix $\si$
and $i$ (we may assume $i=1$) such that
$$
\si\circ\Ga^1 \equiv (\si\circ\Ga)^1_{jk} = \begin{pmatrix}
\beta_1 & 0 \\ 0 & 0 \end{pmatrix},
$$
for some $\beta_1\in \R$.
Then, by the trilinearity of $\si\circ\Ga$ for orthogonal $\si$, we have
$$
\si\circ\Ga^2 \equiv (\si\circ\Ga)^2_{jk} = \begin{pmatrix}
(\si\circ\Ga)^2_{11} & (\si\circ\Ga)^2_{12}  \\ (\si\circ\Ga)^2_{21}  & (\si\circ\Ga)^2_{22} \end{pmatrix} = \begin{pmatrix}0 & 0 \\ 0 & \beta_2 \end{pmatrix},
$$
for some $\beta_2\in \R$. This implies full decoupleability.
\end{proof}

\begin{rem}
\label{rem:1.1}
\rm Although the four entries $a_0, a_1, a_2, a_3$ characterize the $O(2)$ orbit of a trilinear tensor $\Gamma$, one can characterize orbits with less information.  Indeed, with respect to the rotation $\sigma_\theta\in SO(2)\subset O(2)$, for $\theta \in [0,2\pi]$,
\begin{align}
\label{rotation}
\si_\theta = \begin{pmatrix} 
 \cos\theta & - \sin\theta \\ 
 \sin\theta &   \cos\theta
\end{pmatrix},
\end{align}
if $\Ga$ is trilinear, we have
\begin{align*}
 G(\theta) :=& (\si_\theta\circ\Ga)_{22}^1 
=  \Ga_{11}^1 \cos\theta \sin^2\theta -\Ga_{22}^2 \sin\theta\cos^2\theta \\
& \hskip 20mm
+ \Ga_{11}^2(-\sin^3\theta+ 2\cos^2\theta\sin\theta) + \Ga_{22}^1(\cos^3\theta-
2\sin^2\theta \cos\theta).
\end{align*}
Since $G(0)=-G(\pi)$, there is an angle $\theta$ where $(\si_\theta\circ\Ga)_{22}^1 =G(\theta)=0$.  Hence, the orbit of a trilinear $\Gamma$ is characterized by three values or combinations of $a_0,a_1, a_2, a_3$.
The corresponding $f(\sigma_\theta^{-1}x;\Gamma)$ can be put in form
$$f(\sigma_\theta^{-1}x;\Gamma) = \beta_1x_1^3 +\beta_2 x_2^3 + \gamma\big(3x_1^2x_2 - x_2^3\big),$$
where the trace vector $((\si_\theta\circ \Ga)^1_{1,1}, (\si_\theta\circ\Ga)^2_{2,2})= (\beta_1, \beta_2)$,
and $\gamma$ reflects a coupling between $\Gamma^1$ and $\Gamma^2$.

  As the orbit of a fully decoupled trilinear tensor $\Gamma$ is characterized by the values $\Gamma^1_{1,1}=\beta_1, \Gamma^2_{2,2}=\beta_2$, one suspects that a single equation relating $a_0,a_1, a_2, a_3$ would determine if $\Gamma\in \mathcal{T}_{2, \rm FD}=\mathcal{T}_{2, \rm PD}$.  This is the content of Sections \ref{prop:2.2 section} and \ref{ODE section}.
\end{rem}

  \subsubsection{Formulation when $n=3$}

We now consider when $n=3$.  In this case, a general trilinear tensor $\Gamma$ is of the form
\begin{align*}
&  \Ga^1 
= \begin{pmatrix} 
\Ga_{11}^1 & \Ga_{12}^1 & \Ga_{13}^1 \\
\Ga_{21}^1 & \Ga_{22}^1 & \Ga_{23}^1 \\
\Ga_{31}^1 & \Ga_{32}^1 & \Ga_{33}^1
\end{pmatrix}
= \begin{pmatrix} 
a_1 & b_1 & b_3 \\
b_1 & b_2 & c \\
b_3 & c & b_4
\end{pmatrix}  \\
&  \Ga^2
= \begin{pmatrix} 
\Ga_{11}^2 & \Ga_{12}^2 & \Ga_{13}^2 \\
\Ga_{21}^2 & \Ga_{22}^2 & \Ga_{23}^2 \\
\Ga_{31}^2 & \Ga_{32}^2 & \Ga_{33}^2
\end{pmatrix}
= \begin{pmatrix} 
b_1 & b_2 & c \\
b_2 & a_2 & b_5 \\
c & b_5 & b_6
\end{pmatrix}  \\
&  \Ga^3 
= \begin{pmatrix} 
\Ga_{11}^3 & \Ga_{12}^3 & \Ga_{13}^3 \\
\Ga_{21}^3 & \Ga_{22}^3 & \Ga_{23}^3 \\
\Ga_{31}^3 & \Ga_{32}^3 & \Ga_{33}^3
\end{pmatrix}
= \begin{pmatrix} 
b_3 & c & b_4 \\
c & b_5 & b_6 \\
b_4 & b_6 & a_3
\end{pmatrix}  \\
\end{align*}

Full decoupleability is equivalent to the existence of $\sigma\in O(3)$
such that
\begin{align}
\label{full canonical}
\sigma \circ \Ga^1
= \begin{pmatrix} 
\beta_1 & 0 & 0 \\
0 &   0 & 0 \\ 
0 &  0 & 0     
\end{pmatrix} \quad
\sigma\circ \Ga^2
= \begin{pmatrix} 
0 & 0 & 0 \\
0 &  \beta_2 & 0 \\ 
0 &  0 & 0     
\end{pmatrix} \quad
\sigma\circ \Ga^3
= \begin{pmatrix} 
0 & 0 & 0 \\
0 &   0 & 0 \\ 
0 &  0 & \beta_3
\end{pmatrix},
\end{align}
while partial reducibility is equivalent to finding $\sigma\in O(n)$ so that
\begin{align}
\label{partial reduced}
\sigma \circ \Ga^1
= \begin{pmatrix} 
a_3 & a_2 & 0 \\
a_2 &   a_1 & 0 \\ 
0 &  0 & 0     
\end{pmatrix} \quad 
\sigma\circ \Ga^2
= \begin{pmatrix} 
a_2 & a_1 & 0 \\
a_1 &  a_0 & 0 \\ 
0 &  0 & 0     
\end{pmatrix} \quad
\sigma\circ \Ga^3
= \begin{pmatrix} 
0 & 0 & 0 \\
0 &   0 & 0 \\ 
0 &  0 & \beta_3
\end{pmatrix},
\end{align}
which we call the `reduced' forms.  In these cases, $\mathcal{R}$ in Section \ref{discussion section} are the sets of reduced form tensors.

Unlike when $n=2$, however, we comment that $\mathcal{T}_{n, \rm FD}$ is a strict subset of $\mathcal{T}_{n, \rm PD}$ when $n\geq 3$.  One can also characterize the reduced forms of partially but not fully decoupled tensors in terms of the condition in Proposition \ref{prop:2.2}.  See Proposition \ref{prop:PD neq FD} and Lemma \ref{rem:3.2} for these statements.

\begin{rem}
\label{rem:1.2} \rm
When $n=3$, in a general trilinear tensor $\Gamma$ there are $10$ entries.  However, as orbits of $\Gamma$ are invariant under rotation, which may be identified in terms of an axis specified by a unit vector $\vec{n}$ and an angle $\theta$ about this axis, namely $3$ items (two from $\vec{n}$ and one from $\theta$), in effect only $7$ combinations of the $10$ entries characterize the orbit.

Then, to determine orbits of a fully decoupleable tensor, given in terms of $\beta_1, \beta_2, \beta_3$, one suspects $4$ relations.  For a partially decoupleable tensor in reduced form, by rotating the $x_1, x_2$ directions, mirroring the $n=2$ discussion in Remark \ref{rem:1.1}, one sees that its orbit is given by four parameters, say $\beta_1, \beta_2, \beta_3, \gamma$.  One loosely suspects only $3$ algebraically independent relations then should characterize $\mathcal{T}_{3, \rm PD}$.

As we will see, the $n\geq 3$ analysis is more involved than when $n=2$.  We will be able to give necessary and sufficient relations to be in $\mathcal{T}_{n, \rm FD}$ and when $n=3$ to be in $\mathcal{T}_{3, \rm PD}\setminus \mathcal{T}_{3, \rm FD}$.  However, the number of explicit characterizing relations that we will find in Section \ref{explicit n=3}, by the use of applied invariant theory, when $n=3$ is larger, reflecting some algebraic dependencies among the invariants used.
\end{rem}

\subsection{Direct solution when $n=2$}
\label{prop:2.2 section}

Recall, in terms of $a_0, a_1, a_2, a_3$, the representation of the tensor $\Gamma$ in \eqref{n=2 tensor}.

\begin{prop} \label{prop:2.2}
Assume that $\Ga$ is trilinear and $n=2$.
Then, the KPZ-system \eqref{eq:1.1} is fully decoupleable if and only if
the relation
\begin{equation} \label{eq:2.1}
a_2(a_2-a_0)=a_1(a_3-a_1)
\end{equation}
holds.
\end{prop}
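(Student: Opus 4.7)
The plan is a direct attack: restrict to $\sigma = \sigma_\theta \in SO(2)$ (since any reflection in $O(2)$ preserves the set of fully decoupled tensors—it only permutes diagonal entries and flips signs—composing a reflection with a purported decoupling $\sigma\in O(2)\setminus SO(2)$ yields a rotation that works equally well), and then compute the conditions for full decoupling as two scalar equations in $\theta$.

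Using $f(x;\sigma_\theta\circ\Ga) = f(\sigma_\theta^{-1}x;\Ga)$, I would expand $f(\sigma_\theta^{-1}x;\Ga)$ as a cubic in $x_1,x_2$ with $c=\cos\theta, s=\sin\theta$. By the trilinearity of $\si_\theta\circ\Ga$, full decoupleability is equivalent to the vanishing of the two coefficients attached to $x_1^2x_2$ and $x_1x_2^2$; call these coefficients (divided by $3$) $A(\theta)$ and $B(\theta)$, respectively. Direct expansion yields $A$ and $B$ as cubic polynomials in $c,s$ whose coefficients are linear combinations of $a_0,a_1,a_2,a_3$.

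The key manipulation is to pass from $(A,B)$ to the pair $(cA+sB,\ sA-cB)$. Using $c^2+s^2=1$ and the double-angle identities $c^4-s^4=\cos 2\theta$ and $c^3s+cs^3=\tfrac{1}{2}\sin 2\theta$, these two combinations collapse to
\begin{align*}
cA(\theta) + sB(\theta) &= a_2 \cos 2\theta + \tfrac{a_3-a_1}{2}\sin 2\theta, \\
sA(\theta) - cB(\theta) &= -a_1 \cos 2\theta + \tfrac{a_0-a_2}{2}\sin 2\theta.
\end{align*}
Since the change-of-basis matrix $\bigl(\begin{smallmatrix}c & s\\ s & -c\end{smallmatrix}\bigr)$ has determinant $-1$, the system $A(\theta)=B(\theta)=0$ is equivalent to the vanishing of both expressions above.

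This final system is linear and homogeneous in $(\cos 2\theta,\sin 2\theta)$ with coefficient matrix $M = \begin{pmatrix}a_2 & (a_3-a_1)/2\\ -a_1 & (a_0-a_2)/2\end{pmatrix}$, and a unit-norm solution $(\cos 2\theta,\sin 2\theta)$ exists precisely when $M$ is singular. Since $\det M = \tfrac{1}{2}\bigl[a_2(a_0-a_2)+a_1(a_3-a_1)\bigr]$, this singularity rearranges exactly to $a_2(a_2-a_0)=a_1(a_3-a_1)$, that is, \eqref{eq:2.1}; the degenerate case $M\equiv 0$ forces $a_0=a_1=a_2=a_3=0$ and is trivially covered. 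The main obstacle, and the crux of the argument, is spotting the two specific linear combinations $cA+sB$ and $sA-cB$ that collapse to pure $\cos 2\theta$/$\sin 2\theta$ form; once these are in hand, everything else is elementary linear algebra.
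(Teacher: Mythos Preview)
Your argument is correct and complete. The reduction to $SO(2)$ is handled properly, the key identities for $cA+sB$ and $sA-cB$ check out term by term, and the final determinant condition is equivalent to \eqref{eq:2.1}; the degenerate case $M=0$ is also correctly dispatched.

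Your route differs genuinely from the paper's. The paper substitutes $x=\tan\theta$, converting $F(\theta)=G(\theta)=0$ into two cubic equations in $x$; it then observes that $x\mapsto -1/x$ interchanges the two cubics, so a common root exists exactly when two roots $\alpha,\beta$ of the first cubic satisfy $\alpha\beta=-1$, and Vieta's formulas yield \eqref{eq:2.1}. Your approach instead stays in trigonometric variables and uses the orthogonal change of basis $(A,B)\mapsto(cA+sB,\ sA-cB)$ to collapse the system to a \emph{linear} homogeneous $2\times2$ system in $(\cos 2\theta,\sin 2\theta)$, whose solvability on the unit circle is a single determinant. This is shorter and more elementary---no cubic root analysis, no case splits on $\cos\theta=0$ or $x=0$---and it also gives the decoupling angle explicitly from the kernel of $M$. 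The paper's argument, by contrast, exposes the underlying algebraic symmetry $x\leftrightarrow -1/x$ between the two equations, which is a structural fact not visible in your formulation. Your linear combinations are in fact closely related to the complex eigencoordinates used in the paper's ODE proof (Section~\ref{ODE section}), though you arrive at them by direct trigonometric manipulation rather than by diagonalizing the infinitesimal rotation.
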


\begin{proof}
Consider the rotation $\si_\theta$ given in \eqref{rotation}.
Since $\Ga$ is trilinear, recall and define
\begin{align*}
 F(\theta) := & (\si_\theta\circ\Ga)_{12}^1 
=  \Ga_{11}^1 \cos^2\theta \sin\theta +\Ga_{22}^2 \sin^2\theta\cos\theta \\
& \hskip 20mm
+ \Ga_{11}^2(\cos^3\theta - 2 \cos\theta\sin^2\theta)
+ \Ga_{22}^1 (\sin^3\theta -2 \sin\theta\cos^2\theta)\\
 G(\theta) :=& (\si_\theta\circ\Ga)_{22}^1 
=  \Ga_{11}^1 \cos\theta \sin^2\theta -\Ga_{22}^2 \sin\theta\cos^2\theta \\
& \hskip 20mm
+ \Ga_{11}^2(-\sin^3\theta+ 2\cos^2\theta\sin\theta) + \Ga_{22}^1(\cos^3\theta-
2\sin^2\theta \cos\theta).
\end{align*}
One seeks a condition for $\Ga$ such that $F(\theta)=G(\theta)=0$ holds for some 
common $\theta$.  

We comment that if we would use the orthogonal matrix 
$\begin{pmatrix} 0 & 1 \\ 1 & 0 \end{pmatrix} \si_\theta$ with determinant $-1$, then these equations would hold with respect to `reversed' coefficients
$\Ga^1_{11}=a_0, \Ga^2_{11}=a_1, \Ga^1_{22}=a_2, \Ga^2_{22}=a_3$.
In the following, we will continue with the first type of orthogonal matrix $\si_\theta$.  

First, assume $F=G=0$  holds at $\cos\theta=0$.  Then, since $F(\theta)=\Ga_{22}^1 \sin^3
\theta$, $G(\theta)=-\Ga_{11}^2 \sin^3\theta$, $F=G=0$ holds only
if $\Ga_{22}^1=\Ga_{11}^2 =0$.

Next, assume $\cos\theta\not=0$ and divide $F, G$ by $\cos^3\theta$.
Then, setting $x= \tan \theta\in \R$, we see $F=G=0$ is equivalent to
two equations:
\begin{align*}
& \Ga_{22}^1 x^3 + (\Ga_{22}^2-2\Ga_{11}^2) x^2 + (\Ga_{11}^1-2\Ga_{22}^1) x + \Ga_{11}^2=0, \\
& \Ga_{11}^2 x^3 + (2 \Ga_{22}^1-\Ga_{11}^1)x^2 
+ (\Ga_{22}^2-2\Ga_{11}^2) x-\Ga_{22}^1=0.
\end{align*}
Or, writing $a_0, a_1, a_2, a_3$ as in the statement of the proposition, we have
\begin{align}  \tag{1}
&a_1 x^3 + (a_0-2a_2) x^2 + (a_3-2a_1) x + a_2=0, \\
& a_2 x^3 + (2 a_1-a_3)x^2 + (a_0-2a_2) x-a_1=0.  \tag{2}
\end{align}
One looks for a condition for $\Ga$ such that these two equations have
a common root $x$.

Assume $x$ for $x\not=0$ is a real solution of (1) (respectively (2)).
[Note that $x=0$ is a solution of both (1) and (2) only when $a_1=a_2=0$, 
that is $\Ga_{22}^1=\Ga_{11}^2=0$, same as above.] 
Then, one can observe that 
$X:=-\frac1x$ satisfies (2) (respectively\ (1)).  Thus, if $(x,y,z)$ are three real solutions of (1),
then $(-\frac1x,-\frac1y,-\frac1z)$ are three real solutions of (2).  A common solution
$x=-\frac1x$ cannot happen, since $x$ cannot be real in this case.  
Therefore, a common 
solution exists if $x=-\frac1y$ or $x=-\frac1z$ or 
$y=-\frac1x$ or $y=-\frac1z$ or $z=-\frac1x$ or $z=-\frac1y$.
Namely, for (1) and (2) to have a common solution, two solutions of (1)
must satisfy $xy=-1$.  The converse is also true.

The problem is reduced to find a condition for (1) to have two real solutions $\a, \b$
satisfying $\a\b=-1$.  Let $(\a,\b,\ga)$ be three solutions of (1).  Then,
since $a_1(x-\a)(x-\b)(x-\ga)= a_1 \big(x^3 - (\a+\b+\ga)x^2 + (\a\b+\b\ga + \ga\a)x 
- \a\b\ga\big)$, setting $A=\a+\b = \a - \frac1\a \in \R$ (note: conversely for any given $A\in \R$, 
$A=\a - \frac1\a$, that is $\a^2-A\a-1=0$ has two real solutions, since $D:= A^2+4>0$),
we see that
\begin{align*}
-a_1 (A+\ga) = a_0-2a_2, \quad
a_1(-1+A \ga)= a_3-2a_1, \quad
a_1 \ga = a_2.
\end{align*}
Substituting the last to the first and the second, we have
\begin{align*}
-a_1 A-a_2 = a_0-2a_2, \quad
-a_1 +a_2 A= a_3-2a_1,
\end{align*}
that is 
$a_1 A= a_2 - a_0$ and $a_2 A= a_3-a_1$.
Therefore $A$ (and therefore $\a$) exists if and only if 
$a_2(a_2-a_0)=a_1(a_3-a_1)$ is satisfied.

More precisely, this statement is correct  if $a_1\not=0$ (since $A=(a_2-a_0)/a_1$ from
the first and this $A$ satisfies the second).  In the case $a_1=0$, we have $a_2-a_0=0$
from the first.  But this satisfies the relation $a_2(a_2-a_0)=a_1(a_3-a_1)$.  Moreover,
in the case $a_1=a_2=0$ which we excluded above, this relation holds. 
Summarizing these observations, we have shown that two equations 
(1) and (2) have a common real root if and only if the condition \eqref{eq:2.1} 
holds for $a_0, a_1, a_2, a_3$.  This concludes the proof of the proposition.
\end{proof}

\subsection{Solution by ODE when $n=2$}
\label{ODE section}

Our aim will be to rederive the
condition in Proposition \ref{prop:2.2} by use of a certain ODE which allows also to deduce all fully decoupled tensors on the $SO(2)$ and $O(2)$ orbits of $\Ga$, that is those in form $\sigma\circ \Gamma$ for $\sigma$ in $SO(2)$ and $O(2)$ (cf. Section \ref{invariants section}).  We will restrict in the following to $\Ga\neq 0$, as the claim of Proposition \ref{prop:2.2} is evident when $\Ga=0$.

\begin{prop} \label{prop:ode} Assume that $\Ga\neq 0$ is trilinear and $n=2$. Then,  $\Gamma$
is fully decoupleable if and only if the relation $a_2(a_2-a_0)=a_1(a_3-a_1)$  given in \eqref{eq:2.1} holds.   

Moreover, when \eqref{eq:2.1} holds, with respect to the rotation $\sigma_\theta$ defined in \eqref{rotation} and a reflection $\mathcal{N}$ defined by $\mathcal{N}(x_1,x_2) = (x_2,x_1)$, we have $\sigma_{\theta} \circ \Ga = R$ and $\sigma_{\psi} \mathcal{N} \circ \Ga = S$ are in fully decoupled reduced form, where
\begin{align*}
e^{4i\theta} & = \frac{\left(a_0+a_2\right) + i \left(a_1+a_3\right)}{\left(a_0-3a_2\right) + i \left(a_3-3a_1\right)},  & R_{11}^1 +i R_{22}^2  & = e^{-i\theta} \left[\left(a_0+a_2\right) + i \left(a_1+a_3\right)\right], \\
e^{4i\psi} & = \frac{\left(a_3+a_1\right) + i \left(a_2+a_0\right)}{\left(a_3-3a_1\right) + i \left(a_0-3a_2\right)}, & S_{11}^1 + i S_{22}^2  & = e^{-i\psi} \left[\left(a_3+a_1\right) + i \left(a_0+a_2\right)\right].
\end{align*}
These $\sigma_\theta$ with four possible $\theta$, and $\sigma_\psi\mathcal{N}$ with four possible $\psi$ are the only possible maps in $O(2)$ from $\Gamma$ to the reduced form $R$.
\end{prop}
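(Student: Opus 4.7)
The plan is to parametrize the $SO(2)$-orbit of $\Gamma$ via $R(\theta):=\sigma_\theta\circ\Gamma$, derive a linear ODE for its entries, extract two natural invariants whose equality yields \eqref{eq:2.1}, and then solve the resulting equation to pin down $\theta$. Denoting the entries of $R(\theta)$ by $(\tilde a_3,\tilde a_2,\tilde a_1,\tilde a_0)(\theta)$ in the labeling of \eqref{n=2 tensor}, differentiating the identity $f(x;R(\theta))=f(\sigma_\theta^{-1}x;\Gamma)$ of Lemma \ref{lem:4.1} in $\theta$ and reading off coefficients of $x_1^j x_2^{3-j}$ yields the first-order linear system
\begin{equation*}
\tilde a_3'=-3\tilde a_2,\quad \tilde a_2'=\tilde a_3-2\tilde a_1,\quad \tilde a_1'=2\tilde a_2-\tilde a_0,\quad \tilde a_0'=3\tilde a_1.
\end{equation*}

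Next, I would diagonalize this system via $u:=\tilde a_3-3\tilde a_1$, $v:=\tilde a_0-3\tilde a_2$, $p:=\tilde a_3+\tilde a_1$, $q:=\tilde a_0+\tilde a_2$: a short computation shows the ODE decouples into two independent planar rotations of angular frequencies $3$ and $1$, so that the complex variables $U:=u+iv$ and $P:=p+iq$ evolve as $U(\theta)=U_0 e^{-3i\theta}$ and $P(\theta)=P_0 e^{i\theta}$ with $U_0=(a_3-3a_1)+i(a_0-3a_2)$ and $P_0=(a_3+a_1)+i(a_0+a_2)$. In particular $|U_0|^2$ and $|P_0|^2$ are $SO(2)$-invariants, and a short check shows they are also $O(2)$-invariant (under $\mathcal{N}$, both $U_0$ and $P_0$ are sent to $i$ times their complex conjugates).

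A fully decoupled reduced form has $\tilde a_1=\tilde a_2=0$, which via $p-u=4\tilde a_1$ and $q-v=4\tilde a_2$ is equivalent to $U(\theta)=P(\theta)$. Hence its existence for some $\theta$ forces $|U_0|=|P_0|$; expanding gives $|U_0|^2-|P_0|^2 = 8[a_2(a_2-a_0)-a_1(a_3-a_1)]$, which is exactly \eqref{eq:2.1}. Conversely, assuming \eqref{eq:2.1} and $P_0\ne 0$ (the degenerate case $P_0=U_0=0$ forces $\Gamma=0$, excluded by hypothesis), the equation $U_0 e^{-3i\theta}=P_0 e^{i\theta}$ reduces to $e^{4i\theta}=U_0/P_0$, a unit-modulus number that can equivalently be written $\overline{P_0}/\overline{U_0}$; multiplying numerator and denominator by $-i$ puts it in the form stated in the proposition. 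There are exactly four solutions $\theta\in[0,2\pi)$, and for each, $R^1_{11}+iR^2_{22}$ equals $U(\theta)=P_0 e^{i\theta}$, which matches the stated closed form after the same manipulation.

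The reflection branch is handled identically by applying the above $SO(2)$-analysis to $\mathcal{N}\circ\Gamma$, whose entries are the swap $(a_3,a_2,a_1,a_0)\mapsto(a_0,a_1,a_2,a_3)$, producing the $\psi$ and $S$ formulas. Uniqueness of the eight orthogonal maps follows because the stabilizer in $O(2)$ of the set of reduced fully decoupled forms (generated by coordinate permutation and coordinate sign changes) has order $8$, accounting for four rotations $\sigma_\theta$ and four reflections $\sigma_\psi\mathcal{N}$. The main (cosmetic) obstacle will be matching the natural ODE-derived formulas for $e^{4i\theta}$ and $R^1_{11}+iR^2_{22}$ with the conjugated expressions in the proposition under the modulus equality $|U_0|=|P_0|$, which requires careful tracking of signs and the identity $U_0/P_0=\overline{P_0}/\overline{U_0}$ valid on the unit circle.
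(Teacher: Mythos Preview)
Your proposal is correct and follows essentially the same approach as the paper: derive the linear ODE for the coefficients along the $SO(2)$ orbit, diagonalize it into modes of angular frequency $1$ and $3$, observe that the fully decoupled condition $\tilde a_1=\tilde a_2=0$ forces two of these modes to coincide, and read off the necessary and sufficient modulus condition $|U_0|=|P_0|$ together with the phase equation for $\theta$. The paper writes the $4\times4$ matrix $L$ explicitly and computes its complex left eigenvectors, whereas you reach the same eigenspaces more transparently via the trace/traceless splitting $p,q$ and $u,v$ (which is precisely the $n=2$ instance of the decomposition $\Ga=\mathcal{B}+\tfrac14\mathcal{D}$ used in Section~\ref{what are inv n=2}); this makes your derivation slightly cleaner but is the same idea. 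Your handling of the degenerate case $P_0=0$, the reflection branch via $\mathcal{N}\circ\Ga$, and the count of eight orthogonal maps all match the paper. The only cosmetic point, which you already flag, is the identity $U_0/P_0=\overline{P_0}/\overline{U_0}$ under $|U_0|=|P_0|$ (multiply numerator and denominator by $i$, not $-i$, to land on the stated form).
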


\begin{proof}
Differentiating $\sigma_\theta$ as defined in \eqref{rotation} we get
\begin{align*}
\partial_\theta \si_\theta = \left(\begin{array}{rl} -\sin \theta& -\cos \theta\\
\cos \theta &-\sin \theta\end{array}\right) = \left(\begin{array}{rl} 0& -1\\
1 &0\end{array}\right)\si_\theta.
\end{align*}
Then, with $\sigma = \si_\theta$, and $\epsilon_{11}=\epsilon_{22}=0$ and $\epsilon_{21} = -\epsilon_{12}=1$, it follows from \eqref{eq:1.3} that
\begin{align*}
\partial_\theta (\sigma_\theta \circ \Gamma)^i_{jk}
&= \sum_{i', j', k'}\sum_{m=1,2} \epsilon_{im}\sigma_{mi'}\sigma_{jj'}\sigma_{kk'}\Gamma^{i'}_{j'k'}\\
&\ + \sum_{i', j', k'}\sum_{m=1,2} \epsilon_{jm}\sigma_{ii'}\sigma_{mj'}\sigma_{kk'}\Gamma^{i'}_{j'k'}+\sum_{i', j', k'}\sum_{m=1,2} \epsilon_{km}\sigma_{ii'}\sigma_{jj'}\sigma_{mk'}\Gamma^{i'}_{j'k'}.
\end{align*}

With our convention $a_0= (\sigma_\theta\circ \Gamma)^{2}_{22}$, $a_1= (\sigma_\theta\circ\Gamma)^1_{22}$, $a_2 = (\sigma_\theta\circ\Gamma)^2_{12}$ and $a_3 = (\sigma_\theta\circ \Gamma)^{1}_{11}$,
we have, after a calculation,
\begin{align*}
\partial_\theta (a_0, a_1, a_2, a_3)^t
=(3a_1, 2a_2-a_0, a_3-2a_1, -3a_2)^t
 = L (a_0, a_1, a_2, a_3)^t,
\end{align*}
where $L$ and its left eigenvector matrix $E$ are given by
$$L = \left(\begin{array}{cccc}
0&3&0&0\\
-1&0&2&0\\
0&-2&0&1\\
0&0&-3&0\end{array}\right), \ \ {\rm and \ \ }
E=\left(\begin{array}{cccc}
 1 & -3 i & -3 & i \\
 -1 & -3 i & 3 & i \\
 -1 & i & -1 & i \\
 1 & i & 1 & i 
\end{array}\right).$$
Here, the rows of $E$ are left eigenvectors $u_1, u_2, u_3, u_4$ with corresponding eigenvalues $\lambda_1=3i, \lambda_2=-3i, \lambda_3=i, \lambda_4=-i$; also the superscript `$t$' indicates transpose.

Since $\Ga \neq 0$, $\|a\|^2 := a_0^2 + a_1^2 + a_2^2 + a_3^2 > 0$.  Define 
$E(a_0, a_1, a_2, a_3)^t
=: v(0)$ with the natural extension to $v(\theta)$ defined in terms of the entries of $\sigma_\theta \circ \Ga$.
 Since $E$ is invertible, $\|a\|^2 > 0$ implies $\|v(0)\|^2 > 0$. The evolution of $v(\theta)$ under rotations is given by
\begin{align}
\label{rot form}
\partial_\theta v = EL\left(\begin{array}{c}a_0\\a_1\\a_2\\a_3\end{array}\right) = \left(\begin{array}{cccc}-i&0&0&0\\0&i&0&0\\0&0&-3i&0\\0&0&0&3i\end{array}\right) E\left(\begin{array}{c}a_0\\a_1\\a_2\\a_3\end{array}\right) = \left(\begin{array}{cccc} -i&0&0&0\\0&i&0&0\\0&0&-3i&0\\0&0&0&3i\end{array}\right) v.
\end{align}
Hence,
$v_j(\theta) = e^{\lambda_j\theta}v_j(0)$ for $j=1,2,3,4$.
The tensor
$\Gamma$ can be rotated to a fully decoupled system with parameters $(\beta_1, 0, 0, \beta_2)$, 
if and only if the following system is consistent and can be solved for real parameters $\theta, \beta_1$ and $\beta_2$: 
\begin{align}
\label{solving help}
v_1(\theta)= e^{3i\theta}v_1(0) = \beta_1+i\beta_2, \quad  v_2(\theta)= e^{-3i\theta}v_2(0) = \beta_1-i\beta_2, \nonumber\\
v_3(\theta) = e^{i\theta}v_3(0) = \beta_1 - i\beta_2\ \ {\rm and \ \ }
v_4(\theta)= e^{-i\theta}v_4(0) = \beta_1+i\beta_2.
\end{align}
Assuming consistency for this system, $4(\beta_1^2 + \beta_2^2) = \sum |v_i(\theta)|^2 = \sum |v(0)|^2 = \|a\|^2 > 0$. Consequently, none of the denominators in the following expressions vanish and we obtain
$$\frac{v_4(\theta)}{v_1(\theta)}= \frac{\beta_1 + i\beta_2}{\beta_1 +i\beta_2} = 1 = e^{-4i\theta} \frac{v_4(0)}{v_1(0)} = e^{-4i\theta} \left(\frac{\left(a_0+a_2\right) + i \left(a_1+a_3\right)}{\left(a_0-3a_2\right) + i \left(-3a_1+a_3\right)}\right).$$
An angle $\theta$ satisfying this condition exists if and only if 
$\left|\frac{v_4(0)}{v_1(0)}\right| = 1$.
Squaring,
we get the necessary condition
$$(a_0-3a_2)^2 + (a_3-3a_1)^2 = (a_0+a_2)^2 + (a_1+a_3)^2$$
which after algebra reduces to the desired condition
$a_2(a_2-a_0) = a_1(a_3-a_1)$.

 Contingent upon this condition, and the standing assumption $\|a\|^2>0$,  we obtain
$$
e^{4i\theta} = \frac{\left(a_0+a_2\right) + i \left(a_1+a_3\right)}{\left(a_0-3a_2\right) + i \left(-3a_1+a_3\right)}, \qquad \beta_1+i\beta_2 = e^{-i\theta} \left[\left(a_0+a_2\right) + i \left(a_1+a_3\right)\right].
$$
showing sufficiency as we have identified the rotation angle $\theta$ and the equivalent decoupled tensor $R$ with parameters $(\beta_1, 0, 0, \beta_2)$. Applying the same argument starting with the tensor $\mathcal{N} \circ \Ga$, which is parameterized by $(a_3,a_2,a_1,a_0)$, finishes the proof.
\end{proof}

We comment that this procedure identifies four possible rotations $\theta \in [0,2\pi)$, and correspondingly, $4$ different possibilities for $\beta_1$ and $\beta_2$. These solutions are related in that, if we pick one of the possible angles as $\theta$, the set of allowed rotations is given by $R_{\theta + m \pi/2}$ for $m = 0,1,2,3$  yielding the fully decoupled tensors represented parametrically as $(\beta_1, 0, 0, \beta_2), (\beta_2, 0, 0, -\beta_1), (-\beta_1, 0, 0, -\beta_2)$ and $(-\beta_2, 0, 0, \beta_1)$ respectively. These tensors are all on the $SO(2)$ orbit of the tensor with parameters $(a_0,a_1,a_2,a_3)$ provided that $a_2(a_2-a_0) = a_1(a_3-a_1)$.

To characterize all the decoupled tensors on the $O(2)$ orbit of $\Ga \in  \mathcal{T}_{2, \rm FD}$, we note that reflections in $O(2)$ are obtained by compositions $\sigma_\psi \circ \mathcal{N}, \psi \in [0, 2 \pi)$. Since $\sigma_\psi \mathcal{N} = \mathcal{N} \sigma_{-\psi}$, we get $\sigma_\psi \mathcal{N} \circ \Ga = \mathcal{N} \circ (\sigma_{-\psi} \circ \Ga)$. This tensor is fully decoupled only if $\psi = - \theta$ yielding $\sigma_\psi \mathcal{N} \circ \Ga = \mathcal{N} \circ R$. This gives $4$ decoupled tensors on the $O(2)$ orbit, arising from reflections, represented parametrically by $(\beta_2, 0, 0, \beta_1), (\beta_1, 0, 0, -\beta_2), (-\beta_2, 0, 0, -\beta_1)$ and $(-\beta_1, 0, 0, \beta_2)$. Consequently, except when $\beta_1 = \pm \beta_2$ or $\beta_1 \beta_2 = 0$, there are exactly $8$ distinct decoupled tensors on the $O(2)$ orbit of $\Ga \in \mathcal{T}_{2, \rm FD}$. 

We also note that this `ODE' proof makes implicit use that the norms of $\{v_i(\theta)\}_{i=1,2,3,4}$ do not depend on $\theta$.  These norms are examples of `invariants', more discussed in the next section.

\section{Invariants}
\label{invariants section}

We review some of the basic notions of `applied invariant theory', tailored to our context.  In the following, $G=O(n)$ or $G=SO(n)$.  Mostly, we will focus on $G=O(n)$ in the sequel, although there will be occasions when considering $G=SO(n)$ will be of use.
\begin{defn}
An $\R$-valued function $I(\Ga)$ on trilinear tensors $\mathcal{T}_n$ is called an invariant under the
action of $G$
if $I(\Ga) = I(\si\circ\Ga)$ holds for every $\si\in G$.

\end{defn}
In particular, an invariant $I$ under the action of $G$ is constant along $G$-orbits in $\mathcal{T}_n$.

\begin{defn}
A polynomial $J:\R^n\times \mathcal{T}_n \rightarrow \R$, homogeneous separately in both arguments, is called a covariant under the action of $G$ if $J(x, \Gamma) = J(\sigma x,\sigma\circ \Gamma)$ for every $\sigma\in G$.
\end{defn}

Note that every invariant is also a covariant.  
The homogeneous third degree polynomial, associated to $\Gamma\in \mathcal{T}_n$, 
$f(x;\Ga) =\sum_{i,j,k=1,\ldots, n}\Gamma^i_{j,k}x_ix_jx_k$,
is a covariant by Lemma \ref{lem:4.1}.

\begin{defn}
\label{vector cov defn}
We will say that a vector valued function $v: \mathcal{T}_n \to \R^n$, respectively a real symmetric matrix valued function $Q: \mathcal{T}_n \to M_{n \times n}^{\text{sym}}$, is a covariant, if the linear form $p(\Ga,x) = v(\Ga)^t x$, respectively the quadratic form $q(\Ga,x) = x^t Q(\Ga) x$, is a covariant function.  
We will also say a tensor valued function $D: \mathcal{T}_n \to \mathcal{T}_n$ is covariant if $D(\sigma\circ \Ga) = \si \circ D(\Ga)$ for all $\sigma \in G, \Ga \in \mathcal{T}_n$.
\end{defn}

For instance, as $\Delta=\sum_i\partial^2_{x_i}$ is $O(n)$ invariant, the function $\Delta f(x;\Ga)$ is $O(n)$ covariant, since $\Delta f(\sigma x;\sigma\circ \Ga)= \Delta \big[f(\sigma x; \sigma\circ \Ga)\big]=\Delta \big[f(x; \Ga)\big]=\Delta f(x; \Ga)$.  The matrix $\Gamma^{*2}$ with entries 
\begin{equation}
\label{Gamma *2}
\Gamma^{*2}_{k,\ell} = \sum_{i,j}\Gamma^i_{j,k}\Gamma^i_{j,\ell}
\end{equation}
is also $O(n)$ covariant.
Indeed, $(\sigma x)^t (\sigma\circ\Gamma)^{*2}(\sigma x) = \sum_{k,\ell}\sum_{i,j}(\sigma x)_k(\sigma x)_\ell (\sigma\circ\Gamma)^i_{j,k}(\sigma\circ\Gamma)^i_{j, \ell}$.  One sees $\sum_k (\sigma x)_k (\sigma\circ \Ga)^i_{j,k} = \sum_{i', j', k'} \sigma_{i,i'}\sigma_{j, j'}\Gamma^{i'}_{j', k'}x_{k'}$ using that $\sum_k \sigma_{k,s}\sigma_{k,k'} = 1(s=k')$, given $\sigma\in O(n)$.  Substituting in, using that $\sigma\in O(n)$ again, we conclude $(\sigma x)^t (\sigma\circ\Gamma)^{*2}(\sigma x) = x^t\Gamma^{*2}x$ and $(\sigma \circ \Ga)^{*2} = \sigma \Ga^{*2} \sigma^t$.
Since $\Ga^{*2}$ and $(\sigma \circ \Ga)^{*2} = \sigma \Ga^{*2} \sigma^t$ have the same characteristic equation, symmetric functions of the eigenvalues of $\Ga^{*2}$, 
including its trace and determinant, are $O(n)$ invariant 
 (as will be used in Section
 \ref{full decoup sym proof}).

Consider now the polynomial invariant functions on $\mathcal{T}_n$, denoted $\R[\mathcal{T}_n]^G$, which form a sub-algebra in the space of polynomials $\R[\mathcal{T}_n]$. Since $G$ is a compact Lie group, it is well known that the sub-algebra $\R[\mathcal{T}_n]^G$ is finitely generated (Chapter 2 in \cite{Sturmfels}).  
We will call a collection of polynomial invariants which generate $\R[\mathcal{T}_n]^G$ as a `generating set', which also may be known as a `basis'.  That is, $\{f_1(\Gamma), \ldots, f_\alpha(\Gamma)\}\subset \R[\mathcal{T}_n]^G$ is called a generating set if any polynomial invariant $I$ can be written as a polynomial in $\{f_1, \ldots, f_\alpha\}$. 
 A generating set or basis of $\R[\mathcal{T}_n]^G$ is sometimes called an `integrity basis' as in \cite{OA}.  We note that such an integrity basis may possess `syzygies', or dependent relations between members (cf. Section 1.3 in \cite{Sturmfels}).

An important property of generating sets, which always contain an integrity basis, is that they `separate' or are in $1:1$ correspondence with $G$ orbits.  Note on an $G$ orbit in $\mathcal{T}_n$, values of the members of a generating set, as they are invariants, are constant.

\begin{prop}
\label{prop:separating}
Let $\mathcal{A}$ be a generating set of $\R[\mathcal{T}_n]^G$.  Then, on two different $G$ orbits in $\mathcal{T}_n$, the values of $\mathcal{A}$ are different.
\end{prop}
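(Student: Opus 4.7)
My plan is to use the classical \emph{averaging argument} for compact groups, combined with the Stone--Weierstrass theorem, to produce a polynomial invariant that separates any two prescribed orbits; since $\mathcal{A}$ generates all polynomial invariants, the separation must already be visible in $\mathcal{A}$.

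First I would fix two distinct $G$-orbits $O_1 = G\cdot \Gamma_1$ and $O_2 = G\cdot \Gamma_2$ in $\mathcal{T}_n$. Since $G = O(n)$ or $SO(n)$ is compact and acts continuously on the finite-dimensional space $\mathcal{T}_n \cong \R^{n^3}$, each orbit is the continuous image of a compact set and is therefore a disjoint pair of compact subsets of $\mathcal{T}_n$. By Urysohn's lemma (or just because $\R^{n^3}$ is normal), there is a continuous function $\varphi : \mathcal{T}_n \to \R$ with $\varphi \equiv 0$ on $O_1$ and $\varphi \equiv 1$ on $O_2$. Restricting attention to any compact neighborhood $K$ containing $O_1 \cup O_2$, the Stone--Weierstrass theorem yields a polynomial $p \in \R[\mathcal{T}_n]$ with $\sup_K |p - \varphi| < 1/4$; in particular $p < 1/4$ on $O_1$ and $p > 3/4$ on $O_2$.

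Next I would apply the Reynolds operator. Let $d\mu$ denote normalized Haar measure on the compact group $G$ and define
\begin{equation*}
R(p)(\Gamma) := \int_G p(\sigma \circ \Gamma)\, d\mu(\sigma).
\end{equation*}
Because the action of $G$ on $\mathcal{T}_n$ is linear (see \eqref{eq:1.3}), $\sigma \mapsto p(\sigma \circ \Gamma)$ is, for each $\Gamma$, a polynomial expression in the matrix entries of $\sigma$, and the integrated coefficients in front of each monomial in the entries of $\Gamma$ are just numbers. Hence $R(p)$ is again a polynomial on $\mathcal{T}_n$, of degree at most $\deg p$. A change of variables $\sigma \mapsto \sigma\tau$ using the bi-invariance of Haar measure shows $R(p)(\tau \circ \Gamma) = R(p)(\Gamma)$ for every $\tau \in G$, so $R(p) \in \R[\mathcal{T}_n]^G$. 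Moreover, for $\Gamma \in O_1$ every $\sigma \circ \Gamma$ lies in $O_1$, whence $R(p)(\Gamma) \le 1/4$; analogously $R(p)(\Gamma) \ge 3/4$ on $O_2$. Thus $R(p)$ is a polynomial invariant with $R(p)(\Gamma_1) \ne R(p)(\Gamma_2)$.

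Finally, since $\mathcal{A} = \{f_1, \dots, f_\alpha\}$ generates $\R[\mathcal{T}_n]^G$ as an algebra, there is a polynomial $Q \in \R[y_1, \dots, y_\alpha]$ with $R(p) = Q(f_1, \dots, f_\alpha)$. If $f_i(\Gamma_1) = f_i(\Gamma_2)$ for all $i$, evaluating $Q$ would force $R(p)(\Gamma_1) = R(p)(\Gamma_2)$, contradicting the previous step. Hence at least one $f_i \in \mathcal{A}$ takes different values on $\Gamma_1$ and $\Gamma_2$, completing the proof. The only delicate point is justifying that $R(p)$ is again a \emph{polynomial} of bounded degree (so that it actually lies in $\R[\mathcal{T}_n]^G$ and is expressible via $\mathcal{A}$); this is immediate from linearity of the $G$-action and compactness of $G$, but I would state it explicitly since the whole argument hinges on it.
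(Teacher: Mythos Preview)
Your argument is correct. The paper does not actually supply its own proof of this proposition; it simply writes ``We refer to Appendix C of \cite{AS} for a proof of this proposition.'' So there is nothing in the paper to compare against beyond the bare citation.

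Your route---Urysohn/Stone--Weierstrass to get a separating polynomial, then the Reynolds operator to make it $G$-invariant, then the observation that any invariant polynomial factors through the generators $\mathcal{A}$---is the standard self-contained argument for this fact over compact groups. One small cosmetic point: you do not really need a compact \emph{neighborhood} $K$; since $O_1$ and $O_2$ are themselves compact you can take $K = O_1 \cup O_2$ and apply Stone--Weierstrass directly there (polynomials separate points of any compact subset of Euclidean space). Your version with a neighborhood is of course fine too, because the only places where you evaluate $p$ in the averaging step are points $\sigma \circ \Gamma \in O_i \subset K$. Your closing caveat about $R(p)$ remaining a polynomial of bounded degree is exactly the right thing to flag, and your justification (linearity of the action, integrate coefficient-by-coefficient) is correct.
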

We refer to Appendix C of \cite{AS} for a proof of this proposition.

One may count the cardinality of an integrity basis, and its specification in terms of degrees and any `syzygies' (relations between members), via `Molien's formula' and Hilbert series.
Let $\mathcal{L}=\mathcal{L}_g:G \rightarrow GL(V)$ be a linear representation of the compact group $G$ on a vector space $V$ over $\R$.  Define
$$\Phi(\lambda) = \frac{1}{|G|}\int \frac{1}{\det(\mathds{1} - \lambda \mathcal{L}_g)} dg,$$
where $dg/|G|$ is the associated Haar probability measure and $|G|$ is the volume of $G$.  This average of the reciprocal of a characteristic polynomial may be expanded to give the associated Hilbert series:  Let $\R[V]_d^G$ be the vector space of $V$-polynomial invariants of degree $d$.  Then, it is known that
$$\Phi(\lambda) = \sum_{d\geq 0}c_d\lambda^d$$
is a generating function where $c_d$ is the number of linearly independent polynomial invariants of degree $d$ in $\R[V]_d^G$ (cf. Chapter 2 in \cite{Sturmfels}).

These formulas will play a role in Sections \ref{structure n=2} and \ref{explicit partial n=3}.  For example, a case where we will apply the formula is when $n=2$, $G=SO(2)$ is parametrized by an angle $\theta\in [0, 2\pi]$ with volume $|G|=2\pi$, $V=\mathcal{T}_2$ is the space of $2\times 2\times 2$ trilinear tensors, parametrized by four parameters $a_0, a_1, a_2, a_3$, and $\mathcal{L}_g$ is a $4\times 4$ matrix representing the group action (rotation by $g=\theta$) on the tensor space $V=\mathcal{T}_2$.

In general, it is a difficult question to find an integrity basis, although there are standard procedures involving Reynolds operators that will yield information.

However, in the case $n=3$ with respect to $O(3)$ action, we will use the integrity basis found by Olive and Auffray \cite{OA} to characterize fully decoupled and partially decoupled tensors in Sections \ref{full decoup sym proof}, \ref{explicit n=3}.  In the case $n=2$, we will compute a generating set, or integrity basis, with the aid of Molien's formula, in Section \ref{structure n=2}.

\subsection{Calculating invariants}
\label{calc inv section}

We discuss some ways to find invariants $I$, useful in the sequel.  
A classic method is by the well known `Reynolds operator', that is by averaging a 
polynomial function $p$ along group actions of $\Gamma$ (cf. Chapter 2 in \cite{Sturmfels}):
Recall $G$ is either $SO(n)$ or $O(n)$.  For every polynomial $p$ of the entries of $\Gamma\in \mathcal{T}_n$, we have
\begin{align}
\label{lem:reynolds}
I(\Ga) := \frac{1}{|G|}\int_{G}p\big(\sigma\circ \Gamma\big)d\sigma
\end{align}
is a $G$-invariant, where $d\sigma/|G|$ is Haar probability measure and $|G|$ is the volume of $G$.

\begin{rem}
\rm One may in principle use the Reynolds operator \eqref{lem:reynolds} with respect to a list of minimal polynomials $p$ of the parameters of $\Gamma$ to exhaustively compute invariants, for any $n\geq 2$.  

These can be subsequently organized with respect to Molien's formula.  
Indeed, 
one can compute the invariants $I_p$ from a list of polynomials $p$.
A generating collection, which would include an integrity basis, could be identified in the process.
\end{rem}

We will however primarily make use of a decomposition of $\Gamma$ into a traceless tensor and a rank $1$ tensor to compute invariants; see \cite{Weyl} for related methods.  Indeed, in Section \ref{what are inv n=2}, we compute them for $n=2$, and discuss their use in the computations of Olive and Auffray when $n=3$ in Section \ref{OA section}.

Consider the `trace' vector of $\Ga$ in $\R^n$ defined by
\begin{align}
\label{u eqn}
u= {\rm Trace}\; \Ga = \frac{1}{6}\nabla\big(\Delta f(x;\Ga)\big) = \Big(\sum_{\ell=1}^n \Gamma^1_{\ell, \ell}, \ldots, \sum_{\ell=1}^n \Gamma^n_{\ell, \ell}\Big ).
\end{align}
One may see that $u$ is an $O(n)$ covariant vector:  Indeed, $(\sigma x)\cdot {\rm Trace}\; (\sigma \circ \Ga) = x\cdot {\rm Trace}\; \Ga$ using $\sum_i \sigma_{i, k} \sigma_{i, \ell} = 1(k=\ell)$.  Moreover, similarly,  
$I(\Ga) = \|u\|^2=u\cdot u$ is an $O(n)$ invariant.

To find other invariants, given a trace vector $u={\rm Trace}\; \Ga \in \R^n$, we can form a homogeneous cubic polynomial 
$$f_1(x;\Ga) = \frac{3}{n+2}(u\cdot x)\|x\|^2,$$
corresponding to a tensor with the same trace vector:  Note on $\R^n$ that
$$\Delta(x_i \|x\|^2) = \Delta\big(x_i^3 + \sum_{j\neq i} x_ix_j^2\big) = (2n+4)x_i,$$
and therefore
$$\frac{1}{6}\nabla\big(\Delta f_1(x)\big)= \frac{1}{6} \frac{3}{n+2} \nabla\big(\Delta (u\cdot x)\|x\|^2\big) = u.$$
Since $u\cdot x$ is $O(n)$ covariant and $\|\sigma x\|^2=\|x\|^2$, the function $f_1$ is a $O(n)$ covariant.
It corresponds to the tensor $\mathcal{B}$ given by
$$\mathcal{B}(X,Y,Z) = \frac{1}{n+2}\Big[(u\cdot X)(Y\cdot Z) + (u\cdot Y)(Z\cdot X) + (u\cdot Z)(X\cdot Y)\Big],$$
which is also $O(n)$ covariant, as can be seen by its form.

With respect to a general tensor $\Gamma$, we may decompose $f$ as $f(x) = f_1(x) + \frac{1}{n+2}f_3(x)$ where
$$f_3(x) = (n+2)\big(f(x)-f_1(x)\big).$$
Observe that the trace vector $\frac{1}{6}\nabla\big( \Delta f_3\big)$ with respect to the homogeneous cubic polynomial $f_3$ is the zero vector by construction.
Here, $f_3$ is also covariant, corresponding to $O(n)$ covariant tensor $\D = (n+2)\big(\Gamma - \mathcal{B}\big)$.

One can find other $O(n)$ invariants and covariants by combining $u$, $\mathcal{B}$, $\D$ in various ways.  For instance, the trace and determinant of the $n\times n$ matrix $\D^{*2}$ and a vector $w$, defined in terms of $\D$ and $u$, with entries 
\begin{align}
\label{eq:3.5}
\D^{*2}_{k,\ell} = \sum_{i,j=1,\ldots, n}\D^i_{j,k}\D^i_{j,\ell}, \ \  w_m = \sum_{i,j=1,\ldots, n} \D^i_{j,m}u_i u_j
\end{align}
can be seen to be invariants and covariant with respect to $O(n)$.  These will also be of use in specifying Olive and Auffray's basis in Section \ref{OA section}.

\subsection{$\mathcal{T}_{n, \rm FD} \neq \mathcal{T}_{n, \rm PD}$ when $n\geq 3$}

We show that there are partially decoupleable tensors which are not fully decoupleable when $n\geq 3$, using notions of covariants defined earlier.  Also, when $n=3$, we state when a $\Gamma\in \mathcal{T}_{3, \rm PD}$ is not in $\mathcal{T}_{3, \rm FD}$.  

The following proposition complements the equality of the sets shown in Proposition \ref{prop:2.2}
when $n=2$.

\begin{prop}
\label{prop:PD neq FD}
The inclusion $\mathcal{T}_{n, \rm FD}\subset \mathcal{T}_{n, \rm PD}$ is strict when $n\geq 3$.
\end{prop}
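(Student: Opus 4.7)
The plan is to exhibit, for each $n \geq 3$, an explicit tensor $\Gamma \in \mathcal{T}_{n,\rm PD} \setminus \mathcal{T}_{n,\rm FD}$. Membership in $\mathcal{T}_{n,\rm PD}$ will be immediate from the construction, while non-membership in $\mathcal{T}_{n,\rm FD}$ will be certified by an $O(n)$-invariant identity that holds on all of $\mathcal{T}_{n,\rm FD}$ but fails on the example.

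Specifically, I would take the trilinear $\Gamma \in \mathcal{T}_n$ whose associated cubic form is $f(x;\Gamma) = 3 x_1 x_2^2$, with all entries involving any index $\geq 3$ set to zero; the only nonzero entries are then $\Gamma^1_{2,2} = \Gamma^2_{1,2} = \Gamma^2_{2,1} = 1$. This is modeled on the $n=2$ tensor with parameters $(a_0,a_1,a_2,a_3) = (0,1,0,0)$, which violates the condition \eqref{eq:2.1} of Proposition \ref{prop:2.2}. That $\Gamma \in \mathcal{T}_{n,\rm PD}$ is clear: taking $\sigma = I$ and $i = n$ in Definition \ref{FD PD defn}(2), all entries $\Gamma^n_{j,k}$ vanish, so in particular those with $(j,k) \neq (n,n)$.

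To rule out full decoupleability, I would use the two $O(n)$-invariants $J_1(\Gamma) = \|u\|^2$, with $u$ the trace vector from \eqref{u eqn}, and $J_2(\Gamma) = \sum_{i,j,k}(\Gamma^i_{j,k})^2$; both are $O(n)$-invariants, since $u$ is an $O(n)$-covariant vector and the second is the squared Frobenius norm, preserved by the orthogonal action. On a canonical fully decoupled tensor $\Gamma^{(\beta_1,\ldots,\beta_n)}$, whose only nonzero entries are $\Gamma^i_{i,i}=\beta_i$, one computes $J_1 = J_2 = \sum_i \beta_i^2$. Hence the equality $J_1 = J_2$ is a necessary condition for $\Gamma \in \mathcal{T}_{n,\rm FD}$. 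For my example, $u = (1,0,\ldots,0)$ gives $J_1 = 1$, while the three nonzero entries yield $J_2 = 3$, so $1 \neq 3$ and the example fails to be fully decoupleable.

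No significant obstacle is anticipated: the invariance of $J_1$ and $J_2$ is standard, and the computations on the explicit example are immediate. The underlying intuition is that, for $n \geq 3$, appending trivial coordinate directions makes a tensor partially decoupleable essentially for free, while the scalar obstruction $J_1 \neq J_2$ inherited from the nontrivial sub-tensor still prevents full decoupleability.
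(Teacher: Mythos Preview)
Your proof is correct and follows essentially the same strategy as the paper: exhibit an $O(n)$-invariant identity that holds on all of $\mathcal{T}_{n,\rm FD}$ and a partially decoupleable tensor violating it. The paper uses the matrix covariant identity $\Gamma^{*2} = u\cdot\Gamma$ on $\mathcal{T}_{n,\rm FD}$, whose trace is exactly your scalar identity $J_2 = J_1$ (since $\mathrm{Tr}(\Gamma^{*2}) = \sum_{i,j,k}(\Gamma^i_{j,k})^2$ and $\mathrm{Tr}(u\cdot\Gamma) = \|u\|^2$), so your argument is a clean scalar specialization of theirs with a more explicit counterexample.
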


\begin{proof}
Recall $\Gamma^{*2}$ and $u = {\rm Trace}\, \Gamma$ defined in \eqref{Gamma *2}, \eqref{u eqn}.

  We claim that the $n\times n$ matrices $\Gamma^{*2}$ with entries $\Gamma^{*2}_{k,\ell}=\sum_{i, j}\Gamma^i_{j, k}\Gamma^i_{j, \ell}$ and the inner product $u\cdot \Gamma$ with entries $(u\cdot \Gamma)_{k, \ell} = \sum_i \Gamma^i_{k, \ell}u_i$ are equal when $\Gamma$ is fully decoupleable.  Indeed, the equality follows by evaluating the $O(n)$ covariants $\Gamma^{*2}$ and $u\cdot \Gamma$ on the canonical form when $\Gamma^i_{i,i}=\beta_i$ and $\Gamma^i_{j, k}=0$ when $j\neq i$ or $k\neq i$.  In this case, $u = (\beta_1, \ldots, \beta_n)$, and $u\cdot\Gamma = \Gamma^{*2}$ is a diagonal matrix with diagonal elements $\beta_1^2, \ldots, \beta_n^2$.

However, one may find forms $\Gamma$ of partially decoupled tensors where $\Gamma^{*2}\neq u\cdot \Gamma$ when $n\geq 3$.  
Indeed, consider $\Gamma$ in a reduced form (cf.
\eqref{partial reduced} when $n=3$), 
where its $2\times 2\times 2$ subtensor $\Gamma^1_{j,k} = G^1_{j,k}, \Gamma^2_{j,k}=G^2_{j,k}$ for $j,k = 1, 2$ is in form
\begin{align}
\label{n=2 canonical}
\left(\begin{array}{cc} \beta_1&\gamma\\\gamma&0\end{array}\right), \ \ \left(\begin{array}{cc}\gamma&0\\0&\beta_2-\gamma\end{array}\right),
\end{align}
and $\Gamma^i_{i,i}=\beta_i$ for $i=3, \ldots, n$ and $\Gamma^i_{j,k} = 0$ when $i\geq 3$ and $j\neq i$ or $k\neq i$.
Clearly the $n$th component is `split off' and $\Gamma$ is partially decoupled.

We compute $\Gamma^{*2}_{1,1} = 2\gamma^2 + \beta_1^2$ and $(u\cdot \Gamma)_{1,1}=\gamma\beta_2 + \beta_1^2$.  Clearly, when $\gamma\beta_2\neq 2\gamma^2$, which is the condition that the subtensor $G$ is not fully decoupleable (cf. Proposition \ref{prop:2.2}), the two matrices do not agree.
\end{proof}

\begin{lem}
\label{rem:3.2}
When $n=3$, if a tensor $\Gamma$ is partially decoupleable, then it is not fully decoupleable
exactly when its reduced form \eqref{partial reduced}
is such that $a_2(a_2-a_0)\neq a_1(a_3-a_1)$. 
\end{lem}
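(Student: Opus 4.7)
The plan is to reduce the statement to the two-component criterion of Proposition~\ref{prop:2.2} applied to the upper $2\times 2\times 2$ block of a reduced partially decoupled tensor. Since decoupleability is an $O(3)$-orbit property, the first step is to replace $\Gamma$ by its reduced form $\Gamma'$ as in \eqref{partial reduced}; its third axis is already split off and its $2\times 2\times 2$ sub-tensor $G$ on indices $\{1,2\}$ carries exactly the entries $(a_0,a_1,a_2,a_3)$ of \eqref{n=2 tensor}. The target then becomes: $\Gamma' \in \mathcal{T}_{3,\mathrm{FD}}$ if and only if the Proposition~\ref{prop:2.2} condition $a_2(a_2-a_0)=a_1(a_3-a_1)$ holds for $G$.

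For the sufficient direction, assuming the condition, Proposition~\ref{prop:2.2} furnishes $\tau \in O(2)$ with $\tau \circ G$ fully decoupled in $\mathcal{T}_2$. I would lift $\tau$ block-diagonally to $\tilde{\tau} \in O(3)$ acting as $\tau$ on the first two coordinates and as the identity on the third. A direct inspection of the transformation rule \eqref{eq:1.3}, using that $\tilde\tau$ preserves the splitting $\R^3 = \R^2 \oplus \R e_3$, shows that every entry of $\tilde\tau \circ \Gamma'$ with some but not all indices equal to $3$ remains zero, that $(\tilde\tau\circ\Gamma')^3_{3,3}=\beta_3$, and that the remaining upper $2\times 2\times 2$ block is precisely $\tau \circ G$. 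Hence $\tilde\tau\circ\Gamma'$ is in fully decoupled canonical form and $\Gamma \in \mathcal{T}_{3,\mathrm{FD}}$.

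For the necessary direction, the strategy is to invoke an $O(3)$-covariant identity that distinguishes fully decoupled from merely partially decoupled tensors. The matrix-valued $O(3)$-covariants $\Gamma^{*2}$ and $u \cdot \Gamma$ used in the proof of Proposition~\ref{prop:PD neq FD} coincide on every fully decoupled canonical form, and both transform by simultaneous conjugation $M \mapsto \sigma M \sigma^t$; hence the matrix identity $\Gamma^{*2} = u \cdot \Gamma$ persists along the entire $O(3)$ orbit. Under the assumption $\Gamma \in \mathcal{T}_{3,\mathrm{FD}}$, this identity therefore holds on $\Gamma'$. I then plan to extract the single scalar consequence from the $(1,1)$-entry: a short computation gives $u = (a_1+a_3,\,a_0+a_2,\,\beta_3)$, $(\Gamma'^{*2})_{1,1} = a_3^2 + 2a_2^2 + a_1^2$, and $(u\cdot\Gamma')_{1,1} = a_3(a_1+a_3)+a_2(a_0+a_2)$, whose equality simplifies precisely to $a_1(a_3-a_1)=a_2(a_2-a_0)$.

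The main obstacle is the necessary direction: a direct search for a rotation in $O(3)$ achieving full decoupling of $\Gamma'$ leads to the nonlinear systems flagged as intractable in the introduction. The covariance argument bypasses this entirely, provided one chooses a covariant that separates $\mathcal{T}_{3,\mathrm{FD}}$ within $\mathcal{T}_{3,\mathrm{PD}}$; the pair $(\Gamma^{*2},\, u\cdot\Gamma)$ from Proposition~\ref{prop:PD neq FD} is tailor-made for this purpose and collapses the whole problem to reading a single entry of a matrix identity on the reduced form.
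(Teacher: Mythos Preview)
Your proof is correct and follows essentially the same approach as the paper: both directions invoke Proposition~\ref{prop:2.2} for the sufficient part and the covariant identity $\Gamma^{*2}=u\cdot\Gamma$ from the proof of Proposition~\ref{prop:PD neq FD} for the necessary part. Your version is slightly more direct in the necessary direction, computing the $(1,1)$-entry on the general reduced form \eqref{partial reduced} rather than first rotating the subtensor to the special form \eqref{n=2 canonical} as the paper does before quoting the computation already done there.
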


\begin{proof} Consider the reduced form \eqref{partial reduced}.  On the one hand, if $a_2(a_2-a_0)=a_1(a_3-a_1)$, by Proposition \ref{prop:2.2}, we would have by a rotation of axes $x_1, x_2$ that $\Gamma$ can be put in reduced fully decoupleable form.  

On the other hand, given a reduced form $\Gamma\in \mathcal{T}_{3, \rm PD}$ such that $a_2(a_2-a_0)\neq a_1(a_3-a_1)$, rotate its $2\times 2\times 2$ subtensor (cf. Remark \ref{rem:1.1}) so that it is in form \eqref{n=2 canonical}.  If $\beta_2\neq 2\gamma$, by the proof of Proposition \ref{prop:PD neq FD}, $\Gamma\not\in \mathcal{T}_{3, \rm FD}$.  However, $\beta_2\neq 2\gamma$ is the Proposition \ref{prop:2.2} condition, equivalent to the unrotated $a_2(a_2 - a_0)\neq a_1(a_3-a_1)$, for the subtensor not to be fully decoupleable.  
\end{proof}

\section{The structure of invariants when $n=2$}
\label{structure n=2}
We will find an integrity basis when $n=2$ by computing invariants via the decomposition of $\Gamma$ mentioned in Section \ref{calc inv section}.  Some of these calculations will be useful in the sequel.

To identify an integrity basis of $O(2)$ invariants, we first consider Molien's formula.  Recall that $O(2)$ is a semidirect sum of $SO(2)$ and $\Z_2$, corresponding to `rotations with a reflection', that is $g\in O(2)$ when $g=(\theta, \epsilon)$ for $\theta\in [0,2\pi]$ and $\epsilon\in \{0,1\}$, say.

The action of $\sigma\in SO(2)$ on $\Gamma$ may be given via a linear representation, a $4\times 4$ matrix $\mathcal{L}_\sigma$ acting on the coefficients $\Gamma\sim(a_0, a_1, a_2, a_3)$ (thought of as a column vector).  One way to find $\mathcal{L}_\sigma$ is to compute $\sigma\circ \Gamma$ from \eqref{eq:1.3}. Another way is to calculate the coefficients of the transformed function $f(x;\sigma\circ \Gamma)=f(\sigma^{-1}x;\Ga)$.  With respect to $\sigma=\sigma_\theta$ in \eqref{rotation}, we find

$$
\mathcal{L}_\sigma = \left(
\begin{array}{cccc}
 \cos ^3\theta & 3 \sin \theta \cos ^2\theta & 3 \sin ^2\theta \cos \theta
    & \sin ^3\theta \\
 -\sin \theta \cos ^2\theta & \cos ^3\theta-2 \sin ^2\theta
   \cos \theta & 2 \sin \theta \cos ^2\theta-\sin ^3\theta & \sin ^2\theta
   \cos \theta \\
 \sin ^2\theta \cos \theta & \sin ^3\theta-2 \sin \theta \cos ^2\theta &
   \cos ^3\theta-2 \sin ^2\theta \cos \theta & \sin \theta \cos ^2\theta
   \\
 -\sin ^3\theta & 3 \sin ^2\theta \cos \theta & -3 \sin \theta \cos
   ^2\theta & \cos ^3\theta \\
\end{array}
\right)
$$

We now use Molien's formula to compute the Hilbert's series:  Write
\begin{align*}
\det(\mathds{1}_{2\times 2} - \lambda \mathcal{L}_\sigma) &=1+2 \lambda ^2+\lambda ^4-2 \left(\lambda +\lambda ^3\right) \cos\theta+2 \lambda ^2 \cos(2 \theta)\\
&\ \ -2 (\lambda + \lambda ^3) \cos(3 \theta)+2 \lambda^2 \cos(4 \theta).
\end{align*}
Then,
we may calculate
\begin{align*}
\Phi_{SO(2)}(\lambda) & = \frac{1}{2\pi}\int \frac{1}{\det(\mathds{1}_{2\times 2} - \lambda \mathcal{L}_{\sigma_\theta})} d\theta
\ = \ \frac{1+\lambda ^4}{\left(1-\lambda ^2\right)^2 \left(1-\lambda ^4\right)}\\
&=1+2 \lambda ^2+5 \lambda ^4+8 \lambda ^6+13 \lambda ^8+18 \lambda ^{10}+25 \lambda ^{12} + \cdots
\end{align*}

From the form of the series, 
one can infer from the coefficient of $\lambda^2$ that there are $2$ invariants of order $2$.  These two could be squared separately or multiplied together to make $3$ invariants of order $4$.  We see from the coefficient of $\lambda^4$ that there however are $5$ invariants of this order, so there must be two other invariants of order $4$.  One can combine these $4$ invariants, two of order $2$, and two of order $4$ in eight ways to make an invariant of order $6$, in accord with the Hilbert function.  

However, if these four invariants (two of order $2$ and two of order $4$) were independent, there would be fourteen invariants of order $8$, yet the Hilbert series gives only thirteen, indicating a single `syzygy' at order $8$, meaning a relation between the four invariants.

\medskip
Turning now to the representation of the $(\theta, \epsilon)\in O(2)$ action, note that $\mathcal{L}_{(\theta, \epsilon)} = \mathcal{L}_\sigma$ when $\epsilon = 0$, say when there is no reflection applied.  
Recall the reflection operator $\mathcal{N}$ taking $x_1$ to $x_2$ and $x_2$ to $x_1$ corresponds, in terms of the coefficients of the tensor, to the map $\mathcal{N}(a_0, a_1, a_2, a_3) = (a_3,a_2,a_1,a_0)$, yielding a matrix (also denoted by $\mathcal{N}$), 
\begin{align}
\label{reflection}
\mathcal{N} = \left(\begin{array}{cccc}0&0&0&1\\0&0&1&0\\0&1&0&0\\1&0&0&0\end{array}\right).
\end{align}
Then, when $\epsilon =1$,
$$\mathcal{L}_{(\theta, \epsilon)} = \mathcal{N}\mathcal{L}_\sigma .$$

One may calculate 
$\det(\mathds{1}_{2\times 2} - \lambda \mathcal{L}_{(\theta, 1)}) = (1 - \lambda^2)^2$,
without any $\theta$ dependence.  Hence, 
$$\frac{1}{2\pi}\int \frac{1}{\det(\mathds{1}_{2\times 2} - \lambda \mathcal{L}_{(\theta, 1)})} d\theta = \frac{1}{(1 - \lambda^2)^2}.$$

Consequently
\begin{align}
\label{Molien O(2)}
\Phi_{O(2)}(\lambda) = & \frac{1}{4\pi}\int \frac{1}{\det(\mathds{1}_{2\times 2} - \lambda \mathcal{L}_{(\theta, 0)})} d\theta + \frac{1}{4\pi}\int \frac{1}{\det(\mathds{1}_{2\times 2} - \lambda \mathcal{L}_{(\theta, 1)})} d\theta  \nonumber \\
& = \frac{1}{2} \Phi_{SO(2)}(\lambda) +  \frac{1}{2(1 - \lambda^2)^2} \ = \ 
   \frac{1}{(1-\lambda^2)^2(1-\lambda^4)}\nonumber\\
	&= 1+2 \lambda ^2+4 \lambda ^4+6 \lambda ^6+9 \lambda ^8+12 \lambda ^{10}+16 \lambda^{12}+\cdots
\end{align}
From the generating function for the Hilbert series expansion, we see that there are three invariants, two of degree $2$, and one of degree $4$ that are independent, without syzygies (unlike in the $SO(2)$ setting).

\subsection{ Integrity bases with respect to $SO(2)$ and $O(2)$ }
\label{what are inv n=2}
We now make use of the covariant decomposition discussed earlier to derive an integrity basis with respect to $O(2)$.  At the end of the section, we give another integrity basis used in Remark \ref{rem:6.2} to give an `applied invariant theory' proof of Proposition \ref{prop:2.2}.

Recall the discussion in Section \ref{calc inv section} and, when $n=2$, that 
$f(x;\Gamma) = a_3 x_1^3+3 a_2 x_1^2 x_2+3 a_1 x_1 x_2^2+a_0 x_2^3$ for a general tensor $\Gamma$.  Recall also its $O(2)$ covariant trace vector $u = \nabla (\Delta f)/6= (a_1+a_3, a_0+a_2)$.

Recall also 
\begin{align}
\label{f_1 eq}
f_1 = \frac{3}{4}(u\cdot x)\|x\|^2
= \frac{3}{4}\big((a_1+a_3)x_1 + (a_0+a_2)x_2\big)\|x\|^2,
\end{align}
 corresponding to a tensor $\mathcal{B}$ with the same trace vector $u$ (cf. \eqref{u eqn}).
We also form 
$f_3 =4\big(f - f_1\big)$, which has trace $\frac{1}{6}\nabla \big(\Delta f_3\big)=0$, with corresponding zero trace tensor $\D$ in form
\begin{align}
\label{D eq}
\D^1 = \left(\begin{array}{cc}-3 a_1+a_3& -a_0+3 a_2\\-a_0+3 a_2 &3 a_1-a_3\end{array}\right),\ \  \D^2 = \left(\begin{array}{cc}-a_0+3 a_2 &3 a_1-a_3\\3 a_1-a_3 &a_0-3 a_2\end{array}\right).
\end{align}
The tensor corresponding to $\mathcal{B}$ is found from $\mathcal{B}= \Gamma - \frac{1}{4}\D$.

Recall also the covariant vector $w$ (cf. \eqref{eq:3.5}) given by $w\cdot z = \D(u,u,z)$ for all $z \in \R^n$ in form
\begin{align*}
w &= \big( -3 a_1^3-5 a_3 a_1^2+a_0^2 a_1+9 a_2^2 a_1-a_3^2 a_1+10 a_0 a_2 a_1+a_3^3-3 a_0^2 a_3+5 a_2^2 a_3+2 a_0 a_2 a_3 ,\\ 
   &\ a_0^3-a_2 a_0^2+5 a_1^2 a_0-5 a_2^2 a_0-3 a_3^2 a_0+2 a_1 a_3 a_0-3 a_2^3+a_2 a_3^2+9 a_1^2 a_2+10 a_1 a_2 a_3\big).
   \end{align*}

We now form a number of $SO(2)$ invariants:  The first is $j_2 = \|u\|^2$.  Another is the trace, $h_2 = \sum_k (\D)^{*2}_{k,k}=\sum_{i,j,k=1,2}(\D)^i_{j,k}(\D)^i_{j,k}$.  One more is $\ell_4 = u \cdot w = \D(u, u, u) = \sum_{i,j,k=1,2}(\D)^i_{j,k}u_iu_ju_k$.
Finally, consider the $2\times 2$ matrix $[u^t, w^t]$ and let $m_4 = \det [u^t, w^t]$.

Explicitly,
\begin{align*}
j_2&=(a_0+a_2)^2+(a_1+a_3)^2,\\
h_2&=(a_0-3 a_2)^2+3 (-a_0+3 a_2)^2+3 (3 a_1-a_3)^2+(-3 a_1+a_3)^2,\\
\ell_4&=a_0^4+3 a_1^4-3
a_2^4-8 a_1^3 a_3+24 a_1 a_2^2 a_3+6 a_2^2 a_3^2+a_3^4\\
&\ \ \ -8 a_0 a_2 \left(-3 a_1^2+a_2^2-3 a_1 a_3\right)+6 a_0^2 \left(a_1^2-a_2^2-a_3^2\right)+6
a_1^2 \left(3 a_2^2-a_3^2\right),\\
m_4&=4 (-3 a_0^2 a_1 a_2+a_0^3 a_3+a_2 \left(3 a_1^3+6 a_1^2 a_3-2 a_2^2 a_3-3 a_1 \left(a_2^2-a_3^2\right)\right)\\
&\ \ \ +a_0
\left(2 a_1^3-6 a_1 a_2^2+3 a_1^2 a_3-a_3 \left(3 a_2^2+a_3^2\right)\right)).
\end{align*}

Note that $j_2, h_2$ and $\ell_4$ are invariant with respect to $O(2)$ as they are given in terms of the coefficients of the $O(2)$ covariant characteristic polynomial of $\D^{*2}$.  However, $m_4$ is not invariant with respect to $O(2)$ action,  Indeed, under the reflection $x_1 \to x_2,x_2 \to x_1$ where $(a_0, a_1, a_2, a_3) \to (a_3, a_2, a_1, a_0)$, we have that $m_4 \to - m_4$.

Nevertheless, via use of the identity $(u\cdot w)^2 + \big(\det [u^t, w^t]\big)^2 = \|u\|^2\|w\|^2$, or a Groebner basis computation where the parameters $a_0, a_1, a_2, a_3$ are eliminated to see relations among $j_2, h_2, \ell_4, m_4$, one finds 
\begin{align*}
-h_2 j_2^3+4 \ell_4^2+4 m_4^2=0.
\end{align*}
Hence, one of the invariants of degree $4$, when squared, say $m_4^2$ is expressible in terms of the $SO(2)$ invariants $h_2, j_2$ and $\ell_4$.  This is the syzygy mentioned earlier in the context of $SO(2)$ invariants.  Hence, consistent with Molien's formula, $h_2, j_2, \ell_4, m_4$ are an integrity basis with respect to $SO(2)$.  

With respect to $O(2)$ however, we conclude that $h_2, j_2$ and $\ell_4$, as they are independent, form an integrity basis.  

Here, although $m_4^2$ is polynomially dependent on $h_2, j_2, \ell_2$ in terms of the syzygy, $m_4$ is not expressible as a polynomial in terms of $h_2, j_2, \ell_4$.   Since $O(2)$ orbits may consist of two disconnected $SO(2)$ orbits, the role of the $SO(2)$ invariant $m_4$ is to distinguish which $SO(2)$ orbit $\Gamma\in \mathcal{T}_2$ would be on.

Finally, we comment that, in terms of 
\begin{align}
\label{n=2 Gamma *2}
\Gamma^{*2}= \left(\begin{array}{cc} a_1^2 + 2a_2^2 + a_3^2& a_0a_1 + a_2(2a_1+a_3)\\
a_0a_1 + a_2(2a_1+a_3)& a_0^2 + 2a_1^2 + a_2^2\end{array}\right),
\end{align}
the following polynomials also form an integrity basis with respect to $O(2)$:
\begin{align*}
\big\|\big({\rm Trace}\; \Gamma\big)\big\|^2&=j_2, \ \ 
{\rm Trace}\; \Gamma^{*2}=\frac{1}{16}\big(h_2+12j_2\big), {\rm \ \ and \ \ }\\
\det \Gamma^{*2}&= \frac{1}{1024}\big(h_2^2 + 8h_2j_2 + 80j_2^2 - 128\ell_4\big).
\end{align*}

\section{Membership problem for $O(n)$ invariant subsets of $\mathcal{T}_n$}
\label{membership section}
  We discuss a scheme to characterize $O(n)$ orbits of invariant subsets $\mathcal{S}$ of $\mathcal{T}_n$.
Recall that a subset $\mathcal{S}\subset \mathcal{T}_n$ is invariant if $\sigma\circ\Gamma\in \mathcal{S}$ for all $\sigma\in O(n)$ and $\Gamma\in \mathcal{S}$.

  Consider a subset of tensors $\mathcal{R}\subset \mathcal{T}_n$.   A useful way to obtain an $O(n)$ invariant subset $\mathcal{S}$, containing $\mathcal{R}$, is to form $\mathcal{S}=\{\sigma\circ R: R\in \mathcal{R}, \sigma\in O(n)\}=\cup_{R\in \mathcal{R}} \{O(n) \ {\rm orbit \ of \ }R\}$.  
 It will be useful to define the `stabilizer' subgroup $G_\RR$ in $O(n)$ which stabilizes $\mathcal{R}$:  
 That is, if $g\in G_\RR$, then $g\circ R \in \mathcal{R}$ for all $R\in \mathcal{R}$.

 The scheme requires the following ingredients:
  (1) a generating set $\mathcal{I}$ for the $O(n)$ invariants of $\mathcal{T}_n$, and
 (2) a generating set $\mathcal{J}$ for the $G_\RR$ invariants on $\mathcal{R}$.
 
Note that every $O(n)$ invariant in $\mathcal{I}$ is invariant for the action of $G_\RR$ on $\mathcal{R}$ because $G_\RR$ is a subgroup of $O(n)$.
This implies that every invariant in $\mathcal{I}$ can be written as polynomials in terms of the quantities in $\mathcal{J}$. If we can isolate the members of $\mathcal{J}$ in these relations, then we naturally get extensions of the $G_\RR$ invariants in $\mathcal{J}$ to all of $\mathcal{S}$ since they are now given in terms of the $O(n)$ invariants in $\mathcal{I}$ whose domain of definition necessarily contains $\mathcal{S}$. More generally, we will require that
 there is a set $\widetilde{\mathcal{J}}$ of $O(n)$ invariants on an invariant set $\widetilde{\mathcal{T}}$ containing $\mathcal{S}$ whose restriction to $\mathcal{R}$ gives the $G_\RR$ invariants $\mathcal{J}$.

 We comment that 
  $\widetilde{\mathcal{J}}$, although composed of $O(n)$ invariants, may not all be in polynomial form.
  In particular, $\widetilde{\mathcal{T}}\supset \mathcal{S}$ may be a strict subset of $\mathcal{T}_n$, say when the members of $\widetilde{\mathcal{J}}$ are rational invariants or other functions with singularities.

We now state a characterization of when $\Gamma\in \mathcal{T}_n$ belongs to the $O(n)$ invariant set $\mathcal{S}$ in terms of a `lifted' set of relations and a `solvability' condition with respect to the `small' set $\mathcal{R}$.  This characterization also will identify a member $R\in \mathcal{R}$ so that $\Gamma \sim R$.

\begin{thm}
\label{thm:meta}
Let $\mathcal{I}=\{I_i\}$ be a generating set of invariants for $O(n)$ orbits in $\mathcal{T}_n$.  Let $\mathcal{S} \subset \mathcal{T}_n$ be an invariant subset formed as the union of $O(n)$ orbits of $\mathcal{R}$, where $R\in \mathcal{R}$ is specified in terms of $r$ parameters $v_1,\ldots, v_{r}$. 
Let also $\mathcal{J}=\{q_i\}$ be a generating set of invariants with respect to $G_\RR$ action on $\mathcal{R}$, and suppose that an extension set $\widetilde {\mathcal{J}}=\{\widetilde q_i\}$ of $O(n)$ invariants exists in general, not on $\mathcal{T}_n$, but on $\widetilde{\mathcal{T}}$ such that $\mathcal{S}\subset\widetilde{\mathcal{T}}\subset \mathcal{T}_n$.

We will impose a `solvability' condition:  Suppose, for $\Gamma\in \widetilde{\mathcal{T}}$, there is an $R\in \mathcal{R}$ such that
\begin{align}
\label{R relations}
\widetilde q_i(\Gamma) = \widetilde q_i(R), 
{\rm \ for \ } i=1,\ldots, |\mathcal{J}|.
\end{align}

Since $\mathcal{I}$ restricted to $\mathcal{R}$ are $G_\RR$ invariants, and $\mathcal{J}$ is a generating set of such invariants, each member of $\mathcal{I}|_{\mathcal{R}}=\{I_i\}_{\mathcal{R}}$ can be written as a polynomial relation of $\mathcal{J}$.  That is, for polynomials $\{F_i\}$, 
we have on $\mathcal{R}$ that $I_i = F_i(\{q_j\})$ for $i=1,\ldots, |\mathcal{I}|$.  
These relations when lifted to $\widetilde{\mathcal{T}}$ 
prescribe another condition:
\begin{align}
\label{extended relations}
I_i = F_i(\{\widetilde q_j\}) \ {\rm for \ } i = 1,\ldots,  |\mathcal{I}|.
\end{align}

Finally, let $\Gamma\in \mathcal{T}_n$.  Then, we have that $\Gamma\in \mathcal{S}$ exactly when (1) $\Gamma\in \widetilde{\mathcal{T}}$ and the relations on $\Gamma$ in condition \eqref{extended relations} hold, and (2) the solvability condition \eqref{R relations} holds for $\Gamma$.
\end{thm}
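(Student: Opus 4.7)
The plan is to prove both directions of the biconditional by using the separating property of the $O(n)$ generating set $\mathcal{I}$ (Proposition \ref{prop:separating}) together with $O(n)$-invariance of the members of $\widetilde{\mathcal{J}}$. For the forward direction, I would suppose $\Gamma \in \mathcal{S}$, so that there exists $R \in \mathcal{R}$ and $\sigma \in O(n)$ with $\Gamma = \sigma \circ R$. Membership in $\mathcal{S} \subset \widetilde{\mathcal{T}}$ is automatic. Since each $\widetilde q_i$ is $O(n)$-invariant on $\widetilde{\mathcal{T}}$, we have $\widetilde q_i(\Gamma) = \widetilde q_i(\sigma \circ R) = \widetilde q_i(R)$, which yields the solvability condition \eqref{R relations} with this very $R$. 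For the lifted relations \eqref{extended relations}, I would use that on $\mathcal{R}$ the $\widetilde q_j$ agree with the $G_{\mathcal{R}}$ invariants $q_j$, so $I_i(R) = F_i(\{q_j(R)\}) = F_i(\{\widetilde q_j(R)\})$; then $O(n)$-invariance of $I_i$ and $\widetilde q_j$ propagates this identity from $R$ to $\Gamma$.

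For the reverse direction, I would assume (1) and (2) hold and aim to produce $R \in \mathcal{R}$ on the same $O(n)$ orbit as $\Gamma$. The solvability condition supplies a candidate $R$ with $\widetilde q_i(\Gamma) = \widetilde q_i(R)$ for all $i$. I would then compare $I_i(\Gamma)$ and $I_i(R)$: using the lifted relation \eqref{extended relations} for $\Gamma$ gives $I_i(\Gamma) = F_i(\{\widetilde q_j(\Gamma)\})$; on the other hand, since $R \in \mathcal{R}$ and $\widetilde q_j|_\mathcal{R} = q_j$, the original polynomial relation $I_i = F_i(\{q_j\})$ on $\mathcal{R}$ gives $I_i(R) = F_i(\{\widetilde q_j(R)\})$. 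The solvability equalities then force $I_i(\Gamma) = I_i(R)$ for every generator in $\mathcal{I}$. By Proposition \ref{prop:separating}, two tensors in $\mathcal{T}_n$ agreeing on a generating set of $O(n)$ invariants lie on the same $O(n)$ orbit, so $\Gamma \sim R$ and hence $\Gamma \in \mathcal{S}$.

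The argument is essentially bookkeeping once the ingredients are in place, so there is no deep obstacle, but the subtle point I would be careful about is the consistent use of the extension $\widetilde q_j$ versus the original $q_j$. The statement that the relations $I_i = F_i(\{q_j\})$ restricted to $\mathcal{R}$ admit coherent lifts $I_i = F_i(\{\widetilde q_j\})$ to all of $\widetilde{\mathcal{T}}$ is not automatic; it is built into the hypothesis via the compatibility $\widetilde q_j|_{\mathcal{R}} = q_j$, and then invoked on the specific $R$ produced by solvability. I would make sure to state clearly where this compatibility is used, namely in identifying $F_i(\{\widetilde q_j(R)\})$ with $F_i(\{q_j(R)\}) = I_i(R)$ so that the chain $I_i(\Gamma) = F_i(\{\widetilde q_j(\Gamma)\}) = F_i(\{\widetilde q_j(R)\}) = I_i(R)$ closes. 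Once these identifications are made explicit, invoking Proposition \ref{prop:separating} completes the proof.
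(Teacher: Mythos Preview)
Your proposal is correct and follows essentially the same argument as the paper's proof: both directions hinge on $O(n)$-invariance of the $\widetilde q_j$ and $I_i$, the compatibility $\widetilde q_j|_{\mathcal{R}} = q_j$, and the separating property of $\mathcal{I}$ from Proposition~\ref{prop:separating}. Your added emphasis on where the compatibility $\widetilde q_j|_{\mathcal{R}} = q_j$ is invoked to close the chain $I_i(\Gamma) = F_i(\{\widetilde q_j(\Gamma)\}) = F_i(\{\widetilde q_j(R)\}) = I_i(R)$ is a helpful clarification but does not depart from the paper's logic.
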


\begin{proof} First, given a tensor $\Gamma \in \mathcal{S}$, as $\Gamma$ is on an $O(n)$ orbit of a tensor $\hat R\in\mathcal{R}$, the values of the $O(n)$ invariants 
$\widetilde q_i(\Gamma) = \widetilde q_i(\hat R)$.
Hence, solvability \eqref{R relations} clearly holds with respect to this $\hat R$ and is therefore necessary.  Also, by $O(n)$ invariance $I_i(\Gamma)=I_i(\hat R)$ and by construction $I_i(\hat R) = F_i(\{q_j(\hat R)\})$ which equals $F_i(\{\widetilde q_j(\hat R)\})$.
Then, for this $\Gamma\in \mathcal{S}\subset \widetilde{\mathcal{T}}$, the
relations \eqref{extended relations} hold, and are therefore necessary.

On the other hand, suppose $\Gamma\in \widetilde{\mathcal{T}}$ satisfies \eqref{extended relations}, and that the solvability condition \eqref{R relations} holds.  Then, there is a tensor $\hat R \in \mathcal{R}$ such that $\widetilde q_i(\Gamma) = \widetilde q_i(\hat R)$
for $i=1,\ldots, |\mathcal{J}|$. 
By the relations \eqref{extended relations}, the values $I_i(\Gamma)=F_i(\{\widetilde q_j(\Gamma)\})$. 
By solvability \eqref{R relations}, since $\widetilde q_i(\Gamma) = \widetilde q_i(\hat R)$, we have that $I_i(\Gamma)=I_i(\hat R)$ for each member of $\mathcal{I}$.  
 Since $\mathcal{I}$ is a generating set with respect to $O(n)$ action on $\mathcal{T}_n$, it separates orbits by Proposition \ref{prop:separating}.  Therefore, the tensor $\Gamma$ must be on the same $O(n)$ orbit as $\hat R\in \mathcal{R}$, 
 and therefore $\Gamma$ belongs to $\mathcal{S}$.
 \end{proof}

\begin{rem} 
\label{rem:semi-algebraic}
\rm 
The solvability requirements implied by  \eqref{R relations} have to be obtained from additional considerations depending on the context in which this theorem is applied. These requirements can include equalities, for instance any syzygies among the generating set $\mathcal{J}$ of $G_\RR$ invariants, as well as inequalities, reflecting the fact that we want a real tensor $R$ as the solution to \eqref{R relations}.

In general, the set $\mathcal{S}$, given as a union of orbits, is not a variety, that is it cannot be characterized purely by a set of polynomial equations. 
The approach in Theorem \ref{thm:meta} has been to obtain an implicit description of $\mathcal{S}$ as a `semi-algebraic' set, that is a subset of a real vector space defined by a collection of polynomial equations and polynomial inequalities (cf. \cite{RAGBook}).
\end{rem}

\subsection{Solving for a transformation $\sigma$ to reach the canonical form in $\mathcal{R}$}
Although we have derived necessary and sufficient conditions for a tensor $\Gamma \in \mathcal{T}_n$ to be in $\mathcal{S} = \cup_{R\in \mathcal{R}}\big\{O(n){\rm \ orbit \ of \ }R\big\}$, the approach taken does not need to identify the transformation $\sigma\in O(n)$ so that $\sigma \circ \Gamma =R\in \mathcal{R}$.  
When $n=2$, by the direct and ODE methods (cf. Sections \ref{prop:2.2 section}, \ref{ODE section}), solving for $\sigma$ was part of the solution.

Here, we discuss a way to obtain $\sigma$ `generically' in the more abstract context of Theorem \ref{thm:meta}.  
We first state a standard `linear algebra' lemma, accomplished by orthogonal diagonalization.

\begin{lem}
Let $Q$ be an $n \times n$ symmetric, positive semi-definite matrix with distinct eigenvalues, and let $Q'$ be another $n \times n$ symmetric matrix with the same set of eigenvalues.  Then, there exist exactly $2^n$ distinct orthogonal matrices $\rho \in O(n)$ such that $\rho Q \rho^t = Q'$. 
\label{lem:ortho}
\end{lem}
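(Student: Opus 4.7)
The plan is to reduce the problem to a simultaneous diagonalization, exploiting the distinctness of the eigenvalues to pin down any orthogonal conjugation up to sign choices on each coordinate axis. First, I would apply the spectral theorem to write $Q = U D U^t$, where $D=\mathrm{diag}(\lambda_1,\ldots,\lambda_n)$ lists the common eigenvalues in some fixed order and $U\in O(n)$ is an orthogonal matrix of eigenvectors. Applying the same theorem to $Q'$, which by assumption has the same spectrum, produces $Q' = V D V^t$ for some $V\in O(n)$. The hypothesis that $Q$ is positive semi-definite is not essential for this step; what matters is simply the distinctness of the eigenvalues.

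Next, I would substitute these decompositions into the equation $\rho Q \rho^t = Q'$. Setting $W := V^t \rho\, U$, which is orthogonal since $\rho,U,V$ all are, the equation becomes $W D W^t = D$. Because $W$ is orthogonal, this is equivalent to $W D = D W$. Since $D$ is diagonal with pairwise distinct entries, a matrix commutes with $D$ if and only if it is diagonal, so $W$ must be diagonal.

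An orthogonal diagonal matrix has entries $\pm 1$, so $W = \mathrm{diag}(\epsilon_1,\ldots,\epsilon_n)$ with each $\epsilon_i\in\{-1,+1\}$. This yields exactly $2^n$ admissible choices for $W$, and from each such choice I recover
\begin{equation*}
\rho \;=\; V\, W\, U^t.
\end{equation*}
These $2^n$ matrices $\rho$ are all distinct because $U$ and $V$ are invertible, so distinct $W$'s produce distinct $\rho$'s. Conversely every solution $\rho$ arises this way, so there are exactly $2^n$ solutions.

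The only subtle point, which one should perhaps note explicitly, is that the intermediate matrices $U, V$ are not themselves unique — each column can be negated — but this ambiguity is absorbed into the freedom in $W$, so the final count of $\rho$'s is unambiguous. Beyond this bookkeeping, the argument is routine linear algebra, with no real technical obstacle.
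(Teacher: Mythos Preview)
Your proof is correct and follows essentially the same approach as the paper: both arguments diagonalize $Q$ and $Q'$ and reduce the problem to classifying orthogonal matrices commuting with a diagonal matrix with distinct entries. The paper phrases this eigenvector-by-eigenvector (showing $\rho u_i = \epsilon_i v_i$ and writing $\rho = \sum_i \epsilon_i v_i u_i^t$), whereas you package the same computation in matrix form via $W = V^t \rho\, U$; your $\rho = V W U^t$ is exactly the paper's formula with $U,V$ the matrices whose columns are the $u_i,v_i$.
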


\begin{proof}
Let $0 \leq \lambda_1 < \lambda_2 < \cdots < \lambda_n$ be the eigenvalues of $Q$ and $Q'$ and let $u_i, v_i$ be corresponding orthonormal eigenvectors satisfying $Q u_i= \lambda_i u_i, Q' v_i = \lambda_i v_i$. If $\rho Q\rho^t = Q'$, it follows that $Q' (\rho u_i) = \rho (Q u_i) = \lambda_i (\rho u_i)$ implying that $\rho u_i = \epsilon_i v_i$ for $i = 1,2,\ldots,n$. Since $\|u_i\| = \|v_i\| = 1$, we have that $\epsilon_i^2 = 1$. These relations give $\rho[u_1,\ldots, u_n] = [\epsilon_1v_1, \ldots, \epsilon_nv_n]$ or in other words $\rho = \sum_i \epsilon_i v_i u_i^t$. Conversely, for each of the $2^n$ choices of $\mathbf{\epsilon} = (\epsilon_1,\epsilon_2,\ldots,\epsilon_n) \in \{-1,1\}^n$, setting $\rho_{\mathbf \epsilon} = \sum_i \epsilon_i v_i u_i^t$ gives distinct orthogonal matrices $\rho_{\mathbf{\epsilon}}$ such that $\rho_{\mathbf \epsilon} Q \rho_{\mathbf \epsilon}^t = Q'$. 
\end{proof}

Let $\Ga_1 \sim \Ga_2$ and  let $Q$ be a symmetric matrix valued  covariant (cf.  Definition~\ref{vector cov defn}). An immediate application of the preceding lemma yields the following procedure to `generically' find a transformation $\sigma \in O(n)$ such that $\sigma \circ \Ga_1 = \Ga_2$.

\begin{thm} \label{rem:rotation} Let $\Ga_1,\Ga_2\in \mathcal{T}_n$ be two tensors on the same $O(n)$-orbit.  Suppose that $\sigma \circ \Ga_1 = \Ga_2$ for $\sigma\in O(n)$.
Let $Q$ be a symmetric matrix valued covariant. 
 If the `genericity' assumption that 
 the eigenvalues of $Q(\Ga_2)$ are distinct holds,  then $\sigma$ is one of the finite list of $2^n$ orthogonal maps $\rho$ where $\rho Q(\Ga_1)\rho^t =Q(\Ga_2)$.
 \end{thm}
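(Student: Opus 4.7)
The plan is to reduce the statement directly to Lemma \ref{lem:ortho} by translating the defining covariance property of $Q$ into a matrix conjugation identity, and then verifying that the eigenvalue hypothesis of Lemma \ref{lem:ortho} propagates from $Q(\Ga_2)$ to $Q(\Ga_1)$.

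First, I would unpack Definition \ref{vector cov defn}. Saying that $Q : \mathcal{T}_n \to M_{n\times n}^{\mathrm{sym}}$ is a covariant means that the quadratic form $q(\Ga,x) = x^t Q(\Ga) x$ is a covariant in the sense of the preceding definition, i.e.\ $q(\Ga,x) = q(\sigma\circ\Ga,\sigma x)$ for every $\sigma\in O(n)$. Writing this out gives
\begin{equation*}
x^t Q(\Ga)\,x \;=\; (\sigma x)^t Q(\sigma\circ\Ga)\,(\sigma x) \;=\; x^t \bigl(\sigma^t Q(\sigma\circ\Ga)\sigma\bigr)\,x
\end{equation*}
for every $x\in\R^n$. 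Since $Q(\Ga)$ and $\sigma^t Q(\sigma\circ\Ga)\sigma$ are both symmetric, equality of the quadratic forms forces equality of the matrices, so
\begin{equation*}
Q(\sigma\circ\Ga) \;=\; \sigma\, Q(\Ga)\,\sigma^t \qquad \text{for all } \sigma\in O(n),\; \Ga\in\mathcal{T}_n.
\end{equation*}

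Next, I would apply this identity with $\Ga=\Ga_1$ and the given $\sigma$, using $\sigma\circ\Ga_1=\Ga_2$, to get $Q(\Ga_2) = \sigma Q(\Ga_1)\sigma^t$. In particular $Q(\Ga_1)$ and $Q(\Ga_2)$ are orthogonally similar and hence share their spectrum, so under the genericity assumption $Q(\Ga_1)$ also has $n$ distinct eigenvalues. Lemma \ref{lem:ortho} then applies to the pair $(Q(\Ga_1), Q(\Ga_2))$ and produces exactly $2^n$ orthogonal matrices $\rho$ satisfying $\rho Q(\Ga_1)\rho^t = Q(\Ga_2)$; since $\sigma$ itself solves this equation, it is one of them, completing the argument.

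I do not anticipate a genuine obstacle: the only subtlety is the minor bookkeeping step of correctly deriving the conjugation identity from the quadratic-form version of covariance (one must invoke the symmetry of $Q$ to pass from equality of forms to equality of matrices). The genericity hypothesis that the spectrum of $Q(\Ga_2)$ be simple is exactly what is needed to avoid a continuous family of $\rho$'s arising from rotations within an eigenspace of dimension greater than one; this is why the word ``generically'' appears in the statement.
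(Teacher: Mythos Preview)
Your proposal is correct and follows essentially the same approach as the paper: derive the conjugation identity $Q(\sigma\circ\Ga_1)=\sigma Q(\Ga_1)\sigma^t$ from the quadratic-form covariance and then invoke Lemma \ref{lem:ortho}. Your version is slightly more careful in explicitly noting that symmetry of $Q$ is needed to pass from equality of quadratic forms to equality of matrices, and that the spectra of $Q(\Ga_1)$ and $Q(\Ga_2)$ coincide so the distinct-eigenvalue hypothesis of Lemma \ref{lem:ortho} is indeed met; the paper leaves these implicit.
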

 
 \begin{proof}
 Since $Q$ is covariant, we have that $x^t Q(\sigma\circ \Gamma_1)x = (\sigma^t x)^t Q(\Gamma_1)(\sigma^t x)$ (cf. Definition \ref{vector cov defn}).  Hence, unraveling the form, $Q(\Ga_2)=Q(\sigma\circ \Gamma_1) = \sigma Q(\Ga_1)\sigma^t$. 
 Then, by Lemma~\ref{lem:ortho}, it follows that a transformation $\sigma$ yielding $\sigma \circ \Ga_1 = \Ga_2$ can be found by exhaustively searching through a finite list of orthogonal transformations.  
\end{proof}

To apply the method outlined in the proof of Theorem \ref{rem:rotation}, we need to have at hand a symmetric matrix-valued covariant. Examples of such suitable covariants include ${\Ga^*}^2, u \cdot \Ga, {\mathcal{D}^*}^2$ etc.  In the following, we generically identify $\sigma\in O(n)$ when $\sigma\circ \Gamma$ is in either fully decoupled or partially decoupled form.

\begin{cor}
\label{cor:finding}
Consider the matrix covariant $\Gamma^{*2}$.  

(1) Suppose $\Gamma\sim \Gamma^{(\beta_1, \ldots, \beta_n)}$ is fully decoupleable.  When $\beta_1^2, \ldots, \beta_n^2$ are distinct, in other words the eigenvalues of $(\Gamma^{(\beta_1, \ldots, \beta_n)})^{*2}$ are distinct, by Theorem \ref{rem:rotation}, any map $\sigma\in O(n)$ such that $\sigma\circ \Ga = \Ga^{(\beta_1,\ldots, \beta_n)}$ is one of the $2^n$ transformations $\rho\in O(n)$ where $\rho^t \Gamma^{*2}\rho = (\Gamma^{(\beta_1,\ldots, \beta_n)})^{*2}$.

(2) Suppose $\Gamma\sim R\in \mathcal{R}$, where $R$ is in partially decoupled form \eqref{partial reduced}.   When $R^{*2}$ has distinct eigenvalues, by Theorem \ref{rem:rotation} again, any map $\sigma\in O(n)$ such that $\sigma\circ \Gamma = R$ is one of the $2^n$ maps $\rho\in O(n)$ where $\rho^t \Gamma^{*2}\rho = R^{*2}$.
\end{cor}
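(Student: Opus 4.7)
The plan is to apply Theorem \ref{rem:rotation} directly with the symmetric matrix-valued covariant $Q = \Gamma^{*2}$, which was shown to be $O(n)$-covariant in Section \ref{invariants section}. In particular, that discussion established $(\sigma\circ\Gamma)^{*2} = \sigma\,\Gamma^{*2}\sigma^t$, so $\Gamma^{*2}$ satisfies the hypothesis on $Q$ in Theorem \ref{rem:rotation}. The remaining task is therefore to translate the genericity hypothesis in Theorem \ref{rem:rotation}, phrased as distinctness of eigenvalues of $Q(\Gamma_2)$, into the concrete conditions stated in (1) and (2).

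For part (1), I would compute $\bigl(\Gamma^{(\beta_1,\ldots,\beta_n)}\bigr)^{*2}$ directly from the definition \eqref{Gamma *2}. Since $\Gamma^{(\beta_1,\ldots,\beta_n)}$ has entries $\Gamma^i_{j,k} = \beta_i$ when $i=j=k$ and zero otherwise, the sum
\begin{equation*}
\bigl(\Gamma^{(\beta_1,\ldots,\beta_n)}\bigr)^{*2}_{k,\ell} = \sum_{i,j} \Gamma^i_{j,k}\Gamma^i_{j,\ell}
\end{equation*}
is nonzero only when $i=j=k=\ell$, giving a diagonal matrix with diagonal entries $\beta_1^2, \ldots, \beta_n^2$. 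Hence its eigenvalues are exactly $\{\beta_i^2\}_{i=1}^n$, and distinctness of $\beta_1^2,\ldots,\beta_n^2$ is equivalent to distinctness of these eigenvalues. Applying Theorem \ref{rem:rotation} with $\Gamma_1 = \Gamma$, $\Gamma_2 = \Gamma^{(\beta_1,\ldots,\beta_n)}$, and $Q = \Gamma^{*2}$ then yields the claim: any $\sigma\in O(n)$ with $\sigma\circ\Gamma = \Gamma^{(\beta_1,\ldots,\beta_n)}$ must appear among the $2^n$ orthogonal matrices $\rho$ produced by Lemma \ref{lem:ortho} from the identity $\rho\,\Gamma^{*2}\,\rho^t = \bigl(\Gamma^{(\beta_1,\ldots,\beta_n)}\bigr)^{*2}$.

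For part (2), the argument is identical in structure. Given the partially decoupled reduced form $R$ in \eqref{partial reduced}, one observes that $R^{*2}$ is block-diagonal, with an upper-left $(n-1)\times (n-1)$ block coming from the $2\times 2\times 2$ subtensor piece (or more generally the upper-left $(n-1)\times(n-1)\times(n-1)$ block of $R$) and a $1\times 1$ block equal to $\beta_3^2$ (or $\beta_n^2$ in general) in the lower-right corner, reflecting that the $n$th coordinate has split off. Under the stated hypothesis that $R^{*2}$ has distinct eigenvalues, Theorem \ref{rem:rotation} applied with $\Gamma_1=\Gamma$, $\Gamma_2=R$, $Q=\Gamma^{*2}$ immediately gives the conclusion.

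No step is particularly delicate: the substance is all contained in Lemma \ref{lem:ortho} (orthogonal diagonalization with multiplicity-one spectrum) and the covariance of $\Gamma^{*2}$ already verified in Section \ref{invariants section}. The only obstruction to this scheme is the genericity hypothesis itself; when eigenvalues of $\Gamma^{*2}$ coincide, Lemma \ref{lem:ortho} fails and the finite list of $2^n$ candidates must be replaced by a continuous family parametrized by orthogonal transformations within the eigenspaces, which the corollary explicitly excludes.
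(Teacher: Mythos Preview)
Your proposal is correct and follows exactly the approach the paper intends: the corollary has no separate proof in the paper, since the statement itself already indicates that both parts follow by direct application of Theorem~\ref{rem:rotation} with the covariant $Q=\Gamma^{*2}$. Your added computation that $(\Gamma^{(\beta_1,\ldots,\beta_n)})^{*2}$ is diagonal with entries $\beta_i^2$ and your remark on the block structure of $R^{*2}$ are just unpacking what is implicit in the statement.
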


We comment that the condition that the eigenvalues are distinct is crucial to the preceding discussion and not merely a technical condition; see Example \ref{ex:not distinct}.  We also remark that to find $\sigma$ in `non-generic' settings of say repeated eigenvalues of a covariant matrix $Q$ seems to require more particular calculations in terms of the forms of $Q$ and $R$, which we leave for future consideration.

\begin{exa}
\label{ex:not distinct}\rm
 Consider the $2 \times 2 \times 2$ tensor $\Ga$ given by $f(x;\Ga) = x_1^3 - x_2^3$. Here, $\Ga$ is already in fully decoupled form and the matrix covariant $Q = {\Ga^*}^2 =\mathds{1}_{2\times 2}$ has repeated eigenvalues, $\beta_1^2=\beta_2^2=1$. 
Clearly, the orthogonal matrix $\displaystyle{\sigma = \frac{1}{\sqrt{2}} \begin{pmatrix} 1 & -1 \\ 1 & 1 \end{pmatrix}}$ is such that $Q(\sigma\circ \Gamma)=\sigma Q(\Ga) \sigma^t=\mathds{1}_{2\times 2}$. However, an explicit calculation yields $f(x; \sigma \circ \Ga) = f(\sigma^{-1} x; \Ga) = \frac{1}{\sqrt{2}} (3 x_1^2 x_2 + x_2^3)$, corresponding to $\sigma \circ \Ga$ not in a fully decoupled form.  Hence, we have a `violation' of Theorem \ref{rem:rotation} in that $Q(\eta\circ \Gamma)$ is diagonal for all $\eta\in O(2)$, but $\sigma\circ \Gamma$ is not in fully decoupled reduced form.  The `violation' arises because there is ambiguity in determining eigenvectors with respect to the repeated eigenvalue.
\end{exa}

  \section{Full decoupleability relations via symmetric polynomials for $n\geq 2$}
  \label{full decoup sym proof}
  
  We will make use of Theorem \ref{thm:meta} to characterize the subset of fully decoupleable tensors $\mathcal{S}$ in the space of trilinear $\mathcal{T}_n$ for $n\geq 2$.  Consider the reduced `diagonal' representation $R$ of a fully decoupleable tensor, where $R^i_{j,k}=0$ unless $i=j=k$ for $i=1,\ldots, n$.
  We will specify that $\mathcal{R}$ consists of these reduced representatives.
  Then, $\mathcal{S}= \cup_{R\in \mathcal{R}} \{O(n) \ {\rm orbit \ of \ }R\}$.

  On $\mathcal{R}$, let $G_\RR$ be the $O(n)$ action with respect to the $n$ diagonal entries.  In other words, $G_\RR = S_n\times (\Z_2)^n$ corresponding to permutation of the diagonal entries and also changing their signs.
  
 Consider the characteristic polynomial $\mathfrak{p}_\Gamma$ of $\Gamma^{*2}$ for $\Gamma\in \mathcal{T}_n$.  The $n$ coefficients of $\mathfrak{p}_\Gamma$ are symmetric functions of the eigenvalues of $\Gamma^{*2}$ which are real and nonnegative as $\Gamma^{*2}$ is symmetric and nonnegative definite.  Since the quadratic form of $\Gamma^{*2}$ is $O(n)$ covariant, these coefficients $\widetilde{\mathcal{J}}= \{\widetilde q_i\}_{i=1,\ldots, n}$ are invariants on $\mathcal{T}_n$. Here, $\widetilde{\mathcal{T}}=\mathcal{T}_n$. Restricted to $R\in \mathcal{R}$, the eigenvalues of $R^{*2}$ are the squares of the $n$ diagonal entries of $R$, and $\mathcal{J}:= \widetilde{\mathcal{J}}|_{\mathcal{R}}$ consist of the elementary symmetric functions $\{q_i\}_{i=1,\ldots, n}$ of these squares, which are invariant to the $G_\RR$ action.  
 
 Now, any $G_\RR$ invariant on $\mathcal{R}$, as it is invariant to permutation of diagonal elements and sign changes, must be a symmetric function of the squares of the diagonal elements.  Since $\mathcal{J}$ consist of the elementary symmetric functions, these generate all other $G_\RR$ polynomial invariants (cf. Chapter 1 in \cite{Sturmfels}).

  \begin{thm}
  \label{thm:full n}
  Given a generating set of invariants $\mathcal{I}=\{I_i: 1=1,\ldots, d_n\}$ of $O(n)$ orbits in $\mathcal{T}_n$, we may find $d_n$ polynomial conditions \eqref{full eq},
  which are necessary and sufficient to identify $O(n)$ orbits of fully decoupleable tensors in $\mathcal{T}_n$.
  \end{thm}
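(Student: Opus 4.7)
The plan is to apply Theorem \ref{thm:meta} directly with $\mathcal{S}=\mathcal{T}_{n,\rm FD}$, $\mathcal{R}$ the set of reduced diagonal tensors parametrized by $(\beta_1,\ldots,\beta_n)\in\R^n$, $\widetilde{\mathcal{T}}=\mathcal{T}_n$, and $\widetilde{\mathcal{J}}=\{\widetilde q_i\}_{i=1}^n$ the coefficients of the characteristic polynomial of $\Gamma^{*2}$, exactly as set up in the paragraphs immediately preceding the statement. The content is then the verification of the three hypotheses of the meta-theorem: that $\mathcal{J}:=\widetilde{\mathcal{J}}|_{\mathcal{R}}$ generates the $G_\RR$-invariants on $\mathcal{R}$, that $\mathcal{I}|_{\mathcal{R}}$ is expressible polynomially in $\mathcal{J}$, and that the solvability condition \eqref{R relations} holds for every $\Gamma\in\mathcal{T}_n$.

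The first two hypotheses are quickly dispatched. On $R\in\mathcal{R}$, $R^{*2}$ is the diagonal matrix with entries $\beta_1^2,\ldots,\beta_n^2$, so $\widetilde q_i|_{\mathcal{R}}$ is the $i$-th elementary symmetric polynomial $q_i$ in $\beta_1^2,\ldots,\beta_n^2$. Since $G_\RR=S_n\times(\Z_2)^n$ acts on $\mathcal{R}$ by permutation and independent sign-flipping of the $\beta_i$, its polynomial invariants are precisely the symmetric polynomials in the squares, which by the fundamental theorem of symmetric polynomials are generated by $q_1,\ldots,q_n$. Every $I_i\in\mathcal{I}$ then restricts to a $G_\RR$-invariant on $\mathcal{R}$ and hence equals some polynomial $F_i(q_1,\ldots,q_n)$.

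The decisive observation, which I expect to be the step requiring the real insight, is that the solvability condition holds unconditionally. Given any $\Gamma\in\mathcal{T}_n$, the matrix $\Gamma^{*2}$ is real symmetric and positive semidefinite, so its eigenvalues $\mu_1,\ldots,\mu_n$ are real and nonnegative; setting $\beta_i=\sqrt{\mu_i}$ yields $\hat R\in\mathcal{R}$ with $\hat R^{*2}=\mathrm{diag}(\mu_1,\ldots,\mu_n)$, whose characteristic polynomial agrees with that of $\Gamma^{*2}$ coefficient by coefficient, i.e.\ $\widetilde q_i(\Gamma)=\widetilde q_i(\hat R)$ for all $i$. Thus the fact that $\widetilde{\mathcal{J}}$ was chosen to be the coefficients of the characteristic polynomial of a positive semidefinite covariant matrix removes all inequality constraints from the membership criterion. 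Invoking Theorem \ref{thm:meta} then reduces membership of $\Gamma$ in $\mathcal{T}_{n,\rm FD}$ to the $d_n$ polynomial equalities
\begin{equation}
\label{full eq}
I_i(\Gamma) \;=\; F_i\bigl(\widetilde q_1(\Gamma),\ldots,\widetilde q_n(\Gamma)\bigr), \qquad i=1,\ldots,d_n,
\end{equation}
which is exactly the claim. The remaining obstacle is not in the proof itself but in making \eqref{full eq} explicit: this requires both an explicit integrity basis $\mathcal{I}$ and explicit formulas for the $F_i$, which is the reason the fully explicit version is carried out only for $n=3$, via the Olive--Auffray basis, rather than for general $n$.
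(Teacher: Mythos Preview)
Your proof is correct and follows essentially the same approach as the paper: you apply Theorem~\ref{thm:meta} with the same choices of $\mathcal{R}$, $G_\RR$, $\mathcal{J}$, and $\widetilde{\mathcal{J}}$, and your key observation---that solvability is automatic because $\Gamma^{*2}$ is positive semidefinite so one can take $\beta_i=\sqrt{\mu_i}$---is exactly the paper's argument. The structure and content match the paper's proof point by point.
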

  
  \begin{proof}
  We will apply Theorem \ref{thm:meta} in the current context.
 
  We first show that the solvability condition \eqref{R relations} always holds.  With respect to $\Gamma\in \mathcal{T}_n$, let $z_1,\ldots z_n$ be the real, nonnegative eigenvalues of $\Gamma^{*2}$. The tensor in fully decoupleable reduced form $R$ with respect to say diagonal elements $R^1_{1,1}=\sqrt{z_1},\ldots, R^n_{n,n}=\sqrt{z_n}$ is such that $R^{*2}$ has the same eigenvalues as $\Gamma^{*2}$. Therefore,
  $\widetilde q_i(\Gamma) = \widetilde q_i(R) = q_i(R)$ for $i=1, \ldots, n$, as desired.
  
  We now observe that $I_i$, when restricted to $\mathcal{R}$, is invariant to $G_\RR$ action.   Since $\mathcal{J}$ generates all polynomial $G_\RR$ invariants on $\mathcal{R}$, we may write $I_i=F_i(\mathcal{J})$ as a polynomial function of the elements of $\mathcal{J}$.
  
Hence, by Theorem \ref{thm:meta}, we conclude these relations, when lifted to $\mathcal{T}_n$, that is 
\begin{align}\label{full eq}
I_i = F_i(\widetilde{\mathcal{J}}) \ \ \ {\rm for\  }i=1,\ldots, d_n,
\end{align}
 are necessary and sufficient to characterize when $\Gamma\in \mathcal{T}_n$ belongs to an $O(n)$ orbit of $\mathcal{R}$, that is in $\mathcal{S}$.
\end{proof}

\begin{rem}
\label{rem:6.2}
\rm
When $n=2$, by the work in Section \ref{what are inv n=2}, we know that an $O(2)$ integrity basis consists of $I_1 = \|\big({\rm Trace} \; \Gamma\big)\|^2$, $I_2 = {\rm Trace}\; \Gamma^{*2}$ and $I_3=\det \Gamma^{*2}$.  Also, the coefficients of the characteristic polynomial of $\Gamma^{*2}$ are $\widetilde q_1 = I_2$ and $\widetilde q_2 = I_3$.  
By evaluating on reduced fully decoupleable forms, we observe $I_1=F_1(\{\widetilde q_j\})=\widetilde q_1$, $I_2=F_2(\{\widetilde q_j\})=\widetilde q_1$ and $I_3=F_3(\widetilde q_j\}) = \widetilde q_2$.  Then, according to Theorem \ref{thm:full n}, the necessary and sufficient relations reduce to
 $I_1 = I_1$, $I_2=I_1$ and $I_3=I_3$, or that the necessary and sufficient relation is $I_1=I_2$.  In terms of parameters, noting \eqref{n=2 Gamma *2}, we have
 \begin{align*}
 (a_1+a_3)^2 + (a_0+a_2)^2 = a_0^2 + 3a_1^2 + 3a_2^2 + a_3^2,
 \end{align*}
which reduces to the condition known already in Proposition \ref{prop:2.2}.
\end{rem}

\section{Explicit characterization of $O(n)$ orbits of fully and partially decoupled tensors when $n=3$}
\label{explicit n=3}

After specifying the Olive-Auffray integrity basis \eqref{OA basis} when $n=3$ in Section \ref{OA section}, we use it with respect to scheme of Theorem \ref{thm:meta} in the next two sections to develop more `explicit' criteria for membership, using the list of relations in the last section.

  \subsection{Olive and Auffray's integrity basis} 
  \label{OA section}

In 2014, Olive and Auffray \cite{OA} derived an integrity basis with respect to $O(3)$ action on real $3\times 3\times 3$ trilinear tensors $\Gamma$, in the context of elasticity problems, via connection of the $O(3)$ action on real trilinear tensors to that of $SL(2, \mathbb{C})$ action on bilinear forms over the complex vector space $\mathbb S_6\oplus \mathbb S_2$, where $\mathbb S_{2k}$ denotes the (complex) vector space of homogeneous polynomials in two complex variables of total degree $2k$.

Recall the discussion of decomposing $\Gamma = \frac{1}{n+2}\D+\mathcal{B}$ into the sum of a trace-free tensor $\D$ and a tensor $\mathcal{B}$ determined entirely by the  trace vector $u$ (near \eqref{u eqn}), with components by $u_i = \Gamma^j_{i,j} := \sum_{j=1}^3 \Gamma^j_{i,j}$.  
  In the context of $n=3$, 
Olive and Auffray construct two other vectors $v,w$ that are determined by $\Gamma$ and are covariant with respect to the $O(n)$ action, namely
\begin{align*}
v_m= \D^i_{j,k}\D^i_{j,\ell}\D^k_{\ell, m}, \qquad w_m = \D^i_{j,m}u_iu_j,
\end{align*}
using the Einstein summation convention.
Generically, that is when $\det [u, v, w]\neq 0$, the set $(u,v,w)$ is inearly independent, so that it specifies a canonical (covariant) frame for $\mathbb{R}^3$, determined entirely by $\Gamma$.

We now recall Theorem 2.2 in \cite{OA}, stated in our notation for general completely symmetric tensors.   
The integrity basis with respect to $O(3)$ identified in \cite{OA}, consisting of $d_3=13$ members, is the following, using the label $H$ for $I$ (since $I$ is reserved in Mathematica for an accompanying nb file) and again the Einstein summation convention:

\begin{equation}
\label{OA basis}
\begin{tabular}{lll}
$H_2 = \D^i_{j,k}\D^i_{j,k}$ & $J_2 = u_i^2$ & $H_4 = \D^i_{j,k}\D^i_{j,\ell}\D^p_{q, k}\D^p_{q, \ell}$ \\
$J_4 = \D^i_{j,k}u_k \D^\ell_{j,\ell}u_\ell$ & $K_4 = \D^i_{j,k}\D^i_{j,\ell}\D^k_{\ell,p}u_p$ & $L_4 = \D^i_{j,k}u_ku_j u_i$\\
$H_6 = v_i^2$ & $J_6 = \D^i_{j,k}\D^i_{j,\ell}u_k \D^\ell_{p,q}u_pu_q$ & $K_6 = v_kw_k$\\
$L_6 =\D^i_{j,k}\D^i_{j,\ell}u_k v_\ell$ & $M_6 = \D^i_{j,k}\D^p_{q,k} u_i u_j u_p u_q$
&$H_8 = \D^i_{j,k}\D^i_{j,\ell}u_k\D^p_{q, \ell}\D^p_{q, r}v_r$\\
$H_{10}= \D^i_{j,k}v_iv_jv_k$. &&
\end{tabular}
\end{equation}

We remark that the invariants are homogeneous polynomials in the entries of $\Ga$ with degrees corresponding to the subscripts.
Also, we comment that the expressions in \eqref{OA basis} are proportional but not exactly those in \cite{OA}.   In \cite{OA},
the formulas are those in \eqref{OA basis} with $\D' = \frac{1}{n+2}\D$ in place of $\D$.  We have used $\D$ here to avoid denominators.

\subsection{Fully decoupled tensors when $n=3$}

We will apply Theorem \ref{thm:meta} with respect to the explicit integrity basis \eqref{OA basis} when $n=3$ to identify several necessary and sufficient relations.
\begin{thm}
\label{thm:full explicit}
  When $n=3$, we have $13$ necessary and sufficient relations for $\Gamma\in \mathcal{T}_3$ to be fully decoupleable.  These are the following $13$ relations in terms of $\{\widetilde q_i=\widetilde q_i(\Ga)\}_{i=1, 2, 3}$ given in \eqref{wide tilde q}.
  \begin{align}
\label{full relations thm}
H_2(\Gamma)&=10 \widetilde q_1\\
H_4(\Ga)&=2 \left(22 \widetilde q_1^2-15 \widetilde q_2\right)\nonumber\\
J_2(\Ga)&=\widetilde q_1\nonumber\\L_4&=2 \left(\widetilde q_1^2-5 \widetilde q_2\right)\nonumber\\
H_6(\Ga)&=4 \left(16 \widetilde q_1^3-55 \widetilde q_1 \widetilde q_2+75
\widetilde q_3\right)\nonumber\\
H_{10}(\Ga)&=8 \left(128 \widetilde q_1^5-700 \widetilde q_1^3 \widetilde q_2+725 \widetilde q_1 \widetilde q_2^2+950 \widetilde q_1^2 \widetilde q_3-875 \widetilde q_2 \widetilde q_3\right)\nonumber\\
J_4(\Ga)&=2 \left(3 \widetilde q_1^2-5 \widetilde q_2\right)\nonumber\\
K_4(\Ga)&=4 \left(2
\widetilde q_1^2-5 \widetilde q_2\right)\nonumber\\
J_6(\Ga)&=12 \widetilde q_1^3-55 \widetilde q_1 \widetilde q_2+75 \widetilde q_3\nonumber\\
K_6(\Ga)&=2 \left(8 \widetilde q_1^3-35 \widetilde q_1 \widetilde q_2+75 \widetilde q_3\right)\nonumber\\
L_6(\Ga)&=6 \left(8 \widetilde q_1^3-25 \widetilde q_1 \widetilde q_2+25 \widetilde q_3\right)\nonumber\\
M_6(\Ga)&=4
\widetilde q_1^3-15 \widetilde q_1 \widetilde q_2+75 \widetilde q_3\nonumber\\
H_8(\Ga)&=4 \left(72 \widetilde q_1^4-270 \widetilde q_1^2 \widetilde q_2+75 \widetilde q_2^2+325 \widetilde q_1 \widetilde q_3\right).\nonumber
\end{align}
  \end{thm}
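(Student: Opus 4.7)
The plan is to apply the abstract framework of Theorem \ref{thm:meta} in exactly the same manner as in the proof of Theorem \ref{thm:full n}, but now making the lifted relations explicit by leveraging the Olive--Auffray integrity basis. So we take $\mathcal{I}$ to be the $13$ polynomials listed in \eqref{OA basis}, and $\mathcal{R}$ to be the set of reduced diagonal tensors, namely those $R \in \mathcal{T}_3$ with $R^i_{j,k} = 0$ unless $i = j = k$. The stabilizer subgroup $G_\RR \subset O(3)$ is $S_3 \ltimes (\Z_2)^3$, acting on $R$ by permuting and changing the signs of the diagonal entries $\beta_1, \beta_2, \beta_3 = R^1_{1,1}, R^2_{2,2}, R^3_{3,3}$. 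As in the proof of Theorem \ref{thm:full n}, the polynomials $G_\RR$-invariant on $\mathcal{R}$ are exactly the symmetric polynomials in $\beta_1^2, \beta_2^2, \beta_3^2$, so $\mathcal{J} = \{q_1, q_2, q_3\}$ may be taken to be the elementary symmetric polynomials in the squared diagonal entries. The extensions $\widetilde{\mathcal{J}} = \{\widetilde q_1, \widetilde q_2, \widetilde q_3\}$ are the coefficients of the characteristic polynomial of $\Ga^{*2}$; these are $O(3)$-invariant polynomials defined on all of $\widetilde{\mathcal{T}} = \mathcal{T}_3$.

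The solvability condition \eqref{R relations} is then automatic, as in Theorem \ref{thm:full n}: for any $\Gamma\in \mathcal{T}_3$, the eigenvalues $z_1,z_2,z_3$ of $\Ga^{*2}$ are nonnegative reals, and the diagonal tensor $R \in \mathcal{R}$ with entries $\sqrt{z_i}$ has $R^{*2}$ with the same eigenvalues $z_1, z_2, z_3$, hence $\widetilde q_i(\Ga) = q_i(R) = \widetilde q_i(R)$ for $i=1,2,3$. Once this is in place, Theorem \ref{thm:meta} reduces the proof to the concrete task of evaluating each of the $13$ Olive--Auffray invariants on a general $R \in \mathcal{R}$ and expressing the result as a polynomial $F_i(q_1,q_2,q_3)$; the $13$ relations \eqref{full relations thm} are then the lifts $I_i(\Ga) = F_i(\widetilde q_1(\Ga), \widetilde q_2(\Ga), \widetilde q_3(\Ga))$.

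To execute this evaluation, first note that for diagonal $R$ the trace vector is $u = (\beta_1,\beta_2,\beta_3)$. A direct substitution into the decomposition $\mathcal{B}^i_{j,k} = \tfrac{1}{n+2}(u_i\delta_{jk} + u_j\delta_{ik} + u_k\delta_{ij})$ with $n+2 = 5$ and $\D = 5(R - \mathcal{B})$ yields the simple closed form
\begin{equation*}
\D^i_{i,i} = 2\beta_i, \quad \D^i_{i,j} = \D^i_{j,i} = \D^j_{i,i} = -\beta_j \text{ for } i\neq j, \quad \D^i_{j,k} = 0 \text{ if } i,j,k \text{ all distinct},
\end{equation*}
and the auxiliary covariant vectors simplify correspondingly. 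In particular $v_m = \D^i_{j,k}\D^i_{j,\ell}\D^k_{\ell,m}$ and $w_m = \D^i_{j,m}u_iu_j$ each become explicit polynomials in $\beta_1,\beta_2,\beta_3$ whose dependence is symmetric in the three squared variables $\beta_i^2$ after contraction with the remaining $\D$'s and $u$'s. Plugging these expressions into each of $H_2, J_2, H_4, J_4, K_4, L_4, H_6, J_6, K_6, L_6, M_6, H_8, H_{10}$ yields symmetric polynomials in $\beta_1^2, \beta_2^2, \beta_3^2$, which the fundamental theorem of symmetric functions then rewrites uniquely as polynomials $F_i(q_1,q_2,q_3)$; reading off the coefficients produces the identities \eqref{full relations thm}.

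The main obstacle is not conceptual but computational: organizing the $13$ sums so that only index patterns contributing nonzero terms survive, and then identifying the resulting symmetric polynomial expressions with the specific polynomials $F_i$ in $\widetilde q_1,\widetilde q_2,\widetilde q_3$ appearing on the right-hand sides. A bookkeeping strategy is to group the summation indices of each invariant according to how many of the indices coincide (all equal, two equal, all distinct) and use the vanishing $\D^i_{j,k} = 0$ for all-distinct $(i,j,k)$ to drastically reduce the number of terms, and then verify the resulting formulas via a symbolic algebra computation referencing the accompanying Mathematica notebook. Once the thirteen identities are established on $\mathcal{R}$, Theorem \ref{thm:meta} immediately yields that the lifted relations \eqref{full relations thm} are necessary and sufficient for $\Ga \in \mathcal{T}_{3, \rm FD}$.
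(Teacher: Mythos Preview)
Your proposal is correct and follows essentially the same approach as the paper: you apply Theorem~\ref{thm:meta} with the identical choices of $\mathcal{R}$, $G_\RR$, $\mathcal{J}$, and $\widetilde{\mathcal{J}}$ as in Theorem~\ref{thm:full n}, verify solvability in the same way, and then evaluate the Olive--Auffray invariants on diagonal tensors. The only cosmetic difference is that the paper obtains the thirteen polynomial identities $I_i(R)=F_i(q_1,q_2,q_3)$ ``via a Groebner basis computation,'' whereas you outline a direct expansion of $\D$ on $\mathcal{R}$ (your formulas for $\D^i_{j,k}$ are correct) followed by symbolic verification; both routes amount to the same machine-assisted computation.
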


\begin{proof} 
Form $\widetilde {\mathcal{J}}=\{\widetilde q_i\}$, composed of the coefficients of the characteristic polynomial of $\Gamma^{*2}$.  Note that $\Gamma^{*2}$ is a symmetric, nonnegative definite matrix.  Write
\begin{align}
\label{wide tilde q}
\widetilde q_3(\Ga) &= \det \Gamma^{*2}=\tilde \beta_1^2\tilde \beta_2^2 \tilde \beta_3^2\\
\widetilde q_2(\Ga) &= \det A_{1,1} + \det A_{2,2} + \det A_{3,3} = \tilde \beta_1^2\tilde \beta_2^2 + \tilde\beta_1^2\tilde\beta_3^2 + \tilde\beta_2^2\tilde\beta_3^2\nonumber\\
  \widetilde q_1(\Ga) &= {\rm Trace}\;  \Gamma^{*2} = \tilde \beta_1^2 + \tilde \beta_2^2 + \tilde \beta_3^2, \nonumber
  \end{align}
   where $A_{i,i}$ are principal $2\times 2$ submatrices of $\Gamma^{*2}$ formed by omitting row $i$ and column $i$,  and $\tilde \beta^2_1, \tilde \beta^2_2, \tilde \beta^2_3$ are the real, nonnegative eigenvalues of $\Gamma^{*2}$.   These are $O(3)$ invariants on all of $\mathcal{T}_3$.
 
Let $\mathcal{R}$ be the set of tensors in fully decoupleable reduced form \eqref{full canonical} with diagonal entries.
On $\mathcal{R}$, the set $\widetilde{\mathcal{J}}$ reduces to $\mathcal{J}=\{q_i\}$, a generating set of invariants with respect to the symmetric group $S_3$ (Theorem 1.1.1 in \cite{Sturmfels}), playing the role of $G_\RR$:  For $R\in \mathcal{R}$ with diagonal entries given by $\{\beta_i\}$,
$$
\q(R) = \beta_1^2 \beta_2^2\beta_3^2, \ \ q_2(R) = \beta_1^2\beta_2^2 + \beta_1^2\beta_3^2 + \beta_2^2\beta_3^2, \ \ \r (R)= \beta_1^2 + \beta_2^2 + \beta_3^2.$$

Hence, solvability $\widetilde q_i(\Gamma) = q_i(R)$ for $i=1,2,3$ (cf. \eqref{R relations}) always holds by taking $R\in \mathcal{R}$ with parameters $\big\{\beta_i=\sqrt{\tilde \beta_i^2}\big\}$, in terms of the eigenvalues of $\Gamma^{*2}$.

Denote the integrity basis in \eqref{OA basis} by $\mathcal{I}$. Via a Groebner basis computation, 
the integrity basis values on a fully decoupled tensor in reduced form $R$ are found in terms of $\{q_i = q_i(R)\}$:
\begin{align}
\label{full relations alternate}
H_2(R)&=10 q_1\\
H_4(R)&=2 \left(22 q_1^2-15 q_2\right)\nonumber\\
J_2(R)&=q_1\nonumber\\L_4&=2 \left(q_1^2-5 q_2\right)\nonumber\\
H_6(R)&=4 \left(16 q_1^3-55 q_1 q_2+75
q_3\right)\nonumber\\
H_{10}(R)&=8 \left(128 q_1^5-700 q_1^3 q_2+725 q_1 q_2^2+950 q_1^2 q_3-875 q_2 q_3\right)\nonumber\\
J_4(R)&=2 \left(3 q_1^2-5 q_2\right)\nonumber\\
K_4(R)&=4 \left(2
q_1^2-5 q_2\right)\nonumber\\
J_6(R)&=12 q_1^3-55 q_1 q_2+75 q_3\nonumber\\
K_6(R)&=2 \left(8 q_1^3-35 q_1 q_2+75 q_3\right)\nonumber\\
L_6(R)&=6 \left(8 q_1^3-25 q_1 q_2+25 q_3\right)\nonumber\\
M_6(R)&=4
q_1^3-15 q_1 q_2+75 q_3\nonumber\\
H_8(R)&=4 \left(72 q_1^4-270 q_1^2 q_2+75 q_2^2+325 q_1 q_3\right).\nonumber
\end{align}
These lifted to $\Gamma\in \mathcal{T}_3$ as in \eqref{full relations thm}, replacing $q_i=q_(R)$ with $\widetilde q_i=\widetilde q_i(\Ga)$, give necessary and sufficient relations 
for a tensor $\Gamma\in \mathcal{T}_3$ to be fully decoupleable, as the solvability condition \eqref{R relations} always holds.  
\end{proof}

\subsection{Partially but not fully decoupleable tensors when $n=3$}
\label{explicit partial n=3}

Let $\mathcal{R}$ be the set of partially but not fully decoupleable tensors in reduced form (cf. \eqref{partial reduced}) in $\mathcal{T}_3$.  Such forms are given in terms of parameters $a_3, a_2, a_1, a_0$ and $\beta_3$.  
Recall that one of the three matrices, say $\Gamma^3$, is fixed to be a diagonal matrix with $\beta_3 = \Gamma^3_{3,3}$, $\Gamma^3_{2,2}=\Gamma^3_{1,1}=0$; of course, by an $O(3)$ transformation, the diagonal matrix could have been either $\Gamma^2$ or $\Gamma^1$. 
What is left in the description of $\Gamma$ is a $2 \times 2 \times 2$ subtensor, say $G^1, G^2$, which we may write in terms of $a_i$ for $i=0,1,2, 3$, as in the reduced form \eqref{partial reduced}.

A tensor in reduced form is partially but not fully decoupleable exactly when $a_2(a_2-a_0)\neq a_1(a_3-a_1)$ (cf. Lemma \ref{rem:3.2}). 
Let $\mathcal{S}=\cup_{R\in \mathcal{R}}\{O(3) \ {\rm orbit \ of \ } R\}$ be the collection of partially but not fully decoupled tensors $\mathcal{T}_{3, PD}\setminus \mathcal{T}_{3, FD} \subset \mathcal{T}_3$.

Define the stabilizer subgroup $G_\RR=O(2)\times \Z_2$ of $O(3)$, identifying members of $O(3)$ whose action on a reduced partially but not fully decoupleable tensor would remain in $\mathcal{R}$.  The $O(2)$ part refers to transformations of the $2\times 2\times 2$ subtensor given in terms of $a_3, a_2, a_1, a_0$ and $\Z_2$ refers to changing sign of the $\beta_3$ parameter.

We would like to identify a generating set $\mathcal{J}$ for the $G_\RR$ invariants on $\mathcal{R}$.  
Invoking Molien's formula, noting that the Molien function with respect to a direct product is a product of the Molien functions of the factors (cf. Lemma 2.2.3 in \cite{Sturmfels}), and \eqref{Molien O(2)}, we have 
$$\Phi_{O(2)\oplus \Z_2}(\lambda) = \frac{1}{1-\lambda^2}\Phi_{O(2)}(\lambda)=\frac{1}{(1-\lambda^2)^3(1-\lambda^4)}.$$
Hence, one should look for a generating set, or integrity basis, of three invariants of degree $2$ and one of degree $4$.

We now give a useful `canonical' form for a partially decoupled tensor $\Gamma\in \mathcal{T}_3$.  Recall the `reduced' form in \eqref{partial reduced}.  Let $G=\{G^1, G^2\}$ be the covariant $2\times 2\times 2$ subtensor, seen in the upper left of $\Gamma^1, \Gamma^2$, and let $\beta_3$ be the sole diagonal element in $\Gamma^3$.  

Let $u_G= (a_3 + a_1, a_2+a_0)$ be the trace vector of $G$, and $u^\perp_G= (a_2 + a_0, -a_3-a_1)$ be a perpendicular vector.  If $\|u_G\| \neq 0$, let $z=(z_1, z_2)$ where $z_1 = \frac{1}{\|u_G\|}(u_G\cdot x)$ and $z_2= \frac{1}{\|u_G\|}(u_G^\perp \cdot x)$.  Consider the rotation $\zeta =\zeta^{-1}=\|u_G\|^{-1}(u_G^t,(u_G^\perp)^t)\in O(2)$ which takes $x_1\to z_1$ and $x_2\to z_2$.  The subtensor $\zeta\circ G$ corresponds to mapping $f(\zeta^{-1}x;G) = f_1(z) + \frac{1}{4}f_3(z)$ as in Section \ref{what are inv n=2}.  Note from \eqref{f_1 eq} that $f_1(z) = \frac{3\|u_G\|}{4}z_1(z_1^2 + z_2^2)$ with respect to trilinear tensor $\E$ given by
$$
\E^1 = \frac{1}{4}\left(\begin{array}{cc} 3\|u_G\|&0\\0&\|u_G\|\end{array}\right), \ \ \E^2 =\frac{1}{4} \left(\begin{array}{cc} 0&\|u_G\| \\ \|u_G\|&0\end{array}\right).$$
The function $f_3(z)$ has trace $\frac{1}{6}\nabla \big(\Delta f_3\big)=0$ (recall that $\Delta$ is rotationally invariant), and corresponds to a trilinear tensor $\mathcal{D}$ where both $\mathcal{D}^1$, $\mathcal{D}^2$ have zero trace (cf. \eqref{D eq}).  Then, two parameters $4\gamma_1=\mathcal{D}^1_{2,2}$, $4\gamma_2=\mathcal{D}^2_{1, 1}$ determine $\mathcal{D}$.  Calling now $\alpha = \|u_G\|/4$, we have
  that the rotated subtensor $\zeta \circ G = \E + \frac{1}{4}\mathcal{D}$
 is in form
$$\left(\begin{array}{cc} 3 \alpha - \gamma_1& \gamma_2\\\gamma_2&\alpha+\gamma_1\end{array}\right), \ \ \left(\begin{array}{cc}\gamma_2&\alpha+\gamma_1\\\alpha+\gamma_1&-\gamma_2\end{array}\right).$$

If $\|u_G\|=0$, then ${\rm Trace}\; G^1 = {\rm Trace}\;  G^2 = 0$ so that $G$ is already in this form with $\alpha =0$.

Then, we say the `canonical' form for $\Gamma$, after putting it in reduced form $R$ and then rotating its $2\times 2\times 2$ subtensor to $\zeta\circ G$, is the following:
\begin{align}
\label{R tensor form}
\left(\begin{array}{ccc}3 \alpha-\gamma_1&\gamma_2&0\\
\gamma_2&\alpha+\gamma_1&0\\
0&0&0\end{array}\right), \ \left(\begin{array}{ccc}\gamma_2&\alpha+\gamma_1&0\\ \alpha+\gamma_1&-\gamma_2&0\\
0&0&0\end{array}\right), \ \left(\begin{array}{ccc}0&0&0\\0&0&0\\0&0&\beta_3\end{array}\right),
\end{align}
corresponding to
$$f(x;R) =( 3\alpha - \gamma_1)x_1^3 + 3\gamma_2x_1^2x_2 + 3(\alpha+\gamma_1)x_1x_2^2 -\gamma_2x_2^3 + \beta_3x_3^3.$$
We also calculate that 
\begin{align}
\label{R^2 form}
R^{*2} = \left(\begin{array}{ccc} 2(5\alpha^2-2\alpha\gamma_1 + \gamma_1^2+\gamma_2^2)& 4\alpha\gamma_2 & 0\\
4\alpha\gamma_2& 2\big((\alpha+\gamma_1)^2 + \gamma_2^2\big)& 0\\
0&0&\beta_3^2\end{array}\right).
\end{align}

The main reason we consider this type of canonical form for $\Gamma$ is that the later solvability condition \eqref{condition A} with respect to \eqref{PD inverting} allows lower degree and more succinct expression in Lemma \ref{lem:PD solvability} than if we employed the reduced form \eqref{partial reduced}.

It is clear that $\mathcal{J}=\{q_i\}$ composed of
\begin{align}
\label{PD inverting}
q_1 &= \beta_3^2\\
 q_2&=\|{\rm Trace} \; G\|^2=16\alpha^2\nonumber\\
 q_3 &= \det G^{*2} =  20\alpha^4 + 32\alpha^3\gamma_1+8\alpha^2\gamma_1^2+4\gamma_1^4 + 8\alpha^2\gamma_2^2+8\gamma_1^2\gamma_2^2+4\gamma_2^4\nonumber\\
 q_4&={\rm Trace}\; G^{*2}= 12\alpha^2+4\gamma_1^2+4\gamma_2^2\nonumber
 \end{align}
are $G_\RR$ invariants on $\mathcal{R}$ which form an integrity basis for the invariants of the $G_\RR$ action on $\mathcal{R}$ as they are independent (by a Groebner basis calculation) and fit the degree specifications. 
 
 Our task now is to extend these to invariants $\widetilde{\mathcal{J}}$ defined on an invariant set $\widetilde{\mathcal{T}}$ containing $\mathcal{S}$.  It not evident immediately how to extend the $\{q_i\}$.  However, let us try to rewrite them in turns of the $O(3)$ invariants $\mathcal{I}^+=\{H_2, H_4, J_2, L_4\}$ in the integrity basis $\mathcal{I}$ in \eqref{OA basis}.  We comment that this choice tries to match the degree structures in $\{q_i\}$, given there aren't $3$ invariants in $\mathcal{I}$ of degree $2$.  Also, we remark that not all choices of $4$ invariants in $\mathcal{I}$ would be amenable in the following calculations, because of syzygies among them.  However, the choice $\mathcal{I}^+$ will suffice here.
  
 When restricted to $\mathcal{R}$, these four invariants in $\mathcal{I}^+$ may be
 evaluated on canonical form tensors:
 \begin{align}
 \label{I^+ eq}
H_2&=10 q_1-15 q_2+25 q_4\\
H_4&=44 q_1^2-42 q_1 q_2+144 q_2^2-30 q_3+100 q_1 q_4-420 q_2 q_4+320 q_4^2\nonumber \\
J_2&=q_1+q_2\nonumber \\
L_4&=2 q_1^2-6
q_1 q_2+2 q_2^2-10 q_3-\frac{5}{2} q_2 q_4+\frac{5}{2} q_4^2. \nonumber
\end{align}
We denote $\mathcal{I}' =\big\{H_6, H_{10}, J_4, K_4, J_6, K_6, L_6, M_6, H_8\big\}$ as the remaining invariants in $\mathcal{I}$.

Let us now invert $\mathcal{I}^+$, and define 
 \begin{align}
 \label{eqn:partial solv}
\widetilde q_1 &= \frac{H_2^2 - 2H_4 - 3H_2J_2 + 6J_2^2 + 6L_4}{9(10 J_2 - H_2)}\\
\widetilde q_2&= J_2 - \widetilde q_1 \nonumber\\
\widetilde q_3&= \frac{1}{11250}\big(-8 H_2^2+25 H_4+60 H_2 J_2+1500 J_2^2-1200 L_4-11250 J_2 \widetilde q_1+11250 \widetilde q_1^2\big) \nonumber\\
\widetilde q_4&= \frac{1}{25}\big(H_2+ 15 J_2-25\widetilde q_1\big). \nonumber
\end{align}
We claim that $\widetilde{\mathcal{J}}=\{\widetilde q_i\}$ is a suitable extension of $\mathcal{J}$ of $O(3)$ invariants.  
Indeed, note that $10J_2-H_2 = 25(q_2-q_4)=100(\alpha^2-\gamma_1^2 -\gamma_2^2) \neq 0$
is the condition for $G^1, G^2$ not to be fully decoupleable (cf. Proposition \ref{prop:2.2} applied to the canonical form), and therefore $\Gamma$ to not be fully decoupleable (cf. Lemma \ref{rem:3.2}). 

Since the expressions for $\mathcal{J}=\{q_i\}$ are in terms of the integrity basis $\mathcal{I}$, they are $O(3)$ invariants where they are defined on $\mathcal{T}_3$.  Notice that the fraction in the equation for $q_1$ is well defined on the set $\mathcal{S}$ of partially but not fully decoupleable tensors in $\mathcal{T}_3$. Hence, $\widetilde{\mathcal{J}}$ is an extension of $\mathcal{J}$ to $O(3)$ invariants defined on the invariant set $\widetilde{\mathcal{T}} = \{\Gamma\in \mathcal{T}_3: 10J_2(\Gamma)\neq H_2(\Gamma)\}$ containing $\mathcal{S}$.

 To address solvability with respect to $\widetilde{\mathcal{J}}$, we prescribe the condition:
For a tensor $\Gamma\in \widetilde{\mathcal{T}}$, there is a canonical form tensor $R\in \mathcal{R}$ such that
\begin{align}
\label{condition A}
\widetilde q_i(\Gamma)=q_i(R),\ \ {\rm for \ }i=1,2,3,4.
\end{align}  

We rewrite this condition in terms of inequalities below in Lemma \ref{lem:PD solvability}.  We also show by Example \ref{exa:numerical} that the condition is needed, by specifying a tensor $\Gamma \in \widetilde{\mathcal{T}}$ which does not satisfy \eqref{condition A}.

\begin{thm}
\label{thm:partial n=3}
When $n=3$, there are $9$ explicit relations \eqref{PD relations} defined on $\widetilde{\mathcal{T}}=\{ \Ga  \in \mathcal{T}_3 :  H_2(\Ga) \neq 10 J_2(\Ga) \}$.
A tensor $\Gamma\in \mathcal{T}_3$ is partially but not fully decoupleable exactly when $\Gamma\in \widetilde{\mathcal{T}}$ and $\Gamma$ satisfies these relations, along with the solvability condition \eqref{condition A}, explicitly evaluated in Lemma \ref{lem:PD solvability}.
These relations on $\widetilde{\mathcal{T}}$ are as follows.
\begin{align}
\label{PD relations}
&18 J_4 =  -H_2^2+12 J_2 H_2-24 J_2^2+2 H_4+12 L_4, \\
  & 9 K_4 =  -2 H_2^2+15 J_2 H_2-66 J_2^2+4 H_4+6 L_4, \nonumber \\
   27 H_6& =  8100 \widetilde q_1^3-8100 J_2 \widetilde q_1^2-\left(7272 J_2^2-234 H_4+1512 L_4\right) \widetilde q_1-13 H_2^3+372 J_2^3\nonumber \\
   &-156 H_2 J_2^2  +26 H_2 H_4-7 H_2^2 J_2+146 H_4 J_2+30 H_2 L_4-924 J_2 L_4, \nonumber \\
   36 J_6& =  2700 \widetilde q_1^3-2700 J_2 \beta_3^4-\left(144 J_2^2-18 H_4+324 L_4\right) \widetilde q_1-H_2^3-72 J_2^3+12 H_2 J_2^2\nonumber \\
   &+2 H_2 H_4
   +6 H_4 J_2+12 H_2 L_4, \nonumber \\
   162 K_6& =  24300 \widetilde q_1^3-24300 J_2 \widetilde q_1^2-\left(8136 J_2^2-342 H_4+3456 L_4\right) \widetilde q_1-19 H_2^3-744 J_2^3\nonumber \\
   &+96 H_2 J_2^2 
     +38 H_2 H_4+14 H_2^2 J_2+86 H_4 J_2+48 H_2 L_4-744 J_2 L_4, \nonumber \\
   81 L_6& =  12150 \widetilde q_1^3-12150 J_2 \widetilde q_1^2-\left(7488 J_2^2-261 H_4+1998 L_4\right) \widetilde q_1-19 H_2^3-204 J_2^3\nonumber \\
   &-174 H_2 J_2^2 
     +38 H_2 H_4+23 H_2^2 J_2+149 H_4 J_2+48 H_2 L_4-852 J_2 L_4, \nonumber \\
   324 M_6& =  24300 \widetilde q_1^3-24300 J_2 \widetilde q_1^2+\left(5544 J_2^2-18 H_4-2376 L_4\right) \widetilde q_1+H_2^3-552 J_2^3\nonumber \\
   &+120 H_2 J_2^2 
     -2 H_2 H_4-14 H_2^2 J_2+22 H_4 J_2+6 H_2 L_4+420 J_2 L_4, \nonumber \\
   1458 H_8 & =  1895400 J_2 \widetilde q_1^3-\left(356400 J_2^2+40500 H_4-121500 L_4\right) \widetilde q_1^2 \nonumber \\
   &  +\left(158832 J_2^3+5796 H_4 J_2-206928 L_4 J_2\right) \widetilde q_1  +70 H_2^4+30792 J_2^4-8868 H_2 J_2^3\nonumber \\
   &+442 H_4^2 
     +830 H_2^2 J_2^2  -7216 H_4 J_2^2-11088 L_4^2-361 H_2^2 H_4+149 H_2^3 J_2\nonumber \\
     &+674 H_2 H_4 J_2 
     +30 H_2^2 L_4-3624 J_2^2 L_4+3828 H_4 L_4+408 H_2 J_2 L_4, \nonumber \\
   2187 H_{10}& =  \left(510300 H_4-5832000 J_2^2\right) \widetilde q_1^3-\left(4017600 J_2^3+251100 H_4 J_2+777600 L_4 J_2\right) \widetilde q_1^2 \nonumber \\
   &  -\big(9237600 J_2^4+95256 H_4 J_2^2+505440 L_4 J_2^2-10782 H_4^2+35640 L_4^2\nonumber \\
   &+71496 H_4 L_4\big) \widetilde q_1 
      +70 H_2^5-1226976 J_2^5+607632 H_2 J_2^4-159160 H_2^2 J_2^3\nonumber \\
      &+230888 H_4 J_2^3+1478 H_2 H_4^2 
    +110 H_2^3 J_2^2 +12212 H_2 H_4 J_2^2  +2448 H_2 L_4^2\nonumber \\
    &-35928 J_2 L_4^2-879 H_2^3 H_4-1161 H_2^4 J_2 
      +2506 H_4^2 J_2 -630 H_2^3 L_4  +2866 H_2^2 H_4 J_2 \nonumber \\
      &-423096 J_2^3 L_4 -125412 H_2 J_2^2 L_4+ 
     2040 H_2 H_4 L_4 +204 H_2^2 J_2 L_4-26160 H_4 J_2 L_4. \nonumber
 \end{align}
\end{thm}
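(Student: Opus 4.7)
The plan is to apply Theorem \ref{thm:meta} with $\mathcal{R}$ the set of canonical forms \eqref{R tensor form}, stabilizer $G_\RR = O(2) \times \Z_2$, and generating set $\mathcal{J} = \{q_1, q_2, q_3, q_4\}$ of $G_\RR$-invariants on $\mathcal{R}$ given in \eqref{PD inverting}; algebraic independence of the $q_i$ together with the Molien series $\Phi_{O(2)\times \Z_2}(\lambda)=\frac{1}{(1-\lambda^2)^3(1-\lambda^4)}$ already confirms $\mathcal{J}$ is an integrity basis on $\mathcal{R}$. To build the extension $\widetilde{\mathcal{J}}$ I would evaluate the four Olive--Auffray invariants $H_2, H_4, J_2, L_4$ on \eqref{R tensor form}, obtain the expressions \eqref{I^+ eq}, and algebraically invert to solve for $q_1, q_2, q_3, q_4$ in terms of $H_2, H_4, J_2, L_4$. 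The inversion introduces the denominator $10 J_2 - H_2$, which on $\mathcal{R}$ equals $25(q_2 - q_4) = 100(\alpha^2 - \gamma_1^2 - \gamma_2^2)$ and vanishes precisely when the $2 \times 2$ subtensor $G$ is fully decoupleable (by Proposition \ref{prop:2.2} applied to $G$ together with Lemma \ref{rem:3.2}); thus the formulas \eqref{eqn:partial solv} extend $\mathcal{J}$ to a family $\widetilde{\mathcal{J}}$ of rational $O(3)$-invariants defined on $\widetilde{\mathcal{T}} = \{\Ga \in \mathcal{T}_3 : H_2(\Ga) \neq 10 J_2(\Ga)\} \supset \mathcal{S}$.

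Next I would evaluate each of the remaining nine basis invariants $\mathcal{I}' = \{H_6, H_{10}, J_4, K_4, J_6, K_6, L_6, M_6, H_8\}$ on the canonical form \eqref{R tensor form} as polynomials in $(\alpha, \gamma_1, \gamma_2, \beta_3)$, and eliminate those four parameters against the defining equations for $q_1, q_2, q_3, q_4$ via a Gr\"obner basis computation in $\R[\alpha, \gamma_1, \gamma_2, \beta_3, q_1, q_2, q_3, q_4]$. Because each $I \in \mathcal{I}'$ restricted to $\mathcal{R}$ is $G_\RR$-invariant and $\mathcal{J}$ generates the $G_\RR$-invariants, the elimination is guaranteed to produce polynomial identities $I = F_I(q_1, q_2, q_3, q_4)$ on $\mathcal{R}$. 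Substituting $q_i \mapsto \widetilde q_i$ from \eqref{eqn:partial solv}, and exploiting the simpler relations $\widetilde q_2 = J_2 - \widetilde q_1$ and $\widetilde q_4 = (H_2 + 15 J_2 - 25 \widetilde q_1)/25$ to keep denominators bounded, would yield the nine relations \eqref{PD relations} holding identically on $\widetilde{\mathcal{T}}$.

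Necessity of both \eqref{PD relations} and the solvability condition \eqref{condition A} is then immediate from $O(3)$-invariance: if $\Ga \in \mathcal{S}$, then $\Ga = \sigma \circ \hat R$ for some $\hat R \in \mathcal{R}$, so $\widetilde q_i(\Ga) = q_i(\hat R)$ already solves \eqref{condition A}, and every identity $I_i(\hat R) = F_i(q(\hat R))$ transports to $I_i(\Ga) = F_i(\widetilde q(\Ga))$. Conversely, given $\Ga \in \widetilde{\mathcal{T}}$ that satisfies \eqref{PD relations} and \eqref{condition A}, the solvability yields $\hat R \in \mathcal{R}$ with $\widetilde q_i(\Ga) = q_i(\hat R)$, and combining \eqref{I^+ eq} with \eqref{PD relations} evaluated at this $\hat R$ shows $I_i(\Ga) = I_i(\hat R)$ for every member of the full basis $\mathcal{I}$. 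Since $\mathcal{I}$ separates $O(3)$-orbits by Proposition \ref{prop:separating}, we conclude $\Ga \sim \hat R$ and hence $\Ga \in \mathcal{S}$, which is exactly what Theorem \ref{thm:meta} requires.

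The main obstacle I anticipate is twofold. First, converting the abstract solvability requirement \eqref{condition A} into the explicit semi-algebraic description promised by Lemma \ref{lem:PD solvability} amounts to characterizing when four real numbers $\widetilde q_1, \widetilde q_2, \widetilde q_3, \widetilde q_4$ are realizable by real parameters $(\alpha, \gamma_1, \gamma_2, \beta_3)$ through \eqref{PD inverting}: the relations $q_1 = \beta_3^2$ and $q_2 = 16 \alpha^2$ force $\widetilde q_1, \widetilde q_2 \geq 0$, the pair $(q_3, q_4)$ is quadratic in $(\gamma_1^2, \gamma_2^2)$ and so imposes a discriminant-type inequality for realizability, and the `not fully decoupleable' constraint must be preserved as $\widetilde q_2 \neq \widetilde q_4$ (equivalently $H_2(\Ga) \neq 10 J_2(\Ga)$). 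Second, while the Gr\"obner basis reductions in the second step are entirely algorithmic, producing the precise coefficients appearing in \eqref{PD relations} for high-degree invariants such as $H_{10}$ involves voluminous symbolic computation that must be executed carefully, since the compact final form of \eqref{PD relations} relies on consistent substitutions of $\widetilde q_1$, $J_2$, $H_2$, $H_4$, $L_4$ throughout.
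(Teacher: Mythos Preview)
Your proposal is correct and follows essentially the same approach as the paper: you apply Theorem~\ref{thm:meta} with the canonical-form set $\mathcal{R}$, stabilizer $G_\RR=O(2)\times\Z_2$, generating set $\mathcal{J}$ from \eqref{PD inverting}, and its rational extension $\widetilde{\mathcal{J}}$ from \eqref{eqn:partial solv}, then express the nine remaining Olive--Auffray invariants in terms of $\mathcal{J}$ and lift. The paper's own proof is in fact terser than yours, since all of the setup you describe (the choice of $\mathcal{R}$, the Molien-series justification, the inversion \eqref{eqn:partial solv}, and the identification of $\widetilde{\mathcal{T}}$) is placed in the text preceding the theorem statement rather than in the proof itself.
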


  \begin{proof}We will apply Theorem \ref{thm:meta} and its scheme. 
  Since the $O(3)$ invariants $\mathcal{I}'$ on $\mathcal{T}_3$, when restricted to $\mathcal{R}$ are $G_\RR$ invariants on $\mathcal{R}$, we may express each of them in terms of the generating set $\mathcal{J}$, and so in terms of $\mathcal{I}^+$.  This procedure yields the
  $9$ relations listed above. 
  
  Hence, by Theorem \ref{thm:meta}, we conclude that a $\Gamma\in \mathcal{T}_3$ is partially but not fully decoupleable exactly when $\Gamma\in \widetilde{\mathcal{T}}$ and the $9$ relations \eqref{PD relations} hold, and
  the solvability condition \eqref{condition A} holds (which also identifies the remaining values of the integrity basis $\mathcal{I}^+$ on $\Gamma$ via \eqref{I^+ eq}). 
  \end{proof}

  We now address the solvability criterion \eqref{condition A}.
  \begin{lem}
  \label{lem:PD solvability}
  For $\Gamma\in \widetilde{\mathcal{T}}$, we have that \eqref{condition A} holds exactly when
  \begin{align*}
& \widetilde q_1(\Gamma)\geq 0, \ \ {\rm and \ \ }
  \widetilde q_2(\Gamma)\geq 0,
  \end{align*}
  and the following conditions depending on whether $\widetilde q_2(\Gamma) >0$ or $\widetilde q_2(\Gamma)=0$ hold:
  
  When  $\widetilde q_2(\Gamma)>0$, we must have
  \begin{align*}
&  -\widetilde q_2^4+4 \widetilde q_2^2 \widetilde q_3-16 \widetilde q_3^2+2 \widetilde q_2^3 \widetilde q_4-8 \widetilde q_2 \widetilde q_3 \widetilde q_4-2 \widetilde q_2^2 \widetilde q_4^2+8 \widetilde q_3 \widetilde q_4^2+2 \widetilde q_2 \widetilde q_4^3-\widetilde q_4^4 \geq 0.
  \end{align*}
  
  When $\widetilde q_2(\Gamma)=0$, we must have $\widetilde q_3, \widetilde q_4\geq 0$ and $\widetilde q_3 = (\widetilde q_4)^2/4$.
  \end{lem}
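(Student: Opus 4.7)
The plan is to unfold the solvability condition \eqref{condition A} into an explicit real-solvability problem in the canonical-form parameters $(\beta_3,\alpha,\gamma_1,\gamma_2)$ of \eqref{R tensor form}, and read off the inequalities characterizing when a real solution exists. The easy equations come first: since $q_1(R)=\beta_3^2$ and $q_2(R)=16\alpha^2$, the necessity and sufficiency of $\widetilde q_1(\Gamma)\geq 0$ and $\widetilde q_2(\Gamma)\geq 0$ is immediate, and assuming these I may choose $\beta_3,\alpha\geq 0$.

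Next, I would use the structural observation that $q_3(R)$ depends on $(\gamma_1,\gamma_2)$ only through $\gamma_1$ and $B:=\gamma_1^2+\gamma_2^2$; indeed, expanding \eqref{PD inverting} gives $q_3(R)=20\alpha^4+32\alpha^3\gamma_1+8\alpha^2 B+4B^2$. The equation $\widetilde q_4=q_4(R)$ pins down $B=(4\widetilde q_4-3\widetilde q_2)/16$, and substituting into the $q_3$ equation yields a single linear equation in $\gamma_1$ with coefficient $32\alpha^3$. In the generic case $\widetilde q_2>0$ (so $\alpha>0$), this uniquely determines $\gamma_1$, and the existence of a real $\gamma_2$ with $\gamma_2^2=B-\gamma_1^2$ is equivalent to $\gamma_1^2\leq B$. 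Clearing denominators and expanding yields, after a direct computation, the inequality $\widetilde q_2^3(4\widetilde q_4-3\widetilde q_2)-P^2\geq 0$ with $P=8\widetilde q_3-\widetilde q_2^2+2\widetilde q_2\widetilde q_4-2\widetilde q_4^2$; this expression equals four times the quartic displayed in the lemma. Because this inequality forces the product $\widetilde q_2^3(4\widetilde q_4-3\widetilde q_2)$ to be nonnegative while $\widetilde q_2>0$, it automatically encodes $4\widetilde q_4\geq 3\widetilde q_2$, so the condition $B\geq 0$ need not be imposed separately.

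In the degenerate case $\widetilde q_2=0$ (so $\alpha=0$), the $q_3$ and $q_4$ equations collapse to $\widetilde q_3=4B^2$ and $\widetilde q_4=4B$; real solvability is then equivalent to $\widetilde q_4\geq 0$ together with $4\widetilde q_3=\widetilde q_4^2$, and the nonnegativity $\widetilde q_3\geq 0$ follows automatically. This matches exactly the degenerate-case conditions stated in the lemma. The only real obstacle is the algebraic bookkeeping that recasts \mbox{`$\gamma_1^2\leq B$'} into the explicit quartic in the lemma, which I would verify by expanding $P^2$ and comparing coefficients term by term; beyond this identification, the argument is a direct case analysis on whether $\widetilde q_2$ vanishes.
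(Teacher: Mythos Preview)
Your proposal is correct and follows essentially the same approach as the paper: solve the system \eqref{PD inverting} for $(\beta_3,\alpha,\gamma_1,\gamma_2)$ and read off the real-solvability constraints, splitting on whether $\widetilde q_2$ vanishes. The only cosmetic difference is that you carry out the elimination by hand via the auxiliary quantity $B=\gamma_1^2+\gamma_2^2$ (which makes the $q_3$-equation visibly linear in $\gamma_1$), whereas the paper invokes a Groebner basis to arrive at the identical expression for $\gamma_2^2$ and the same formula $\gamma_1=\alpha P/\widetilde q_2^{\,2}$.
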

  
  \begin{proof}
  Consider the equations \eqref{PD inverting}, with $\{\tilde q_i\}$ in place of $\{q_i\}$.  The first equation imposes that $\widetilde q_1\geq 0$ in order to be able to take a square root to define $\beta_3$.
   The second equation then imposes that $\widetilde q_2 = 16 \alpha^2\geq 0$.
  
 \noindent {\it Case 1.}  Suppose that $\widetilde q_2>0$.   A Groebner basis calculation, eliminating $\alpha$ and $\gamma_1$, allows to obtain a linear equation for $\gamma_2^2$:
 $$ \widetilde q_2^4-4 \widetilde q_2^2 \widetilde q_3+16 \widetilde q_3^2-2 \widetilde q_2^3 \widetilde q_4+8 \widetilde q_2 \widetilde q_3 \widetilde q_4+2 \widetilde q_2^2 \widetilde q_4^2-8 \widetilde q_3 \widetilde q_4^2-2 \widetilde q_2 \widetilde q_4^3+\widetilde q_4^4+4 \widetilde q_2^3 \gamma _2^2=0.$$
 Hence, we obtain the fraction
 \begin{equation}
 \label{Solv num}
 \gamma_2^2 = \frac{1}{4\widetilde q_2^3}\big(-\widetilde q_2^4+4 \widetilde q_2^2 \widetilde q_3-16 \widetilde q_3^2+2 \widetilde q_2^3 \widetilde q_4-8 \widetilde q_2 \widetilde q_3 \widetilde q_4-2 \widetilde q_2^2 \widetilde q_4^2+8 \widetilde q_3 \widetilde q_4^2+2 \widetilde q_2 \widetilde q_4^3-\widetilde q_4^4\big),\end{equation}
  which must be nonnegative to be able to take a square root to define $\gamma_2$.  This means that  
  the numerator in \eqref{Solv num} must be nonnegative.  Moreover, we may solve for $\alpha$ now by taking the square root of $\widetilde q_2$.
  Finally, we may solve, without further conditions, for
  $$\gamma_1 = -\frac{\alpha  \left(\widetilde q_2^2-8 \widetilde q_3-2 \widetilde q_2 \widetilde q_4+2 \widetilde q_4^2\right)}{\widetilde q_2^2}.$$

  \noindent {\it Case 2.} Suppose $\widetilde q_2 = 0$.  From \eqref{PD inverting}, this means $\alpha = 0$.  Moreover, we must have $\widetilde q_4 = 4(\gamma_1^2 + \gamma^2_2)\geq 0$ and $\widetilde q_3 = 4(\gamma_1^2 + \gamma_2^2)^2\geq 0$ to be able to define $\gamma_1^2 + \gamma_2^2$.  In other words, we must have $\widetilde q_3=(\widetilde q_4)^2/4\geq 0$. 
  When $\widetilde q_3 = \widetilde q_4 = 0$, we have $\gamma_1 = \gamma_2=0$.  However, when $\widetilde q_4>0$ and so necessarily $\widetilde q_3>0$, although the sum $\gamma_1^2 + \gamma_2^2$ is determined, there is a continuum of choices $\gamma_1, \gamma_2$ which would satisfy the relation $\gamma_1^2 + \gamma_2^2 = (\widetilde q_4)/4$.

  Hence, taking the two cases into account, the conditions in the lemma are necessary, and also sufficient.
  \end{proof}
  
  We now give the example demonstrating that the solvability condition \eqref{condition A} is nontrivial on $\mathcal{T}_3$.
 \begin{exa}
\label{exa:numerical}
Let $\Gamma$ be the tensor corresponding to the cubic polynomial $f = 2 x_1^3+3 x_1^2 x_2+3 x_2^3-12 x_1 x_2 x_3+6 x_3^3$.

We can compute values of the integrity basis to get $H_2 = 1060, H_4 = 518384, J_2 = 56, L_4= -4528$. Since $H_2 \neq 10 J_2$, the quantities $\widetilde{q}_i$ for $i = 1,2, 3, 4$ are well defined. In particular $\widetilde{q}_1 = - 332/15$ so there is no real solution for the parameter $\beta_3$ which is given by $\beta_3^2 = \widetilde{q}_1$.
\end{exa}

\paragraph{{\bf Acknowledgements.}} We thank Jacopo De Nardis and Ali Zahra for pointing out difficulties with Proposition 8.1 in \cite{BFS} on characterization of when $\Gamma$ is partially decoupleable when $n=2$; in this regard, Lemma \ref{lem:2.1} and Proposition \ref{prop:2.2} correct its statement in \cite{BFS}.

We also thank Kirti Joshi and Klaus Lux for helpful discussions. 

\paragraph{{\bf Funding.}}
T.F. was supported by International Scientists Project of BJNSF \#IS23007.  S.S. was supported in part by a Simons Sabbatical grant.  S.V. was supported in part by NSF grant DMS-2108124.
 
 \paragraph{{\bf Conflicts of Interest.}} There are no competing financial or other interests with respect to this work.
 
 \paragraph{{\bf Data Availability Statement.}}
 The Mathematica code supporting calculations in this study are openly available in the public repository \href{https://github.com/shankar-cv/KPZ-Decouple}{https://github.com/shankar-cv/KPZ-Decouple}.

\end{document}